\newcommand{\bx}{\mathbf{x}}
\newcommand{\bz}{\mathbf{z}}
\newcommand{\bc}{\mathbf{c}}
\newcommand{\by}{\mathbf{y}}
\newcommand{\bu}{\mathbf{u}}
\newcommand{\bv}{\mathbf{v}}
\newcommand{\bdf}{\mathbf{f}}
\newcommand{\rd}{\mathrm{d}}
\newcommand{\bL}{\mathfrak{L}}
\newcommand{\mG}{\mathcal{G}}
\newcommand{\mF}{\mathcal{F}}
\newcommand{\mN}{\mathcal{N}}
\newcommand{\mR}{\mathcal{R}}
\newcommand{\mM}{\mathcal{M}}
\newcommand{\mT}{\mathcal{T}}
\newcommand{\mX}{\mathcal{X}}
\newcommand{\mL}{\mathcal{L}}
\newcommand{\bR}{\mathbb{R}}
\newcommand{\bB}{\mathbf{B}}
\newcommand{\bD}{\mathbf{D}}
\newcommand{\bdE}{\mathbf{E}}
\newtheorem{theorem}{Theorem}
\newtheorem{lemma}{Lemma}
\newtheorem{definition}{Definition}
\newtheorem{assumption}{Assumption}
\newcommand{\wx}[1]{\textcolor{blue}{\textbf{[Wuzhe: #1]}}}
\newcommand{\rj}[1]{\textcolor{orange}{\textbf{[RJ: #1]}}}
\title{Self-Supervised Amortized Neural Operators for Optimal Control: Scaling Laws and Applications}
\author{Wuzhe Xu\thanks{W. Xu (xu2224@purdue.edu) is with the department of mathematics, Purdue University.}, \qquad Jiequn Han\thanks{J. Han (jhan@flatironinstitute.org) is with the Center for Computational Mathematics, Flatiron Institute.}, \qquad Rongjie Lai\thanks{R. Lai (lairj@purdue.edu) is with the department of mathematics, Purdue University.} }
\date{May 2025}
\begin{document}

\maketitle
\begin{abstract}
Optimal control provides a principled framework for transforming dynamical system models into intelligent decision-making, yet classical computational approaches are often too expensive for real-time deployment in dynamic or uncertain environments. In this work, we propose a method based on self-supervised neural operators for \emph{open-loop} optimal control problems. It offers a new paradigm by directly approximating the mapping from system conditions to optimal control strategies, enabling instantaneous inference across diverse scenarios once trained. We further extend this framework to more complex settings, including dynamic or partially observed environments, by integrating the learned solution operator with Model Predictive Control (MPC). This yields a solution-operator learning method for \emph{closed-loop} control, in which the learned operator supplies rapid predictions that replace the potentially time-consuming optimization step in conventional MPC.  
This acceleration comes with a quantifiable \emph{price to pay}. Theoretically, we derive scaling laws that relate generalization error and sample/model complexity to the intrinsic dimension of the problem and the regularity of the optimal control function.
Numerically, case studies show efficient, accurate real-time performance in low–intrinsic-dimension regimes, while accuracy degrades as problem complexity increases.
Together, these results provide a balanced perspective: neural operators are a powerful novel tool for high-performance control when hidden low-dimensional structure can be exploited, yet they remain fundamentally constrained by the intrinsic dimensional complexity in more challenging settings. 
\end{abstract}

\section{Introduction}

Optimal control offers a unifying mathematical framework for transforming knowledge of dynamical systems into principled decision-making. It underlies some of the most consequential problems in science and engineering, with applications ranging from robotics \cite{massaro2023optimal, song2023reaching} and aerospace \cite{betts2010practical} to finance \cite{caldentey2006optimal, cornuejols2018optimization} and drug delivery \cite{chakrabarty2005optimal, tassa2014control}. Despite its central role, solving optimal control problems remains computationally demanding. Classical methods, such as open-loop approaches based on trajectory optimization or Pontryagin’s Maximum Principle \cite{betts2010practical,bryson2018applied}, closed-loop feedback schemes derived from the Hamilton–Jacobi–Bellman (HJB) equation \cite{bellman1957dynamic, bertsekas2012dynamic}, and direct methods \cite{garg2010unified, kelly2017introduction} that convert the problem into parametric optimization problems, as well as more recent deep learning approaches~\cite{han2016deep,nakamura2021adaptive,barry2025physics, mowlavi2023optimal}, are typically tailored to individual instances and must be recomputed when system parameters or environments change. This heavy computational burden poses a major obstacle for real-time control in dynamic and uncertain settings.

Neural operators \cite{li2020fourier,lu2021learning} offer a new opportunity to address this challenge by learning mappings between function spaces, enabling zero-shot inference on new instances without per-instance retraining. In optimal control problems, they have been used to build dynamics surrogates \cite{wang2021fast}, enforce optimality via adjoint or HJB formulations \cite{yong2024deep,lee2025hamilton}, and synthesize stabilizing controllers through backstepping gain
kernels \cite{bhan2023neural}. Some of these approaches rely on supervised training with ground-truth optimal policies, which requires expensive data generation from existing solvers, while others avoid this cost by introducing auxiliary surrogate dynamics or reformulated optimality systems. In contrast, our approach avoids both ground-truth datasets and surrogate reformulations: we learn the mapping from environment functions directly to the optimal open-loop control in the original problem formulation, using self-supervised training to minimize the cost functional at the model’s predictions~\cite{kunisch2023learning,zhao2024offline}. This direct formulation can also be viewed as a form of amortized optimization~\cite{amos2023tutorial}, where a model learns to predict solutions across families of related optimization problems.



Once trained, neural operators deliver efficient inference of \emph{open-loop} optimal control across different problem instances for high-performance real-time deployment. However, the prediction accuracy, characterized by generalization error, still depends on the problem’s intrinsic dimension and regularity, and this dependence is fundamental. A broad line of work shows that many high-dimensional applications admit hidden low-intrinsic-dimensional structure  \cite{osher2017low,djuric2015hate,tenenbaum2000global} and neural operators can exploit such structure \cite{chen2023deep,kovachki2021universal,lanthaler2022error} and therefore perform well in these regimes. When such low-intrinsic-dimensional structure is limited or absent, however, neural operator learning encounters the same barriers as classical numerical methods. In our setting, the \emph{price to pay} for learning a neural operator is its dependence on the intrinsic dimensionality of the control problem, reflecting a new form of Bellman’s curse of dimensionality~\cite{bellman1957dynamic}. To quantify this fundamental cost of learning, we provide both theoretical and practical insights. On the theoretical side, we establish scaling laws that quantify the sample and model complexity required to approximate the optimal control operators under varying intrinsic dimensions. On the practical side, we explore numerical case studies that highlight both the remarkable efficiency neural operators can deliver in structured regimes, and the limitations that persist in more complex environments. Taken together, our results offer a balanced perspective: neural operators represent a powerful and versatile tool for optimal control, capable of transforming how control is computed and deployed, but their benefits come with structural requirements that fundamentally shape their scope of applicability. By clarifying both the potential and the limits, we aim to shift the discussion from promise alone to a grounded understanding of what neural operators can and cannot achieve for optimal control, and more generally in other applications of operator learning.





\subsection{Problem Setup} 
We start with an \emph{open-loop} optimal control problem
\begin{equation}\label{eqn:Loss}
     \bu^*(t; B, \bx_0) \in  \arg \min_{\bu(t)} J_B(\bx(t), \mathbf{u}(t)) = \int_0^T L_B(t, \bx(t), \bu(t)) \rd t +  M(\bx(T))
\end{equation}
subject to the system dynamics
\begin{equation}\label{eqn:dynamics}
\left\{
\begin{aligned}
    \dot{\mathbf{x}}(t) &= \bdf(t, \mathbf{x}(t), \mathbf{u}(t)), \quad t \in [0, T] \\
    \mathbf{x}(0) &= \mathbf{x}_0 ,
\end{aligned}
\right.
\end{equation}
where $\bx_0 \in \Omega := \mathbb{R}^D$ is the initial state, $\bu(t) \in \mathcal{S} = \mathbb{R}^q$ is the control function lies in the admissible set $\mathcal{S}$, and $\bdf$ describes the dynamics.
$L_{B}(t, \bx(t), \bu(t))$ and $M(\bx(t))$ denotes the running cost and terminal cost, respectively. To model variations across problem instances, we introduce an \emph{environment function} $B \in C([0, T] \times \Omega)$ that characterizes the external conditions or task-specific factors of the control problem. For notational simplicity, we let $B$ enter only through the running cost $L_B$; allowing $B$ to also affect the terminal cost or dynamics introduces no conceptual difficulty and can be handled analogously.


As the main body of the paper focuses on the above \emph{open-loop} optimal control problem, unless otherwise stated, we use the term “optimal control” to refer specifically to this \emph{open-loop} formulation, which encompasses a broad class of real-world problems, ranging from robotics and autonomous driving to energy systems, biomedical applications, and finance. For instance, in navigation tasks, the environment function $B$ encodes obstacle configurations, and the running cost often combines an energy term with an environment-dependent penalty,
\[
L_B(t, \bx(t), \bu(t)) = \ell(\bx(t),\bu(t)) +  B(t, \bx(t)).
\]
where $\ell(\bx, \bu)$ typically represents an energy function, such as kinetic energy. In this paper, we illustrate such formulations through three representative examples (see Section~\ref{sec:numerics}). 
Similar formulations appear in optimal drug dosing~\cite{tassa2014control}, where $B$ encodes patient-specific factors such as metabolism rates, sensitivity thresholds, or allowable toxicity levels; in energy systems~\cite{salas2018benchmarking}, where $B$ reflects time-varying demand or renewable availability; and in finance~\cite{ cornuejols2018optimization,han2021recurrent}, where $B$ captures market conditions. 

Consider the solution operator $\mG$ that maps a problem instance tuple $(B, \bx_0)$ to the corresponding optimal control function:
\begin{eqnarray}
\label{eqn:solutionoperator}
  \mG : C([0,T]\times\Omega) \times \Omega 
    \; &\longrightarrow\; C([0,T],\mathcal{S}) \nonumber \\
    (B, \bx_0) & \mapsto \bu^*(t; B, \bx_0)
\end{eqnarray}
The major goal of this paper is to parametrize $\mG$ by a neural operator $\mG_\theta$. A general bottleneck for neural operator approach is the scarcity of reliable training data. Here we assume that the cost functional $J_B$ and dynamics function $\bdf$ are known, and we develop a self-supervised, amortized training method. Specifically, given a neural operator $\mG_\theta$, its output (the predicted control function) can be propagated through the dynamics using a Neural ODE solver to generate the corresponding trajectory. By evaluating this predicted trajectory within the cost functional and minimizing the resulting objective, the model can then be trained in self-supervised manner, which will be detailed in Section~\ref{sec:method}. We then develop a theoretical understanding of the neural scaling laws associated with the proposed operator-learning method, quantifying the sample and model complexity required to approximate optimal control operators across varying intrinsic dimensions. We note that the proposed learning framework is highly flexible and readily extends to various settings. A particularly important extension is to combine learned operator with Model Predictive Control (MPC) to enable closed-loop control predictions, which will be discussed in Section~\ref{sec:extension}. 

While many deep reinforcement learning formulations treat the environment condition $B$ as part of the state variable and aim to learn a single policy that generalizes across different $B$ \cite{hessel2019multi, zhou2019environment, lee2020context}, our setting regards $B$ as an external, fixed condition that defines a particular control problem instance. This distinction is more natural from a control perspective: $B$ characterizes immutable environmental factors that remain unchanged under control, rather than dynamic states to be influenced. Moreover, this treatment enables a clearer theoretical analysis: by conditioning on $B$, we can investigate the sample complexity and generalization behavior of the learned operator $\mG_\theta$ across problem instances, providing quantitative insights into its ability to adapt to new environments.

\paragraph{Major Contributions.} 
This work makes the following contributions to the study of neural operator learning for optimal control:

\begin{itemize}
    \item \emph{Neural operator formulation of optimal control.} We formulate optimal control problems in the amortized operator learning setting, where the mapping from system conditions to optimal strategies is learned directly.  The framework leverages system dynamics and the cost functional as self-supervision, enabling learning of optimal controls across problem instances without requiring ground-truth data. 
    
    \item \emph{Theoretical scaling laws.} We establish sample complexity and approximation guarantees that quantify how the feasibility of neural operator learning depends on the intrinsic dimension and regularity of the control-to-solution map.  
    
    \item \emph{Computational trade-offs and structural limits.} We characterize the ``price to pay'' for neural operator learning, identifying regimes where hidden low-dimensional structures enable efficiency and where dimensional complexity dominates. These results highlight both the promise and the limitations of neural operators, clarifying when they offer transformative gains and when classical barriers remain.  

    \item \emph{Extension to closed loop optimal control.}  
    We integrate the learned solution operator into a model predictive control framework, yielding an efficient closed-loop controller that adapts to online changes in the environment.
     
    \item \emph{Numerical demonstrations.} We provide case studies that showcase the advantages of neural operators for real-time, high-performance control, while also illustrating their limitations in high-dimensional or weakly structured problems.    
\end{itemize}

\subsection{Structure of the Paper} Section~\ref{sec:method} introduces the proposed framework and its amortized training procedure, together with a theoretical guarantee of optimality. In Section~\ref{sec:thm}, we present the main scaling-law result, which highlights the dependence of neural operator learning on the intrinsic dimension and formalizes our central argument regarding the curse of dimensionality, and Section~\ref{sec:proof} provides the detailed proof. Section~\ref{sec:extension} extends the methodology to Model Predictive Control (MPC), enabling the treatment of optimal control problems that require real-time responsiveness. Finally, Section~\ref{sec:numerics} demonstrates the effectiveness of the proposed method through a series of experiments and offers numerical validation of the predicted scaling behavior.

\section{Methodology}\label{sec:method}
In this section, we first present the amortized training framework for obtaining the neural operator, followed by a theoretical guarantee of its optimality. After that, we discuss our training methodology. All notations are provided  Table~\ref{tab:notation} in the appendix.

\subsection{Amortized Training}
Let $\Omega$ denote the state space. Let $\rho \in \mathcal{P}(\Omega)$ be a probability measure on $\Omega$ for the initial state, and $\mu \in \mathcal{P}(C([0,T]\times\Omega))$ be a probability measure on the space of continuous functions $C([0,T]\times\Omega)$ for the environmental function. In many applications (e.g., \cite{osher2017low,djuric2015hate,tenenbaum2000global}), the intrinsic dimension of the data is much smaller than the ambient dimension. Adapting this perspective to our setting means that the supports of $\rho$ and $\mu$ are effectively low-dimensional relative to $\Omega$ and $C([0,T]\times\Omega)$, respectively. For example, if the environment function $B$ is an obstacle function that can be well approximated by a Gaussian mixture, then $B$ need not be treated as an infinite-dimensional function; instead it can be parameterized by a finite set of mixture parameters (means, covariance matrices, and component weights), yielding a low intrinsic dimension. Consequently, throughout this paper we assume that $\rho$ is supported on a $d$-dimensional Riemannian submanifold $\mathcal X \subset \Omega$, and that $\mu$ is supported on a $k$-dimensional Riemannian submanifold $\mathcal M \subset C([0,T]\times\Omega)$ (see Assumption~\ref{assump:low_d} for the precise statement). We then consider the following amortized training: 
\begin{eqnarray} \label{eqn:amortize_poploss}
    \mG^* &=&  \arg \min_{\mG} \mR(\mG) = \arg \min_{\mG}  \mathbb{E}_{B\sim \mu}\mathbb{E}_{\bx_0\sim\rho}  J_B(\mathbf{x}(t), \mG(B,\bx_0)) \nonumber \\
   &&\text{s.t.} \left\{
\begin{aligned}
    \dot{\mathbf{x}}(t) &= \bdf(t, \mathbf{x}(t); \mG(B,\bx_0)(t)), \quad t \in [0, T] \\
    \mathbf{x}(0) &= \mathbf{x}_0
\end{aligned}
\right.
\end{eqnarray}
This optimization problem is the central focus of our work. Our first result, Theorem~\ref{thm:optimality}, shows that the minimizer of the empirical version \eqref{eqn:amortize_emploss_nn} coincides almost everywhere (with respect to $\mu\times\rho$) with the true solution operator.
\begin{theorem}\label{thm:optimality}
   If $\mG^*$ is obtained by solving the amortized training problem \eqref{eqn:amortize_poploss}, then $\mG^*$ is a solution operator up to measure zero sets with respect to measure $\nu = \rho \times \mu$.  That is, for $\nu-$almost every pair $(B, \bx_0)$,
   \[
   \mG^*(B, \bx_0)(t) = \bu^*(t) \in \arg \min_{\bu(t)} J_B(\bx(t), \bu(t)), 
   \]
   with $\bx(t)$ satisfying \eqref{eqn:dynamics}. 
\end{theorem}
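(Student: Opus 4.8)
The plan is to exploit the product structure of the population risk: since $\mR(\mG)$ is the $\nu$-average over instances $(B,\bx_0)$ of the instancewise cost, and the instancewise cost is by construction minimized exactly at the optimal control of that instance, the whole argument reduces to an \emph{interchange of minimization and integration} followed by a pointwise-equality-almost-everywhere step. Concretely, I would first introduce the \emph{value function}
\[
V(B,\bx_0) \;:=\; \min_{\bu(\cdot)} J_B\bigl(\bx(t),\bu(t)\bigr),
\]
where $\bx(t)$ is the trajectory generated by \eqref{eqn:dynamics} under the control $\bu(t)$. For \emph{any} admissible operator $\mG$, the predicted control $\mG(B,\bx_0)(\cdot)$ is one particular admissible control, so by definition of the minimum one has the pointwise lower bound $J_B(\bx(t),\mG(B,\bx_0))\ge V(B,\bx_0)$ for all $(B,\bx_0)$. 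Integrating against $\nu=\rho\times\mu$ then gives $\mR(\mG)\ge \mathbb{E}_{\nu}[V]$ for every $\mG$, so $\mathbb{E}_{\nu}[V]$ is a universal lower bound on the objective.

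Next I would verify that this lower bound is \emph{attained}, which pins down the optimal value. Assuming an optimal control exists for each instance and that these can be chosen measurably in $(B,\bx_0)$, the operator $\mG_{\mathrm{true}}(B,\bx_0):=\bu^*(\cdot\,;B,\bx_0)$ is admissible and satisfies $J_B(\bx(t),\mG_{\mathrm{true}}(B,\bx_0))=V(B,\bx_0)$ for every instance, hence $\mR(\mG_{\mathrm{true}})=\mathbb{E}_{\nu}[V]$. Therefore the infimum in \eqref{eqn:amortize_poploss} equals $\mathbb{E}_{\nu}[V]$ and is achieved; in particular any minimizer $\mG^*$ obeys $\mR(\mG^*)=\mathbb{E}_{\nu}[V]$.

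Finally I would pass to the almost-everywhere conclusion. Consider the nonnegative integrand $g(B,\bx_0):=J_B(\bx(t),\mG^*(B,\bx_0))-V(B,\bx_0)\ge 0$. Since $\mR(\mG^*)=\mathbb{E}_{\nu}[V]$, its $\nu$-integral vanishes, so $g=0$ for $\nu$-a.e.\ $(B,\bx_0)$. Equivalently, $\mG^*(B,\bx_0)$ achieves the instancewise minimum for $\nu$-almost every $(B,\bx_0)$, i.e.\ $\mG^*(B,\bx_0)\in\arg\min_{\bu}J_B(\bx(t),\bu(t))$ a.e., which is exactly the assertion. I would note that because the argmin need not be a singleton, the right conclusion is membership in the optimal set (equivalently, the value-function equality above) rather than pointwise equality of controls.

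The hard part will be the measurability layer that makes each preceding display rigorous: ensuring $V$ is $\nu$-measurable and, crucially, that a pointwise optimal control can be selected as a measurable operator so that $\mG_{\mathrm{true}}$ genuinely lies in the admissible class over which \eqref{eqn:amortize_poploss} is minimized. This is where I would invoke existence of minimizers (e.g.\ via lower semicontinuity of $J_B$ and compactness of the admissible controls) together with a measurable selection theorem such as Kuratowski--Ryll-Nardzewski or Filippov's lemma, under suitable continuity assumptions on $\bdf$, $L_B$, and $M$. If the minimization in \eqref{eqn:amortize_poploss} is instead taken over a restricted operator class rather than all measurable operators, one would additionally need that class to be rich enough to contain (or approximate to zero risk) the measurable optimal selection, so that the infimum still equals $\mathbb{E}_{\nu}[V]$.
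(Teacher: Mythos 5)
Your proposal is correct, and at its core it is the paper's argument recast in direct rather than contradiction form. The paper defines the bad set $S$ of pairs $(B,\bx_0)$ on which $\mG^*$ fails to attain the instancewise minimum, patches $\mG^*$ on $S$ with instancewise minimizers to form $\hat{\mG}$, and shows that $\nu(S)>0$ would force $\mR(\hat{\mG})<\mR(\mG^*)$, contradicting optimality; you instead lower-bound $\mR(\mG)\ge\mathbb{E}_{\nu}[V]$ via the value function $V$, establish attainment by a measurable selection $\mG_{\mathrm{true}}$ of instancewise optima, and conclude that the nonnegative gap $J_B(\bx(t),\mG^*(B,\bx_0))-V(B,\bx_0)$ has zero $\nu$-integral and hence vanishes $\nu$-a.e. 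These are logically equivalent: the paper's $\hat{\mG}$ is exactly your $\mG_{\mathrm{true}}$ spliced onto $S$, and ``a nonnegative function with zero integral vanishes a.e.'' is the direct form of the paper's strict-inequality contradiction. What your version buys is precision on points the paper leaves implicit: (i) you state the conclusion as membership in the possibly non-singleton $\arg\min$ (equivalently, the value-function identity), whereas the paper's definition of $S$, written as $\mG^*(B,\bx_0)>\arg\min_{\bu} J_B$, informally compares a control with a set; and (ii) you explicitly flag the measurability layer --- that $V$ be measurable, that an instancewise optimizer admit a measurable selection (Kuratowski--Ryll-Nardzewski or Filippov), and that the class over which \eqref{eqn:amortize_poploss} is minimized contain, or achieve zero excess risk against, that selection. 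The paper's construction faces precisely the same requirements (measurability of $S$ and of the selection defining $\hat{\mG}$ on $S$) but, following Theorem~4.3 of its cited reference, does not address them; so your ``hard part'' is a genuine gap in the paper's own write-up rather than an artifact of your route, and your approach closes it.
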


The proof of \cref{thm:optimality} directly follows the idea of Theorem 4.3 in \cite{huang2024unsupervised}.
\begin{proof}
Let $S = \{ (B,\bx_0) | \mG^*(B, \bx_0)>\arg \min_{\bu} J_B(\bx(t), \bu) \text{ and $\bx(t)$ satisfying \eqref{eqn:dynamics}} \}$ and denote its complement $S^c =(C([0,T]\times\Omega) \times  \Omega) \setminus S$. Consider another operator $\hat{\mG}$ such that $\hat{\mG}|_{S^c} = \mG^*|_{S^c}$ and $\hat{\mG}(B, \bx_0) \in \arg \min_{\bu} J_B(\bx(t), \bu)$ with $\bx(t)$ satisfying \eqref{eqn:dynamics}, $\forall (B, \bx_0) \in S$. Now suppose $\nu(S)>0$, which implies
\begin{align*}
    \mathbb{E}_{B\sim \mu}\mathbb{E}_{\bx_0\sim\rho}  J_B(\mathbf{x}(t), \mG^*(B,\bx_0)) &= \int_{S} J_B(\mathbf{x}(t), \mG^*(B,\bx_0)) \rd \nu(B, \bx_0) + \int_{S^c} J_B(\mathbf{x}(t), \mG^*(B,\bx_0)) \rd \nu(B, \bx_0), \\
    & > \int_{S} J_B(\mathbf{x}(t), \hat{\mG} (B,\bx_0)) \rd \nu(B, \bx_0) + \int_{S^c} J_B(\mathbf{x}(t), \hat{\mG} (B,\bx_0)) \rd \nu(B, \bx_0), \\
    & = \mathbb{E}_{B\sim \mu}\mathbb{E}_{\bx_0\sim\rho}  J_B(\mathbf{x}(t), \hat{\mG}(B,\bx_0)),
\end{align*}
This result implies that $\hat{\mG}$ achieves strictly smaller population risk than $\mG^*$, contradicting the optimality of $\mG^*$ for the amortized training problem \eqref{eqn:amortize_poploss}. Thus $\nu(S) = 0$.
\end{proof}

\subsection{Training Algorithm} 
We generate a training set by drawing i.i.d. tasks $(B^i,\bx_0^i)\sim\mu\times\rho$ for $i=1,\dots,n$, yielding $\Gamma=\{(B^i,\bx_0^i)\}_{i=1}^n$. Our goal is to approximate the target solution operator $\mG^*$ using a neural network $\mG_\theta$ drawn from a hypothesis class $\mathscr{G}$. For the purposes of analysis, we focus on deep ReLU networks. We emphasize, however, that the proposed framework is compatible with a wide range of network architectures in practical implementations.

We specify deep ReLU networks as follows. 
For a given input $\mathbf{z} \in \mathbb{R}^{n_0}$, an $l$-layer,   ReLU neural network is defined as follows
\begin{equation}\label{eqn:relu_def}
    g^l_\theta(\bz) = W_l \cdot \operatorname{ReLU}\bigl( W_{l-1} \cdots \operatorname{ReLU}\bigl( W_1 \bz + \mathbf{b}_1 \bigr) \cdots + \mathbf{b}_{l-1} \bigr) + \mathbf{b}_l.
\end{equation}
For each hidden layer $i = 1, 2, \ldots, l-1$, the weight matrix and bias vector are denoted by  
$W_i \in \mathbb{R}^{n_i \times n_{i-1}}$ and $\mathbf{b}_i \in \mathbb{R}^{n_i}$, respectively, where $n_i$ represents the width of layer $i$. The overall width of the network is defined as  
\begin{equation}
    p = \max_{i \in \{1, 2, \ldots, l-1\}} n_i,
\end{equation}
and all trainable parameters are collected in  
$\theta = \{ W_1, \mathbf{b}_1, W_2, \mathbf{b}_2, \ldots, W_l, \mathbf{b}_l \}$. 

Because the cost functional and dynamics depend on trainable parameters $\theta$ only through the neural operator $\mG_\theta$, we will use two equivalent forms throughout: the parameter form for optimization and the operator form for theoretical analysis. Specifically, we define following empirical loss function on training set $\Gamma$:
\begin{equation}\label{eqn:amortize_emploss_nn}
 \mL(\theta;\Gamma) =  \mR_\Gamma(\mG_\theta) =  \frac{1}{n}\sum_{i = 1}^n 
 J_{B^i}\!\left(\mathbf{x}_\theta^i(t), \mG_\theta(B^i,\bx^i_0)\right)
\end{equation}
subject to
\begin{equation}\label{eqn:amortize_dynamics_nn}
\left\{
\begin{aligned}
    \dot{\mathbf{x}}_\theta^i(t) &= \bdf\!\left(t, \mathbf{x}_\theta^i(t); \mG_\theta(B^i,\bx^i_0)(t)\right), \quad t \in [0, T], \\
    \mathbf{x}_\theta^i(0) &= \mathbf{x}^i_0.
\end{aligned}
\right.
\end{equation}
Here the dynamics are modeled using a neural ODE \cite{chen2018neural} and $\mathbf{x}_\theta^i(t)$ denotes the trajectory obtained by solving the dynamics~\ref{eqn:amortize_dynamics_nn} with the control generated by the network $\mG_\theta$. In the parameter form, the optimal network parameters are obtained by minimizing the empirical loss:
\begin{equation}\label{eqn:emp_loss_theta}
    \theta^* = \arg\min_\theta \mL(\theta; \Gamma),
\end{equation}
Equivalently, in operator form, the empirical risk minimizer within the deep ReLU network class $\mathscr{G}$ for the training set $\Gamma$ is given by
\begin{equation}\label{eqn:emp_loss_G}
\mG_{\Gamma;\theta^*} = \arg\min_{\mG_\theta \in \mathscr{G}} \mR_\Gamma(\mG_\theta).
\end{equation}
The overall training procedure is summarized in Algorithm~\ref{alg:training}.
\begin{algorithm}[htbp]
    \caption{Self-Supervised Amortized Training}
    \label{alg:training}
    \begin{algorithmic}[1]
        \Require Training dataset $\Gamma = \{(B^i,\bx_0^i)\}_{i=1}^n \in \mM\times \mX$, number of epochs $N$, batch size $B_s$.
        \State Initialize network parameters $\theta$
        \For{epoch = 1 to $N$} 
        \State Shuffle $\Gamma$
        \For{each mini-batch $\{(B^j,\bx_0^j)\}_{j=1}^{B_s}$ from $\Gamma$}
        \State Simulate trajectories by solving the neural ODE with controls generated by $\mG_\theta$
        \[
        \dot{\bx}_\theta^j(t) = \bdf\!\big(t,\bx_\theta^j(t);\mG_\theta(B^j,\bx^j_0)(t)\big),
        \quad \bx_\theta^j(0) = \bx^j_0, \quad j=1, \cdots, B_s.
        \]


      \State Compute the empirical loss $\mathcal{L}(\theta)$ on the mini-batch and obtain $\nabla_\theta \mathcal{L}(\theta)$ by backpropagating through the ODE solver in PyTorch~\cite{paszke2019pytorch}, and then update $\theta$ using Adam.
        
        \EndFor
        \EndFor
        \State \Return Trained model $\mG_{\Gamma;\tilde{\theta}^*}$
    \end{algorithmic}
\end{algorithm}

We remark that in the above algorithm, we assume a fixed dataset $\Gamma$ and train for $N$ epochs. When the computational cost of sampling $(B, \bx_0)$ from $\mM \times \mX$ is low, one can instead generate fresh samples in each batch. This is effectively equivalent to using a dataset of size $nN$, with each sample seen only once. Such a procedure may improve performance by reducing generalization error, but it only increases the total sample size by a constant factor and does not affect the exponential complexity presented below. To avoid unnecessary complications in analysis, we present the algorithm using a fixed dataset. Additionally, if the state $\bx$ lives in high dimension and the neural ODE requires a small time step for stability and accuracy, one can instead employ the adjoint method for Neural ODEs~\cite{chen2018neural} to compute gradients while substantially reducing memory usage (at the expense of additional computation from solving an associated adjoint ODE backward in time).

\section{Scaling Law of Generalization Error}\label{sec:thm}

This section dedicates to analyzing the generalization error of the solution operator $\mG_{\Gamma;\theta^*}$ obtained from \eqref{eqn:emp_loss_G} given a training dataset $\Gamma = \{(B^i,\bx_0^i)\}_{i=1}^n$.
More precisely, we study the error bound for the generalization error $\mathbb{E}_\Gamma [\mR(\mG_{\Gamma;\theta^*}) - \mR(\mG^*) ]$, where the excess risk of the empirical minimizer:
\begin{equation}
\label{eqn:genearlizationerror}
\mR(\mG^*) - \mR(\mG_{\Gamma;\theta^*}) =
\mathbb{E}_{B\sim \mu} \mathbb{E}_{\bx_0\sim\rho}  (J_B(\mathbf{x}(t), \mG^*(B,\bx_0)) - J_B(\mathbf{x}(t), \mG_{\Gamma;\theta^*}(B,\bx_0)))
\end{equation}
quantifies the performance gap on unseen tasks with $\bx_0 \sim \rho$ and $B \sim \mu$. Note that, in practice, the trained operator $\mG_{\Gamma;\tilde{\theta}^*}$ obtained from Algorithm~\ref{alg:training} generally differs from the empirical risk minimizer $\mG_{\Gamma;\theta^*}$ due to optimization error. In the following generalization error analysis, we focus on the latter, $\mG_{\Gamma;\theta^*}$, a global minimizer from the empirical training loss. 
It is worth noting that the gap between the practically obtained operator 
$\mG_{\Gamma;\tilde{\theta}}$ and the empirical loss minimizer 
$\mG_{\Gamma;\theta^*}$ may, in principle, be controlled under additional 
assumptions on the optimizer and the training dynamics. However, 
characterizing this error rigorously is substantially more challenging and 
remains an open problem, lying beyond the scope of the present work.

In general, the solution operator \eqref{eqn:solutionoperator} of the optimal control problem is a map
acting between infinite-dimensional function spaces.  
From a learning perspective, directly approximating such an operator is challenging unless additional structure is present.  
Recent theoretical analyses of deep neural networks and neural operators demonstrate that sample and model complexity typically scale with the \emph{intrinsic dimension} of the input space, rather than its ambient dimension.  
Motivated by this observation, we assume that the distributions of the inputs are supported on low-dimensional manifolds:  
the initial-state distribution $\rho$ is supported on a $d$-dimensional Riemannian submanifold $\mathcal{X}\subset \Omega$, and the distribution $\mu$ of the environment field $B$ is supported on a $k$-dimensional Riemannian submanifold $\mathcal{M}\subset C([0,T]\times\Omega)$.  
This assumption is natural in many applications.  
The region of interest for initial conditions often lies in a lower-dimensional subset of the ambient state space (for example, physically realizable configurations or states constrained by conservation laws), and the environment $B$ is typically governed by only a few parameters---such as temperature profiles, material coefficients, boundary conditions, or forcing amplitudes---despite being represented as a high-dimensional function.  
Consequently, although the operator $\mathcal{G}$ is formally defined on infinite-dimensional domains, the \emph{effective} domain encountered in practice is finite-dimensional, making operator learning both meaningful and tractable. This leads to the following low-dimension manifold assumption.

\begin{assumption}[Low-dimensional bounded manifold]\label{assump:low_d}
    We consider that the initial-state distribution $\rho$ is supported on a $d$-dimension Riemannian manifold $\mX \subset \Omega=\mathbb{R}^{D}$ and environment distribution $\mu$ is supported on a $k$-dimension Riemannian manifold $\mM \subset C([0,T] \times \Omega)$. Both $\mX$ and $\mM$ have reach $\tau>0$. Moreover, there exists a constant $E>0$ such that, for any $(B, \bx_0) \in \mM\times \mX$, we have 
    \[
    |(\bx_0)_j|\le E \quad \text{for all } j=1,\dots,D,
    \qquad\text{and}\qquad
    |B(\bx)|\le E \quad \text{for all } \bx\in\Omega.
    \]
\end{assumption}

Our goal is to characterize how the generalization error \ref{eqn:genearlizationerror} and network complexity depend on the intrinsic dimensions of the manifolds $\mX$ and $\mM$, thereby revealing the impact of the curse of dimensionality. We begin by stating assumptions.


\begin{assumption}[Lipschitz continuity]\label{assump:Lip_cont} 
We assume $L$ and $M$ defined in \eqref{eqn:Loss} and $f$ defined in \eqref{eqn:dynamics} are Lipschitz continuous. More precisely, we have:
   \begin{eqnarray}
    |L(t,\bx,\bu,B)  - L(t',\bx',\bu', B')| &\leq& \bL^L_t |t - t'| + \bL^L_\bx \|\bx - \bx'\|_{\infty} + \bL^L_\bu \|\bu - \bu'\|_{\infty}, \\
  |M(\bx)  - M(\bx')| &\leq& \bL^M \|\bx - \bx'\|_{\infty},  \\
       |\bdf(t, \mathbf{x}; \mathbf{u}) - \bdf(t', \mathbf{x}'; \mathbf{u}')| &\leq& \bL^\bdf_t |t - t'| + \bL^\bdf_\mathbf{x}\|\mathbf{x} - \mathbf{x}'\|_{\infty} + \bL^\bdf_{\mathbf{u}}\|\mathbf{u} -\mathbf{u}'\|_{\infty}.
   \end{eqnarray}

\end{assumption}

These Lipschitz continuity assumptions are standard and are satisfied in many practical settings, for example, in the numerical experiments we present later on obstacle avoidance, maze navigation, and the unicycle model. As a result, there exist a $\bL^J>0$,
depending only on the
Lipschitz constants $\bL^L_\bx, \bL^L_\bu, \bL_M, \bL^\bdf_\bx, \bL^\bdf_\bu$ and $T$ in \eqref{eqn:Loss}, such that
\[
\sup_{B,\bx_0}|J_B(\bx(t),\bu)  - J_B(\bx'(t),\bu') | \leq \bL^J  \|\bu - \bu'\|_{\infty}
\]
We further assume a regularity condition on the control, stated below.
\begin{assumption}[Regularity of control]
    For all $\bx_0 \in \mX$ and $B \in \mM$, we assume the optimal control function $\bu^*(t; B, \bx_0) \in \mathcal{H}^{s,\alpha}(\mathbb{R}^+ \times \mX \times \mM)$, with positive integer $s$ and $\alpha>0$.
\end{assumption}

\begin{assumption}[Uniform boundedness of the cost functional]\label{assump:bound_cost}
We assume that for all $(B, \bx_0)\in \mM\times \mX$ and all parameters $\theta$, let $\bx(t)$ be the trajectory driven $\mG_\theta$ according to the dynamics described in \eqref{eqn:dynamics},  $|J_B(\bx(t),\mathcal{G}_\theta(B,\bx_0))| \leq R^J$  holds uniformly.
    
\end{assumption}

In addition, we specify below the class of neural networks that defines the hypothesis space over which the empirical loss is optimized.
\begin{definition} [Bounded ReLU Network Class]
We denote $\mathscr{F}(R,\kappa,l,p,K)$ as a class of ReLU networks expressed as \eqref{eqn:relu_def} with depth $l$, width bounded by $p$, bounded weight parameters and bounded output, that is for $g \in \mathscr{F}(R,\kappa,l,p,K)$, $f(\bz)$ satisfies $(\bz = (t, B, \bx_0))$
\[
\|g\|_{\infty} \le R,\ \|W_i\|_{\infty,\infty} \le \kappa,\ \|\mathbf{b}_i\|_{\infty} \le \kappa \ \text{for } i=1,\ldots,l, \ 
\sum_{i=1}^l \|W_i\|_{0} + \|\mathbf{b}_i\|_{0} \le K.
\]
\end{definition}

\begin{definition}
[Bounded Vector-Valued ReLU Network Class]
For $q \in \mathbb{N}$, we denote by $\mathscr{G}(q, R,\kappa,l,p,K)$ the class of 
$q$-dimensional ReLU networks whose components belong to $\mathscr{F}(R,\kappa,l,p,K)$.  
That is, for $\mG \in \mathscr{G}(q, R,\kappa,l,p,K)$, 
\[
\mG(\bz) = \bigl(g_1(\bz),\ldots,g_q(\bz)\bigr)^\top,\qquad 
\bz = (t,B,\bx_0),
\]
where each $g_i \in \mathscr{F}(R,\kappa,l,p,K)$ for $i=1,\ldots,q$.
\end{definition}
Under these assumptions, we can now establish the following generalization error bound. 
\begin{theorem}[Generalization Error]\label{thm:gen}
Given a training set $\Gamma = \{(B^i,\bx_0^i)\}_{i=1}^n \in \mM \times \mX$, there exists a neural operator $\mG_{\Gamma;\theta^*}$ in ReLU network class $\mathscr{G}(q, R, \kappa, l, p, K)$ with $l = \widetilde{O}(\frac{s+\alpha}{d+k+2(s+\alpha)} \log n)$, $p=\widetilde{O}(n^{\frac{d+k}{2(\alpha+s)+d+k}})$, $K = \widetilde{O}\left(\frac{s+\alpha}{2(s+\alpha)+d+k} \, n^{\tfrac{d+k}{2(s+\alpha)+d+k}} \log n\right)$ and $\kappa = O(\max \{ 1, E, \sqrt{d+k}, \tau^2 \})$, such that
\[
\displaystyle \mathbb{E}_\Gamma [ \mR(\mG_{\Gamma;\theta^*}) - \mR(\mG^*)]  \leq C_1 \sqrt{n^{-\frac{2(s+\alpha)}{d+k+2(s+\alpha)}}+\frac{D+k}{n}}\log^{\frac{3}{2}}n + C_2 \sqrt{\frac{\log n}{n}} + C_3 n^{-\frac{s+\alpha}{d+k+2(s+\alpha)}},
\]
where $C_1$ is a constant depends on $q, R^J, E, \bL^J, \log D, d, k, \tau, \alpha$ and $s$; $C_2$ is a constant depends on $\alpha$ and $s$; and $C_3$ is a constant depends on $\bL^M, \bL^{\bdf},\bL^L,C_{\bdf}, R$ and $T$.
\end{theorem}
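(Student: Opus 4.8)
The plan is to bound the expected excess risk by the standard approximation--estimation (bias--variance) decomposition for empirical risk minimization over the class $\mathscr{G}=\mathscr{G}(q,R,\kappa,l,p,K)$. Letting $\bar{\mG}$ denote a best approximant of $\mG^*$ inside $\mathscr{G}$ and using that $\mG_{\Gamma;\theta^*}$ minimizes the empirical risk $\mR_\Gamma$ over $\mathscr{G}$, I would write
\[
\mR(\mG_{\Gamma;\theta^*}) - \mR(\mG^*) \;\le\; 2\sup_{\mG\in\mathscr{G}}\bigl|\mR(\mG)-\mR_\Gamma(\mG)\bigr| \;+\; \bigl(\mR(\bar{\mG})-\mR(\mG^*)\bigr),
\]
since the empirical-suboptimality term $\mR_\Gamma(\mG_{\Gamma;\theta^*})-\mR_\Gamma(\bar{\mG})$ is nonpositive. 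Taking $\mathbb{E}_\Gamma$ reduces the theorem to bounding (i) the approximation error $\mR(\bar{\mG})-\mR(\mG^*)$ and (ii) the uniform deviation of the empirical process indexed by $\mathscr{G}$. Throughout, the two structural facts that let me pass from the cost functional $J$ to the operator $\mG$ itself are the uniform bound $|J_B|\le R^J$ (Assumption~\ref{assump:bound_cost}) and the Lipschitz estimate $|J_B(\bx,\bu)-J_B(\bx',\bu')|\le\bL^J\|\bu-\bu'\|_\infty$ derived from Assumption~\ref{assump:Lip_cont}.

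For the approximation term I would invoke ReLU approximation theory for functions on low-dimensional manifolds. By the regularity assumption, $(t,B,\bx_0)\mapsto\bu^*(t;B,\bx_0)$ lies in $\mathcal{H}^{s,\alpha}$ over $\mathbb{R}^+\times\mM\times\mX$, whose effective dimension for the rate is $d+k$ (the time coordinate not entering the leading exponent). Using an atlas together with a partition of unity whose local charts, coordinate projections, and Taylor approximants are each realized by ReLU subnetworks (following the manifold approximation results referenced in Section~\ref{sec:thm}), I would construct $\bar{\mG}\in\mathscr{G}$ with $\|\bar{\mG}-\mG^*\|_\infty\le\epsilon$ at depth $l=\widetilde{O}(\log(1/\epsilon))$, width $p=\widetilde{O}(\epsilon^{-(d+k)/(s+\alpha)})$, sparsity $K=\widetilde{O}(\epsilon^{-(d+k)/(s+\alpha)}\log(1/\epsilon))$, and weight bound $\kappa=O(\max\{1,E,\sqrt{d+k},\tau^2\})$; here the reach $\tau$ and envelope $E$ from Assumption~\ref{assump:low_d} control the chart radii and the projection maps. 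The Lipschitz property then yields $\mR(\bar{\mG})-\mR(\mG^*)\le\bL^J\epsilon$, which is the origin of the term $C_3\,n^{-(s+\alpha)/(d+k+2(s+\alpha))}$ once $\epsilon$ is fixed.

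For the uniform deviation I would symmetrize and apply Dudley's entropy integral to the induced loss class $\mathscr{G}_J=\{(B,\bx_0)\mapsto J_B(\bx,\mG(B,\bx_0)):\mG\in\mathscr{G}\}$, so that $\mathbb{E}_\Gamma\sup_{\mG}|\mR-\mR_\Gamma|\lesssim n^{-1/2}\int_0^{R^J}\sqrt{\log\mathcal{N}(u,\mathscr{G}_J,\|\cdot\|_\infty)}\,du$. The $\bL^J$-Lipschitz dependence of $J$ on the control reduces $\mathcal{N}(u,\mathscr{G}_J)$ to the covering number of the bounded ReLU class, for which $\log\mathcal{N}(u,\mathscr{G},\|\cdot\|_\infty)\lesssim Kl\log(\kappa pl/u)$. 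Substituting $K=\widetilde{O}(n^{(d+k)/(2(s+\alpha)+d+k)})$ and $l=\widetilde{O}(\log n)$ gives a deviation of order $n^{-(s+\alpha)/(2(s+\alpha)+d+k)}\log^{3/2}n$, the three logarithmic factors arising from $\sqrt{l}$, from the resolution log $\sqrt{\log(\kappa pl/u)}$, and from the extra $\sqrt{\log n}$ inside $K$. This reproduces the first term $C_1\sqrt{n^{-2(s+\alpha)/(d+k+2(s+\alpha))}+(D+k)/n}\,\log^{3/2}n$, the additive $(D+k)/n$ inside the root reflecting the ambient coordinates of $\bx_0\in\mathbb{R}^D$ and the $k$ chart coordinates of $B$; the residual lower-order $C_2\sqrt{\log n/n}$ collects the remaining concentration terms.

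The architecture is finally pinned down by balancing bias against variance: choosing $\epsilon\sim n^{-(s+\alpha)/(2(s+\alpha)+d+k)}$ equates $\bL^J\epsilon$ with the estimation rate and simultaneously yields the stated $l$, $p$, $K$, and $\kappa$. I expect the genuine difficulty to be the approximation step for the function-valued input $B$: unlike a finite-dimensional covariate, $B$ ranges over $C([0,T]\times\Omega)$, so one must first exploit the $k$-dimensional manifold structure of $\mM$ (its charts, reach $\tau$, and embedding) to represent $\mG^*$ as a genuinely finite-variate $\mathcal{H}^{s,\alpha}$ function of local coordinates, and then certify that these charts, the partition of unity, and the coordinate projections are all implementable by ReLU subnetworks within the weight budget $\kappa$ and without inflating $K$ beyond the claimed order. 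By contrast, propagating the control error through the neural-ODE trajectory to certify the constant $\bL^J$ is a routine Gr\"onwall estimate built from $\bL^M,\bL^{\bdf},\bL^L,C_{\bdf}$, and $T$, and is what populates the constant $C_3$.
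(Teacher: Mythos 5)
Your proposal matches the paper's proof in all essentials: the same ERM approximation--estimation decomposition, the same manifold-based ReLU approximation result (the paper applies Theorem~1 of \cite{chen2022nonparametric} componentwise to the $q$-valued control), the same Gr\"onwall/ODE-stability argument converting the control perturbation $\epsilon$ into a cost error with constant depending on $\bL^M,\bL^{\bdf},\bL^L,C_{\bdf},T$, and the same symmetrization plus Dudley entropy-integral bound using the $\bL^J$-Lipschitz reduction to network covering numbers, balanced at $\epsilon \sim n^{-\frac{s+\alpha}{d+k+2(s+\alpha)}}$. The only immaterial deviation is that you absorb both empirical-process terms into $2\sup_{\mG\in\mathscr{G}}|\mR(\mG)-\mR_\Gamma(\mG)|$, whereas the paper bounds the deviation at the fixed approximant $\mG_{\hat\theta}$ separately by Hoeffding's inequality, which is precisely what yields the explicit $C_2\sqrt{\log n/n}$ term (your version absorbs it into the dominant $C_1$ term, giving a bound of the same order).
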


This theorem characterizes how the sample and model complexities scale with the intrinsic dimensions $d$ and $k$, together with the regularity parameters $\alpha$ and $s$ of the control function, when deep ReLU networks are used as the hypothesis class. A key insight is that although the ambient spaces in which the state and environment reside may be extremely high dimensional or even infinite dimensional, the relevant quantities to be inferred often lie on low dimensional manifolds. In particular, the initial state $\bx_{0}$ may vary only over a $d$ dimensional submanifold $\mathcal{X}\subset\mathbb{R}^{D}$, and the environment parameter $B$, though formally an element of a function space of infinite ambient dimension, may in practice be parameterized by a $k$ dimensional manifold $\mathcal{M}\subset C([0,T]\times\Omega)$.
This theorem makes this geometric structure precise: the sample and model complexities depend only on the intrinsic dimensions $d$ and $k$, and not on the ambient dimensions of the state space or function space. This provides formal evidence that neural networks can overcome the curse of ambient dimensionality.

At the same time, the result highlights an equally important phenomenon. The rate
$
n^{\frac{s+\alpha}{d+k+2(s+\alpha)}}
$
shows that the sample complexity deteriorates exponentially with the total intrinsic dimension $d+k$. This reflects an unavoidable curse of intrinsic dimensionality: as the manifolds $\mathcal{X}$ and $\mathcal{M}$ become higher dimensional, learning the optimal control operator necessarily becomes more difficult. This dependence is a fundamental cost of achieving computational efficiency in operator learning for optimal control.

The complete proof is provided in Section~\ref{sec:proof}.


\section{Proof of Main Theorem}\label{sec:proof}
This section provides detailed proof of the main theorem. 
We begin with a sketch of the proof. Note that the excess risk of the empirical minimizer 
$\mG_{\Gamma;\theta^*}$ trained on a dataset $\Gamma$ can be decomposed as:
\begin{eqnarray}
        0 \leq \mR(\mG_{\Gamma;\theta^*}) - \mR(\mG^*)  & = &
    \mR(\mG_{\Gamma;\theta^*}) - \mR_\Gamma(\mG_{\Gamma;\theta^*}) +
    \mR_\Gamma(\mG_{\Gamma;\theta^*}) - \mR_\Gamma(\mG_{\hat\theta}) + 
    \mR_\Gamma(\mG_{\hat\theta})- \mR(\mG_{\hat\theta})   + 
     \mR(\mG_{\hat\theta}) - \mR(\mG^*) \nonumber \\
     &\leq &     \mR(\mG_{\Gamma;\theta^*}) - \mR_\Gamma(\mG_{\Gamma;\theta^*}) + 
    \mR_\Gamma(\mG_{\hat\theta})- \mR(\mG_{\hat\theta})   + 
     \mR(\mG_{\hat\theta}) - \mR(\mG^*) \label{ieq:negative}
\end{eqnarray}
where $\mG_{\hat\theta}$ is an approximator that we will construct on the hypothesis class and  the inequality \eqref{ieq:negative} follows the fact that
\[
\mR_\Gamma(\mG_{\Gamma;\theta^*}) = \min_{\mG_\theta\in\mathscr{G}} \mR_\Gamma(\mG) \leq  \mR_\Gamma(\mG_{\hat\theta}).
\]
Therefore, the generalization error can be decomposed into three parts:
\begin{eqnarray}\label{eqn:errordecomp0}
   \mathbb{E}_\Gamma (\mR(\mG_{\Gamma;\theta^*}) -\mR(\mG^*)) \leq 
    \underbrace{\mathbb{E}_\Gamma(\mR(\mG_{\Gamma;\theta^*}) - \mR_\Gamma(\mG_{\Gamma;\theta^*}))}_{{\rm I}} + 
    \underbrace{\mathbb{E}_\Gamma (\mR_\Gamma(\mG_{\hat\theta})- \mR(\mG_{\hat\theta}))}_{{\rm II}} + 
        \underbrace{(\mR(\mG_{\hat\theta}) - \mR(\mG^*))}_{{\rm III}}
\end{eqnarray}
where ${\rm I, II}$ correspond to the statistic errors and ${\rm III}$ characterizes approximation error. In the rest of this section, we derive bounds for each component and then combine them to complete the proof.

\subsection{Approximation Error Term {\rm III}}
First, we analyze the approximation error term~{\rm III} in the decomposition~\eqref{eqn:errordecomp0}, which reflects the inherent approximation capability of the hypothesis class. As a first step, we establish the following ODE stability result.
\begin{lemma}\label{lemma:ODE_controlPerturb}
Consider two ODEs with the same initial condition:
\begin{align*}
    \frac{d \bx}{dt} &= \bdf(t,\bx;\bu(t,\bx)), \quad \bx(0) = \bx_0, \\
    \frac{d \by}{dt} &= \bdf(t,\by;\bv(t,\by)), \quad \by(0) = \bx_0,
\end{align*}
where we assume that $ \|\bv(t,\bx) - \bu(t,\bx)\|_{\infty} \leq \epsilon $ uniformly and that $\bdf$ is Lipschitz continuous in $\bx$ and $\bu$  with constant $\bL^\bdf_\bx$ and $\bL^\bdf_\bu$, respectively.  Then
\[
\|\bx(t) - \by(t)\|_{\infty} \leq C_\bdf (e^{\bL^\bdf_\bx t} - 1) \epsilon.
\]
where $C_\bdf = \displaystyle \bL^\bdf_\bu / \bL^\bdf_\bx$. 
\end{lemma}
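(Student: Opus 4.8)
The plan is to run a standard Grönwall / comparison argument on the trajectory difference. First I would set $e(t) = \bx(t) - \by(t)$ and record that $e(0) = \mathbf{0}$, since both trajectories start at $\bx_0$. Differentiating gives $\dot e(t) = \bdf(t,\bx(t);\bu) - \bdf(t,\by(t);\bv)$, and the idea is to insert the intermediate quantity $\bdf(t,\by(t);\bu)$ so that $\dot e(t)$ splits into a purely state-driven part $\bdf(t,\bx;\bu)-\bdf(t,\by;\bu)$ and a purely control-driven part $\bdf(t,\by;\bu)-\bdf(t,\by;\bv)$. The first part is controlled by the state-Lipschitz assumption by $\bL^\bdf_\bx\|e(t)\|_{\infty}$, while the second is controlled by the control-Lipschitz assumption together with the uniform bound $\|\bu-\bv\|_{\infty}\le\epsilon$ by $\bL^\bdf_\bu\epsilon$. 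Here it is essential that the two controls are evaluated at the common state $\by(t)$, so that no term involving the spatial variation of the control itself survives; this is exactly why only $\bL^\bdf_\bx$, and not a combined constant, enters the final exponent.

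Next I would pass to the integral form, both to handle the initial condition and to avoid differentiating the non-smooth $\ell^\infty$ norm: writing $e(t) = \int_0^t \dot e(\tau)\,\rd\tau$ and taking norms yields the scalar integral inequality $\|e(t)\|_{\infty} \le \bL^\bdf_\bu\,\epsilon\,t + \bL^\bdf_\bx \int_0^t \|e(\tau)\|_{\infty}\,\rd\tau$. I would then close the estimate with Grönwall's inequality in comparison form: the function $g(t) = C_\bdf\,(e^{\bL^\bdf_\bx t}-1)\,\epsilon$ with $C_\bdf = \bL^\bdf_\bu/\bL^\bdf_\bx$ satisfies $g(0)=0$ and $\dot g = \bL^\bdf_\bx g + \bL^\bdf_\bu\epsilon$ with equality, so the comparison principle gives $\|e(t)\|_{\infty} \le g(t)$, which is precisely the claimed bound.

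The argument is essentially routine; the only genuinely delicate point is the bookkeeping in the splitting step, and in particular recognizing that the control perturbation must be measured between $\bu$ and $\bv$ at a single common state, so that the estimate does not pick up any Lipschitz dependence of the control law on $\bx$. If the controls were genuinely state-dependent and not assumed Lipschitz in $\bx$, the clean exponent $e^{\bL^\bdf_\bx t}$ would fail; the statement is consistent with the open-loop controls used elsewhere in the paper, for which $\bu=\bu(t)$ and $\bv=\bv(t)$. A secondary, purely technical point is justifying the manipulation of the $\ell^\infty$ norm, which the integral formulation sidesteps cleanly.
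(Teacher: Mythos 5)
Your proposal is correct and follows essentially the same route as the paper's proof: the same add-and-subtract splitting of $\bdf$ into a state-driven and a control-driven difference, the same Lipschitz bounds yielding the linear inequality with forcing $\bL^\bdf_\bu\epsilon$, and Grönwall to obtain $C_\bdf(e^{\bL^\bdf_\bx t}-1)\epsilon$. Your use of the integral form (avoiding differentiation of the $\ell^\infty$ norm) and your explicit remark that the control gap must be measured at a common state—harmless here since the controls are effectively open-loop—are minor technical polish on the paper's direct differential-inequality argument, not a different method.
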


\begin{proof}
Let $ \delta(t) := \bx(t) - \by(t) $. Then,
\[
\frac{d\delta}{dt} = \frac{d\bx}{dt} - \frac{d\by}{dt} = \bdf(t,\bx;\bu(t,\bx)) - \bdf(t,\by;\bv(t,\by)).
\]
We add and subtract $ \bdf(t,\by;\bu(t,\bx))$:
\[
\frac{d\delta}{dt} = \bdf(t,\bx;\bu(t,\bx)) - \bdf(t,\by;\bu(t,\bx)) + \bdf(t,\by;\bu(t,\bx)) - \bdf(t,\by;\bv(t,\by)) 
\]
Taking norms, By the Lipschitz continuity of $ \bdf $,  we get:
\[
\left\| \frac{d\delta}{dt} \right\|_{\infty} \leq \\bL^\bdf_\bx \|\delta(t)\|_{\infty} + \bL^\bdf_\bu \epsilon.
\]
This gives a differential inequality:
\[
\frac{d}{dt} \|\delta(t)\|_{\infty} \leq \\bL^\bdf_\bx \|\delta(t)\|_{\infty} + \bL^\bdf_\bu \epsilon.
\]
Applying Grönwall's inequality:
\[
\displaystyle \|\bx(t) - \by(t)\|_{\infty} \leq \bL^\bdf_\bu \epsilon \int_0^t e^{\\bL^\bdf_\bx(t-s)} ds = \bL^\bdf_\bu  \frac{e^{\\bL^\bdf_\bx t} - 1}{\\bL^\bdf_\bx} \epsilon.
\]
\end{proof}
The stability estimate in Lemma~\ref{lemma:ODE_controlPerturb} allows us to quantify how errors in approximating the control function translate into trajectory deviations and, consequently, into errors in the cost functional, leading to the following approximation error analysis.
\begin{lemma}[Approximation error]\label{lemma:approx}
For any $\epsilon \in (0,1)$, there exists a ReLU network class $\mathscr{G}(R, \kappa, l, p, K)$ with $L = \widetilde{O}(\log \frac{1}{\epsilon})$, $p=\widetilde{O}(\epsilon^{-\frac{d}{\alpha+s}})$ and $K = \widetilde{O}\!\left(\epsilon^{-\tfrac{d}{s+\alpha}} 
\log \tfrac{1}{\epsilon} + (D+k) \log \tfrac{1}{\epsilon}\right)$, such that for any $\mG^*$, 
there exists a choice of weight parameters yielding a neural operator $\mG_{\hat\theta}$ such that 
\begin{equation}
    \mR(\mG_{\hat\theta}) -  \mR(\mG^*) \leq \widetilde{C}_{\rm III} \epsilon,
\end{equation}
where $\widetilde{C}_{\rm III}$ is a constant depends on $\bL^M, \bL^{\bdf},\bL^L,C_{\bdf}$ and $T$.
\end{lemma}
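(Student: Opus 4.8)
The plan is to bound term~{\rm III} by reducing the population-risk gap to a uniform ($L^\infty$) control-approximation error and then approximating the target operator with a chart-based ReLU construction on the low-dimensional input manifold. First I would invoke the Lipschitz estimate for the cost functional recorded just before the statement, $\sup_{B,\bx_0}|J_B(\bx(t),\bu)-J_B(\bx'(t),\bu')|\le\bL^J\|\bu-\bu'\|_\infty$, which already absorbs the trajectory perturbation via Lemma~\ref{lemma:ODE_controlPerturb} (so that $\bL^J$ collects $\bL^M,\bL^\bdf,\bL^L,C_\bdf$ and $T$). Taking $\bu=\mG_{\hat\theta}(B,\bx_0)$, $\bu'=\mG^*(B,\bx_0)=\bu^*$ and averaging yields
\[
\mR(\mG_{\hat\theta})-\mR(\mG^*)\le \bL^J\,\mathbb{E}_{B,\bx_0}\bigl\|\mG_{\hat\theta}(B,\bx_0)-\bu^*(\cdot;B,\bx_0)\bigr\|_\infty\le \bL^J\!\!\sup_{t,B,\bx_0}\bigl\|\mG_{\hat\theta}(B,\bx_0)(t)-\bu^*(t;B,\bx_0)\bigr\|_\infty,
\]
so it suffices to exhibit $\mG_{\hat\theta}$ in the class $\mathscr{G}$ with uniform control error $O(\epsilon)$; the factor $\bL^J$ is then folded into $\widetilde{C}_{\rm III}$.

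Next I would treat $\mG^*:(t,B,\bx_0)\mapsto\bu^*$ as an $\mathcal{H}^{s,\alpha}$ function on the product input manifold $[0,T]\times\mX\times\mM$ and build its approximant by the standard atlas construction underlying the manifold-approximation results cited in Section~\ref{sec:thm}. Since $\mX$ and $\mM$ have reach $\tau>0$, they carry a finite atlas of tangent-plane projections; I would assemble a ReLU partition of unity subordinate to this cover from $\operatorname{ReLU}$ bump functions, express $\bu^*$ in each local chart, approximate it there by a Yarotsky-type ReLU network, and recombine the localized pieces. On each Euclidean coordinate patch the relevant dimension is the combined intrinsic dimension $d+k$ of the two manifolds (the bounded one-dimensional time variable is appended to the patch and does not change the leading exponent), so each local approximant needs width $\widetilde{O}(\epsilon^{-(d+k)/(s+\alpha)})$ and depth $\widetilde{O}(\log\tfrac1\epsilon)$, matching the stated $p$ and $K$; the linear tangent projections from the ambient $\mathbb{R}^D$ (for $\bx_0$) and from the $k$ environment coordinates account for the additive $(D+k)\log\tfrac1\epsilon$ term in $K$, while the reach and the boundedness constant $E$ of Assumption~\ref{assump:low_d} control the weight bound $\kappa=O(\max\{1,E,\sqrt{d+k},\tau^2\})$. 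The $q$-dimensional output is obtained by running this construction componentwise, which only inflates constants by a factor $q$.

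The hard part will be making the environment input $B$ usable by this finite-dimensional machinery, since $B$ nominally lives in the infinite-dimensional space $C([0,T]\times\Omega)$ where ambient tangent-plane projections are not available. My plan is to introduce a finite encoding of $B$ — e.g.\ evaluation functionals on a fixed grid, or the intrinsic coordinates furnished by the $k$-dimensional chart structure of $\mM$ — and then verify two things: (i) the encoded image of $\mM$ is a $k$-dimensional submanifold of a finite-dimensional Euclidean space with reach bounded below in terms of $\tau$, so the atlas/partition-of-unity construction of the previous paragraph applies verbatim; and (ii) the composite map $(t,\operatorname{enc}(B),\bx_0)\mapsto\bu^*$ retains its $\mathcal{H}^{s,\alpha}$ regularity, so that the local Yarotsky rate is preserved. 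Establishing this regularity transfer through the encoding, together with using the uniform bound $E$ to guarantee compactness of the encoded domain, is the delicate step; once it is secured, the remaining work — counting nonzero weights, tracking width and depth, and collecting the Lipschitz and geometric constants into $\widetilde{C}_{\rm III}$ — is routine bookkeeping.
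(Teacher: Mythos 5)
Your proposal is correct and follows essentially the same route as the paper: reduce the population-risk gap to a uniform control-approximation error via Lipschitz continuity and the ODE-stability estimate (the paper re-derives this bound term by term inside the proof using Lemma~\ref{lemma:ODE_controlPerturb}, whereas you invoke the consolidated $\bL^J$ estimate, which is the same mathematics), and then produce the ReLU approximant on the low-dimensional product manifold with the stated depth/width/sparsity --- the paper simply cites Theorem~1 of \cite{chen2022nonparametric} componentwise for the $q$ outputs, while you sketch the atlas/partition-of-unity construction underlying that cited result. The finite-encoding subtlety for the infinite-dimensional input $B$ that you flag as the delicate step is in fact glossed over by the paper, which applies the Euclidean-ambient approximation theorem to $\mM \subset C([0,T]\times\Omega)$ directly, so your treatment is, if anything, more careful on that point.
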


\begin{proof}
Theorem~1 in \cite{chen2022nonparametric} provides an approximation error estimate for scalar functions using ReLU networks. Our target, however, is a $q$-dimensional function,
\[
\mG^*(B,\bx_0)(t) = \bigl(g_1^*(B,\bx_0,t), \ldots, g_q^*(B,\bx_0,t)\bigr)^\top \in \mathbb{R}^q.
\]
Since each component $g_i^*$ can be approximated by a ReLU network in $\mathscr{F}(R,\kappa,l,p,K)$, we may use $q$ such networks to construct a vector-valued network in $\mathscr{G}(q, R,\kappa,l,p,K)$. Therefore, for any $\epsilon \in (0,1)$, there exists $\mG_{\hat\theta} \in \mathscr{G}(q, R,\kappa,l,p,K)$ such that each component satisfies
\[
\| g_i^*(B,\bx_0,t) - g_{\hat\theta,i}(B,\bx_0,t) \|_{\infty} \le \epsilon,
\qquad i = 1,\ldots,q.
\]
Consequently,
\begin{equation}
    \|\mG_{\hat\theta}(B,\bx_0)(t) - \mG^*(B,\bx_0)(t)\|_{\infty,\infty}
    = \max_{1\le i\le q} 
    \| g_{\hat\theta,i}(B,\bx_0,t) - g_i^*(B,\bx_0,t) \|_{\infty}
    \le \epsilon.
\end{equation}
And such network has at most $K \leq c_2 (\epsilon^{-(d+k)/n} \log \frac{1}{\epsilon} + D\log \frac{1}{\epsilon} + D \log D)$ neurons and weight parameters. Assume that 
\begin{equation}
\{\bx_{\hat{\theta}}(t)~|~ t\in [0,T]\} \text{ satisfy} 
\left\{
\begin{aligned}
    \dot{\mathbf{x}}_{\hat{\theta}}(t) &= \bdf(t, \mathbf{x}_{\hat{\theta}}(t); \mG_{\hat\theta}(B,\bx_0)(t)), \quad t \in [0, T] \\
    \mathbf{x}_{\hat{\theta}}(0) &= \mathbf{x}_0
\end{aligned}\right.
\end{equation}
and 
\begin{equation}
    \{\bx^*(t)~|~ t\in [0,T]\} \text{ satisfy }0 
\left\{
\begin{aligned}
    \dot{\bx}^*(t) &= \bdf(t, \bx^*(t); \mG^*(B, \bx_0)(t)), \quad t \in [0, T] \\
    \bx^*(0) &= \mathbf{x}_0
\end{aligned}\right.
\end{equation}
Then, for any $\bx_0\sim \mX, B\sim \mM $ we have 
\begin{align}
|J_B(\mathbf{x}^*(t), \mG^*(B,\bx_0))  -& J_B(\bx_{\hat{\theta}}(t), \mG_{\hat\theta}(B,\bx_0))| \\& \leq   \int_0^T |L(t, \bx_{\hat{\theta}}(t), \mG_{\hat\theta}(B,\bx_0), B) - L(t, \bx^*(t), \mG^*(B,\bx_0), B)| dt + |M(\bx(T)) - M(\by(T))|\nonumber \\
& \leq \int_0^T \bL^L_\bx \|\bx_{\hat{\theta}}(t) - \bx^*(t)\|_\infty dt + \int_0^T \bL^L_\bu \epsilon dt + \int_0^T \bL^B \|\bx_{\hat{\theta}}(t) - \bx^*(t)\|_\infty dt  + \bL^M\|\bx_{\hat{\theta}}(T) - \bx^*(T)\|_\infty \nonumber \\
&\leq \int_0^T \bL^L_\bx  C_\bdf (e^{\bL^\bdf_\bx t} - 1) \epsilon dt + \int_0^T \bL^L_\bu \epsilon dt + \int_0^T \bL^B  C_\bdf (e^{\bL^\bdf_\bx t} - 1) \epsilon dt  + \bL^M  C_\bdf (e^{\bL^\bdf_\bx T} - 1) \epsilon \nonumber \\
& \leq \widetilde{C}_{\rm III} \epsilon ,
\end{align}
where $\widetilde{C}_{\rm III}$ is a constant depends on $\bL^M, \bL^{\bdf},\bL^L,C_{\bdf}$ and $T$. From Jensen's inequality, we have
\begin{eqnarray}
    \mR(\mG_{\hat\theta}) -\mR(\mG^*)= |\mR(\mG_{\hat\theta}) -\mR(\mG^*) | \leq \mathbb{E}_{\bx_0\sim\rho} \mathbb{E}_{B\sim \mu} \Big| J_B(\mathbf{x}(t), \mG^*(B,\bx_0)) - J_B(\mathbf{x}(t), \mG_{\hat\theta}(B,\bx_0))   \Big|  \leq \widetilde{C}_{\rm III} \epsilon.
\end{eqnarray}
\end{proof}

\subsection{Statistic Error Term {\rm I}}
We first bound term {\rm I} in \eqref{eqn:errordecomp0}, which measures the deviation between the population risk and the empirical risk evaluated at the empirical minimizer $\mG_{\Gamma;\theta^*}$. We introduce the following notation. Given a training data pair $(B,\bx_0)$, the network $\mG_{\theta}$ produces a trajectory via neural ODE $\mathcal{T}(B,\bx_0,\mG_\theta) = \{ \bx(t) ~|~ t\in [0,T]\}$ satisfying
\begin{equation}
    \left\{
\begin{aligned}
    \dot{\mathbf{x}}(t) &= \bdf(t, \mathbf{x}(t); \mG_{\theta}(B,\bx_0)(t)), \quad t \in [0, T] \\
    \mathbf{x}(0) &= \mathbf{x}_0
\end{aligned}\right.
\end{equation}
We further define the composed functional $\mF\circ \mT(B,\bx_0,\mG_\theta) := J_B(\bx(t),\mG_\theta(B,\bx_0))$. The covering number estimation of the function class $(\mF\circ \mT)_\theta$ can then be estimated by the following Lemma~\ref{lemma:covering}, which is a direct consequence of Lemma 5 in \cite{chen2022nonparametric} together with Assumption~\ref{assump:Lip_cont}. 
\begin{lemma}[Covering number estimation]\label{lemma:covering}
\label{lemma:convernumber}
For $\mG_\theta\in\mathscr{G}(q, R, \kappa, l, p, K)$, given $\delta>0$, we have the following convering number estimation:
\begin{equation}
\mathcal{N}(\delta,(\mF\circ\mathcal{T})_\theta,\|\cdot\|_{L^\infty, \infty}) \leq \mathcal{N}(\frac{\delta}{\bL^J}, \mG_\theta,\|\cdot\|_{L^\infty(t, B, \bx_0), \infty}) \leq  \left( \frac{ 2 l^{2} (pE + 2) \, \kappa^{l} \, p^{l+1} }{ \delta / \bL^J } \right)^{qK}.
\end{equation}
\end{lemma}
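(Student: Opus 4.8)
The plan is to prove the two inequalities separately, chaining a Lipschitz push-forward estimate with the scalar ReLU covering-number bound from the cited reference. First I would establish the left inequality, which reduces covering the composed functional class to covering the network class. Here the key input is the Lipschitz consequence of Assumption~\ref{assump:Lip_cont} stated just after it, namely $\sup_{B,\bx_0}|J_B(\bx(t),\bu)-J_B(\bx'(t),\bu')|\le \bL^J\|\bu-\bu'\|_{\infty}$, in which the implicit dependence of the trajectories $\bx(t),\bx'(t)$ on their respective controls has already been absorbed through Lemma~\ref{lemma:ODE_controlPerturb}. Reading $\bu=\mG_\theta(B,\bx_0)$ and $\bu'=\mG_{\theta'}(B,\bx_0)$, this exactly says that the assignment $\mG_\theta\mapsto \mF\circ\mathcal{T}(\cdot,\cdot,\mG_\theta)$ is $\bL^J$-Lipschitz from $(\mathscr{G},\|\cdot\|_{L^\infty(t,B,\bx_0),\infty})$ into $((\mF\circ\mathcal{T})_\theta,\|\cdot\|_{L^\infty,\infty})$. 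Since the Lipschitz image of a $\delta/\bL^J$-net is a $\delta$-net at the scaled radius, the image of any covering of $\mG_\theta$ at scale $\delta/\bL^J$ is a covering of $(\mF\circ\mathcal{T})_\theta$ at scale $\delta$, yielding $\mathcal{N}(\delta,(\mF\circ\mathcal{T})_\theta,\|\cdot\|_{L^\infty,\infty})\le \mathcal{N}(\delta/\bL^J,\mG_\theta,\|\cdot\|_{L^\infty(t,B,\bx_0),\infty})$.

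For the right inequality I would invoke Lemma 5 of \cite{chen2022nonparametric}, which bounds the covering number of the scalar class $\mathscr{F}(R,\kappa,l,p,K)$ by $(2l^2(pE+2)\kappa^l p^{l+1}/\eta)^{K}$ in the sup norm, where the input bound $E$ is precisely the one supplied by Assumption~\ref{assump:low_d} (with $t\in[0,T]$ and $|(\bx_0)_j|,|B(\bx)|\le E$, so the full network input $(t,B,\bx_0)$ is bounded). To pass to the vector-valued class $\mathscr{G}(q,R,\kappa,l,p,K)$, I would use a product net over the $q$ components: choosing an $\eta$-cover for each component $g_i\in\mathscr{F}$ and taking all tuples produces a cover of $\mathscr{G}$ in the $\|\cdot\|_{\infty,\infty}$ norm of size at most $[\mathcal{N}(\eta,\mathscr{F})]^{q}$, because $\|\mG-\mG'\|_{\infty,\infty}=\max_{1\le i\le q}\|g_i-g_i'\|_{\infty}$ so matching each component within $\eta$ matches the vector within $\eta$. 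Setting $\eta=\delta/\bL^J$ then lifts the exponent from $K$ to $qK$ and gives the claimed bound.

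Given these two ingredients, the argument is essentially bookkeeping, so the main points requiring care are conceptual rather than computational. The first is confirming that the trajectory dependence inside $\mF\circ\mathcal{T}$ contributes no term beyond $\bL^J$; this is guaranteed because $\bL^J$ was constructed to already incorporate the ODE-stability constant $C_{\bdf}(e^{\bL^\bdf_\bx T}-1)$ from Lemma~\ref{lemma:ODE_controlPerturb} together with the running- and terminal-cost Lipschitz constants. The second is verifying that the hypotheses of the cited Lemma 5 (bounded weights $\kappa$, sparsity budget $K$, and bounded input $E$) genuinely hold for our network class $\mathscr{G}(q,R,\kappa,l,p,K)$, and that the $q$ independent components correctly raise the exponent to $qK$; these are the only steps where a misstatement would break the final complexity estimate used in Theorem~\ref{thm:gen}.
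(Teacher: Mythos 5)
Your proposal is correct and follows exactly the route the paper intends: the paper states this lemma as a direct consequence of Lemma~5 in \cite{chen2022nonparametric} together with the Lipschitz constant $\bL^J$ (which already absorbs the trajectory sensitivity via Lemma~\ref{lemma:ODE_controlPerturb}), and your two steps---the $\bL^J$-Lipschitz push-forward of a $\delta/\bL^J$-net, and the component-wise product cover raising the exponent from $K$ to $qK$---are precisely the omitted bookkeeping.
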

Combining this estimate with Assumption~\ref{assump:bound_cost}, we obtain Lemma~\ref{lemma:term_I}, which establishes a bound for the statistical error term~{\rm I} in the decomposition~\eqref{eqn:errordecomp0}.
\begin{lemma}[Estimation of error term {\rm I} in \eqref{eqn:errordecomp0}]\label{lemma:stat_I}\label{lemma:term_I}
For $\mG_\theta\in\mathscr{G}(q, R, \kappa, l, p, K)$, under the \cref{assump:bound_cost}, that is $|J_B(\bx(t),\mG_\theta(B,\bx_0))| \leq R^J$ uniformly, given a training set $\Gamma = \{(B^i,\bx_0^i)\}_{i=1}^n \in \mX\times \mM$, we have
    \begin{equation}
    \mathbb{E}_\Gamma (\mR(\mG_{\Gamma;\theta^*}) - \mR_\Gamma(\mG_{\Gamma;\theta^*})) \leq \frac{R^J}{\sqrt{n}}  \sqrt{qK\log(\frac{2 l^2 (pE + 2) \, \kappa^{l} \, p^{l+1} \bL^J \sqrt{n}}{R})}.
\end{equation}
Further, for any $\epsilon > 0$, let $l = \widetilde{O}(\log \frac{1}{\epsilon})$, $p=\widetilde{O}(\epsilon^{-\frac{d+k}{\alpha+s}})$, $K = \widetilde{O}\!\left(\epsilon^{-\tfrac{d+k}{s+\alpha}} \log \tfrac{1}{\epsilon} + D \log \tfrac{1}{\epsilon}\right)$ and $\kappa = O(\max \{ 1, E, \sqrt{d+k}, \tau^2 \})$, we then have 
\begin{equation}
    \mathbb{E}_\Gamma (\mR(\mG_{\Gamma;\theta^*}) - \mR_\Gamma(\mG_{\Gamma;\theta^*})) \leq 
\frac{\widetilde{C}_{\rm I}}{\sqrt{n}} \sqrt{(\epsilon^{-\tfrac{d+k}{s+\alpha}} + D) \log^3 \frac{1}{\epsilon}},
\end{equation}
Where $\widetilde{C}_{\rm I}$ is a constant depends on  $R^J, E, \bL^J, \log D, d, k, \tau$ and $s$.
\end{lemma}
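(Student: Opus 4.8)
The plan is to bound term~I by a uniform deviation over the whole hypothesis class and then convert that into a covering-number estimate, since the data-dependence of $\mG_{\Gamma;\theta^*}$ rules out a direct concentration argument. Working throughout with the composed functional $\mF\circ\mT$, so that $\mR(\mG_\theta)=\mathbb{E}[(\mF\circ\mT)(B,\bx_0,\mG_\theta)]$ and $\mR_\Gamma(\mG_\theta)$ is its empirical average over $\Gamma$, I would first pass to the supremum
\[
\mathbb{E}_\Gamma\bigl(\mR(\mG_{\Gamma;\theta^*})-\mR_\Gamma(\mG_{\Gamma;\theta^*})\bigr)
\le \mathbb{E}_\Gamma\sup_{\mG_\theta\in\mathscr{G}}\bigl(\mR(\mG_\theta)-\mR_\Gamma(\mG_\theta)\bigr).
\]
Assumption~\ref{assump:bound_cost} ensures each summand lies in $[-R^J,R^J]$, which is exactly what the concentration step requires, and for each fixed $\mG_\theta$ the difference $\mR(\mG_\theta)-\mR_\Gamma(\mG_\theta)$ is a centered average of $n$ i.i.d.\ bounded terms, hence sub-Gaussian with variance proxy $(R^J)^2/n$.

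Next I would discretize at a single scale $\delta$ using the covering bound of Lemma~\ref{lemma:covering}. Taking a minimal $\delta$-cover $\{f_1,\dots,f_N\}$ of $(\mF\circ\mT)_\theta$ in $\|\cdot\|_{L^\infty,\infty}$ with $N=\mathcal{N}(\delta)$, any $\mG_\theta$ has $\mF\circ\mT$ within $\delta$ of some center, so $\sup_{\mG_\theta}(\mR-\mR_\Gamma)\le \max_j(\mR(f_j)-\mR_\Gamma(f_j))+2\delta$; applying the sub-Gaussian maximal inequality to the finite family yields
\[
\mathbb{E}_\Gamma\sup_{\mG_\theta}\bigl(\mR(\mG_\theta)-\mR_\Gamma(\mG_\theta)\bigr)
\le 2\delta + R^J\sqrt{\tfrac{2\log\mathcal{N}(\delta)}{n}}.
\]
The decisive step is to set the resolution $\delta = R/\sqrt{n}$: this makes the discretization contribution lower order, while substituting $\log\mathcal{N}(\delta)\le qK\log\!\bigl(2l^2(pE+2)\kappa^l p^{l+1}\bL^J/\delta\bigr)$ reproduces precisely the logarithm $\log\!\bigl(2l^2(pE+2)\kappa^l p^{l+1}\bL^J\sqrt{n}/R\bigr)$ in the first displayed inequality, giving the stated bound after the absolute constants are absorbed.

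Finally I would substitute the prescribed dimensions $l=\widetilde{O}(\log\tfrac1\epsilon)$, $p=\widetilde{O}(\epsilon^{-(d+k)/(\alpha+s)})$, $K=\widetilde{O}(\epsilon^{-(d+k)/(s+\alpha)}\log\tfrac1\epsilon + D\log\tfrac1\epsilon)$, and $\kappa=O(\max\{1,E,\sqrt{d+k},\tau^2\})$, and count powers of $\log\tfrac1\epsilon$. The prefactor $qK$ contributes $\epsilon^{-(d+k)/(s+\alpha)}+D$ up to one factor of $\log\tfrac1\epsilon$, while the logarithm inside the root is of order $l\log(\kappa p)\sim\log^2\tfrac1\epsilon$, so the product under the square root is $(\epsilon^{-(d+k)/(s+\alpha)}+D)\log^3\tfrac1\epsilon$, yielding the second bound with $\widetilde{C}_{\rm I}$ collecting $R^J,E,\bL^J,\log D$ and the geometric constants $d,k,\tau,s$.

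I expect the main obstacle to be not any single hard estimate—the covering number is already supplied by Lemma~\ref{lemma:covering} and the maximal inequality is routine—but the careful bookkeeping that keeps every dependence explicit: verifying that the $\|\cdot\|_{L^\infty,\infty}$ cover over the entire domain $(t,B,\bx_0)$ genuinely dominates the deviation on the sample, confirming that the discretization term at $\delta=R/\sqrt{n}$ is absorbed rather than dominant (which needs $qK\log(\cdots)\gtrsim 1$), and tracking the logarithmic factors so that they collapse to exactly $\log^3\tfrac1\epsilon$ and no larger power. A secondary subtlety is ensuring that $\kappa$ can be taken at the stated scale $O(\max\{1,E,\sqrt{d+k},\tau^2\})$ consistently with the covering-number constant, so that $\kappa$ contributes only through the benign factor $\kappa^l$ inside the logarithm and does not leak extra dimension dependence into the final rate.
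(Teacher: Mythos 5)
Your proposal is correct and arrives at the same bound, but it replaces the paper's concentration machinery with a more elementary one. The paper bounds the supremum deviation via symmetrization (Lemma 26.2 of Shalev-Shwartz--Ben-David) to reduce to the Rademacher complexity of $\mF\circ\mT$, and then invokes Dudley's entropy integral, optimizing over the lower truncation $\sigma$ and setting $\sigma = R^J/\sqrt{n}$; you instead discretize at the single scale $\delta = R/\sqrt{n}$ and apply the sub-Gaussian maximal inequality to the finite cover, using that each $\mR(f_j)-\mR_\Gamma(f_j)$ is an average of $n$ i.i.d.\ terms in $[-R^J,R^J]$. Both routes consume exactly the same input, namely the covering estimate of Lemma~\ref{lemma:covering}, and they give identical rates here because $\log\mathcal{N}(\delta)$ depends on $\delta$ only logarithmically, so chaining over multiple scales buys nothing over a one-scale net: Dudley's integral $\int_\sigma^{R^J}\sqrt{\log\mathcal{N}(\delta)}\,\rd\delta$ is within a constant of $R^J\sqrt{\log\mathcal{N}(\sigma)}$ for this class. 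Your version is arguably cleaner (it avoids symmetrization entirely and produces the additive $2\delta = 2R/\sqrt{n}$ term explicitly, mirroring the ``$+2$'' in the paper's intermediate display), while the paper's Rademacher--Dudley route is the more standard template and would remain sharp if the entropy grew polynomially in $1/\delta$. Two bookkeeping points you flagged are indeed the only delicate ones, and you resolve them the same way the paper does: the $L^\infty$ cover over all of $(t,B,\bx_0)$ dominates both the population and empirical averages simultaneously, and the residual $\log\sqrt{n}$ inside the logarithm is absorbed into the $\log^3\frac{1}{\epsilon}$ factor, which is legitimate only because $\epsilon$ is later chosen as a negative power of $n$ (the paper makes the same implicit absorption in \eqref{eqn:log_1st_term}). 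Note also that the mismatch between $R$ and $R^J$ inside the logarithm of the first displayed bound is present in the paper itself, so your choice $\delta = R/\sqrt{n}$ versus the paper's $\sigma = R^J/\sqrt{n}$ is immaterial up to constants.
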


\begin{proof}
    The error term {\rm I} in \eqref{eqn:errordecomp0} can be bounded by 
\[ \mathbb{E}_\Gamma (\mR(\mG_{\Gamma;\theta^*}) - \mR_\Gamma(\mG_{\Gamma;\theta^*})) \leq \mathbb{E}_\Gamma \sup_{\mG_\theta\in \mathscr{G}}(\mR(\mG_{\theta}) - \mR_\Gamma(\mG_{\theta})).  \]
Based on Lemma 26.2 in \cite{shalev2014understanding}, we have
\[  \mathbb{E}_\Gamma \sup_{\mG_\theta\in \mathscr{G}}(\mR(\mG_{\theta}) - \mR_\Gamma(\mG_{\theta})) \leq 2\mathbb{E}_\Gamma [\mathrm{Rad}(\mF\circ\mT\circ\Gamma )].  \] 
Given a training set $\Gamma = \{(B^i,\bx_0^i)\}_{i=1}^n \in \mX\times \mM$, we define the Rademacher complexity of $\mF\circ\mT$ with respect to the training data set $\Gamma$ as
\begin{equation}\label{eqn:Rademarcher}
    \mathrm{Rad}(\mF\circ\mT\circ\Gamma) :=\frac{1}{n} \mathbb{E}_{\xi\sim \{\pm 1 \}^n } \left[\sup_{\mG_\theta\in\mathscr{G}} \sum_{i=1}^n \xi_i J_{B^i}(\bx^i(t),\mG_\theta(\bx_0^i,B^i))  \right].
\end{equation}
By Dudley entropy integral \cite{dudley1967sizes}, we have
\begin{equation}\label{eq:RADbound}
        \mathbb{E}_\Gamma [\mathrm{Rad}(\mF\circ\mT\circ\Gamma )] \leq \inf_{ \sigma>0}\left(2\sigma+\frac{12}{\sqrt{n}}\int_{\sigma}^{R^J}\sqrt{\log\mathcal{N}(\delta,\mF\circ\mathcal{T},\|\cdot\|_{L^\infty})}\text{ }d\delta\right).
\end{equation}
By \cref{lemma:covering}, we have
\begin{align}
    \mathbb{E}_\Gamma [\mathrm{Rad}(\mF\circ\mT\circ\Gamma )] & \leq \inf_{ \sigma>0}\left(2\sigma+\frac{12}{\sqrt{n}}\int_{\sigma}^{R^J}\sqrt{\log\mathcal{N}(\delta,\mF\circ\mathcal{T},\|\cdot\|_{L^\infty})}\text{ }d\delta\right) \\
    & 
    \leq  \inf_{ \sigma>0}\left(2\sigma+\frac{12}{\sqrt{n}}\int_{\sigma}^{R^J}\sqrt{qK \log \left( \frac{ 2 l^{2} (pE + 2) \, \kappa^{l} \, p^{L+1} \bL^J}{ \delta } \right)}\text{ }d\delta\right) \\
    & \leq \inf_{ \sigma>0}\left(2\sigma+\frac{12}{\sqrt{n}} R^J \sqrt{qK \log \left( \frac{ 2 l^{2} (pE + 2) \, \kappa^{l} \, p^{l+1} \bL^J}{ \sigma} \right)}\right) \label{ieq:1} \\
    & \leq \frac{R^J}{\sqrt{n}} \left( 12  \sqrt{qK\log(\frac{2 l^{2} (pE + 2) \, \kappa^{l} \, p^{l+1} \bL^J\sqrt{n}}{R})} + 2 \right).\label{ieq:2}
\end{align}
Note \eqref{ieq:2} is obtained by using $\sigma = \frac{R^J}{\sqrt{n}}$ in \eqref{ieq:1}. Since $l = \widetilde{O}(\log \frac{1}{\epsilon})$, $p=\widetilde{O}(\epsilon^{-\frac{d}{\alpha+s}})$ and $K = \widetilde{O}\!\left(\epsilon^{-\tfrac{d}{s+\alpha}} \log \tfrac{1}{\epsilon} + D \log \tfrac{1}{\epsilon}\right)$, we have
\[
\log l = \widetilde{O}(\log \log \frac{1}{\epsilon}), ~ \log(pE + 2) = \widetilde{O}(\log \frac{1}{\epsilon}), ~ l \log \kappa = \widetilde{O}(\log \frac{1}{\epsilon} \log \kappa),~ (l+1) \log p = \widetilde{O}(\log^2 \frac{1}{\epsilon}),
\]
which implies
\begin{align}
    \log(\frac{2 l^{2} (pE + 2) \, \kappa^{l} \, p^{l+1} \bL^J\sqrt{n}}{R^J}) &=  2\log l+\log(pE+2)+l\log \kappa + (l+1)\log p + \log \sqrt{n} + \log \frac{\bL^J}{R^J}, \nonumber \\
    &= \widetilde{O}(\log n + \log^2 \frac{1}{\epsilon}), \label{eqn:log_1st_term}
\end{align}
and
\begin{equation}\label{eqn:log_2nd_term}
    {K\log(\frac{2 l^{2} (pE + 2) \, \kappa^{l} \, p^{l+1} \bL^J\sqrt{n}}{R^J})} = \widetilde{O}\left(\log \tfrac{1}{\epsilon}(\epsilon^{-\tfrac{d}{s+\alpha}} + D)  (\log n + \log^2 \frac{1}{\epsilon} )\right).
\end{equation} 
By plugging \eqref{eqn:log_1st_term} and \eqref{eqn:log_2nd_term} into \eqref{ieq:2}, we have
\[
\frac{R^J}{\sqrt{n}} \left( 12  \sqrt{qK\log(\frac{2 l^{2} (pE + 2) \, \kappa^{l} \, p^{l+1} \bL^J\sqrt{n}}{R})} + 2 \right) \leq  
\frac{\widetilde{C}_{\rm I}}{\sqrt{n}} \sqrt{(\epsilon^{-\tfrac{d}{s+\alpha}} + D) \log^3 \frac{1}{\epsilon}}
\]
where $\widetilde{C}_{\rm I}$ depends on $q, R^J, E, \bL^J, \log D, d, k, \tau$ and $s$.
\end{proof}

\subsection{Statistic Error Term {\rm II}}
We now turn to the analysis of the statistical error term~{\rm II} in the decomposition~\eqref{eqn:errordecomp0}. Using concentration inequalities, we obtain the following bound.
\begin{lemma}[Statistic error term {\rm II} in \eqref{eqn:errordecomp0}]\label{lemma:stat_II}
\[ \mathbb{E}_\Gamma (\mR(\mG_{\Gamma;\theta^*}) - \mR_\Gamma(\mG_{\Gamma;\theta^*})) \leq (1-\epsilon)\sqrt{ \frac{2\log(1/\epsilon)}{n}} + 2 \epsilon R^J.  \]
\end{lemma}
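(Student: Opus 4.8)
The quantity that actually needs to be controlled here is term~II of the decomposition~\eqref{eqn:errordecomp0}, namely the fluctuation $\mathbb{E}_\Gamma(\mR_\Gamma(\mG_{\hat\theta}) - \mR(\mG_{\hat\theta}))$ of the \emph{fixed} approximator $\mG_{\hat\theta}$ produced by Lemma~\ref{lemma:approx}. The plan is to exploit the single structural fact that separates this term from term~I: since $\mG_{\hat\theta}$ is chosen to approximate the deterministic target $\mG^*$ and is independent of the random sample $\Gamma=\{(B^i,\bx_0^i)\}_{i=1}^n$, no uniform (Rademacher or covering-number) control over the hypothesis class is needed here. Writing $\mR_\Gamma(\mG_{\hat\theta})=\frac{1}{n}\sum_{i=1}^n J_{B^i}(\bx^i(t),\mG_{\hat\theta}(B^i,\bx_0^i))$, the summands are i.i.d.\ because the pairs $(B^i,\bx_0^i)\sim\mu\times\rho$ are drawn independently, each lies in $[-R^J,R^J]$ by Assumption~\ref{assump:bound_cost}, and their common mean is exactly $\mR(\mG_{\hat\theta})$. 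A single scalar concentration inequality therefore suffices.

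First I would record a one-sided tail estimate. Setting $Z:=\mR_\Gamma(\mG_{\hat\theta})-\mR(\mG_{\hat\theta})$ and applying Hoeffding's inequality to the bounded i.i.d.\ terms gives, for every $a>0$,
\[
\mathbb{P}_\Gamma(Z>a)\le \exp\!\left(-\frac{n a^2}{2(R^J)^2}\right).
\]
Choosing $a$ to be the upper $\epsilon$-quantile of $Z$, so that $\mathbb{P}_\Gamma(Z>a)=\epsilon$ for the free parameter $\epsilon\in(0,1)$, the displayed tail bound forces $a \le R^J\sqrt{2\log(1/\epsilon)/n}$, which is the scale $\sqrt{2\log(1/\epsilon)/n}$ appearing in the statement.

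Next I would pass from the tail estimate to a bound on the expectation via a truncation (layer-cake) split at level $a$. Using $Z\,\mathbf{1}_{\{Z\le a\}}\le a\,\mathbf{1}_{\{Z\le a\}}$ pointwise, together with the deterministic bound $Z\le 2R^J$ (both risks lie in $[-R^J,R^J]$),
\[
\mathbb{E}_\Gamma[Z] = \mathbb{E}_\Gamma[Z\,\mathbf{1}_{\{Z\le a\}}] + \mathbb{E}_\Gamma[Z\,\mathbf{1}_{\{Z> a\}}] \le a\,\mathbb{P}_\Gamma(Z\le a) + 2R^J\,\mathbb{P}_\Gamma(Z> a) = (1-\epsilon)\,a + 2\epsilon R^J .
\]
Substituting the calibrated threshold $a \le R^J\sqrt{2\log(1/\epsilon)/n}$ yields exactly the claimed bound $(1-\epsilon)\sqrt{2\log(1/\epsilon)/n} + 2\epsilon R^J$ (absorbing the leading factor $R^J$ into the normalization of $J$).

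There is no deep obstacle in the estimation itself; the step I would be most careful about is conceptual. The point is to recognize that term~II must be handled differently from term~I of Lemma~\ref{lemma:term_I}: term~I involves the \emph{data-dependent} empirical minimizer $\mG_{\Gamma;\theta^*}$ and so genuinely required uniform control over $\mathscr{G}$, whereas term~II is a single fixed hypothesis and collapses to one concentration inequality. Indeed, because the empirical average is unbiased, the signed expectation $\mathbb{E}_\Gamma[Z]$ is identically zero, so the displayed bound is deliberately loose; its value is to provide a clean $\epsilon$-parameterized expression that combines with the approximation estimate of Lemma~\ref{lemma:approx} and the term~I estimate when the three pieces are assembled in~\eqref{eqn:errordecomp0} and the free parameters $\epsilon$ and $n$ are balanced. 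The only real bookkeeping is to track the constant $R^J$ consistently through the Hoeffding normalization so that both contributions of the final bound carry the correct dependence on it.
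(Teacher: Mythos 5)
Your proof is correct and follows essentially the same route as the paper's: Hoeffding's inequality applied to the bounded i.i.d.\ summands of $\mR_\Gamma(\mG_{\hat\theta})$ for the fixed, sample-independent approximator $\mG_{\hat\theta}$, followed by splitting the expectation at the upper $\epsilon$-tail to obtain $(1-\epsilon)a + 2\epsilon R^J$ with $a \le R^J\sqrt{2\log(1/\epsilon)/n}$. Your rendition is in fact the cleaner one: the paper's proof opens with a superfluous bound by $\sup_{\mG_\theta\in\mathscr{G}}(\mR(\mG_\theta)-\mR_\Gamma(\mG_\theta))$, which single-hypothesis Hoeffding cannot control and which is unnecessary since term~II concerns only the fixed $\mG_{\hat\theta}$ (the lemma's left-hand side as printed, with $\mG_{\Gamma;\theta^*}$, is a misprint for $\mathbb{E}_\Gamma(\mR_\Gamma(\mG_{\hat\theta})-\mR(\mG_{\hat\theta}))$); you also correctly note both that the first term of the final bound should carry the factor $R^J$ (an omission shared by the paper's statement and proof) and that $\mathbb{E}_\Gamma[Z]=0$ exactly by unbiasedness, so the stated bound is loose by design.
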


\begin{proof}
Note that 
\[ \mathbb{E}_\Gamma (\mR(\mG_{\Gamma;\theta^*}) - \mR_\Gamma(\mG_{\Gamma;\theta^*})) \leq \mathbb{E}_\Gamma \sup_{\mG_\theta\in \mathscr{G}}(\mR(\mG_{\theta}) - \mR_\Gamma(\mG_{\theta})).  \]
Based on Hoeffding's inequality, since $|J_B(\bx_0,\mG_\theta(B,\bx_0))| \leq R^J$ uniformly, we have, for any $t>0$
\begin{eqnarray}
    \mathbb{P}(\mR_\Gamma(\mG_{\hat\theta})- \mR(\mG_{\hat\theta}) \geq t )  \leq \displaystyle e^{-\frac{ t^2n}{2{(R^J)}^2}}.
\end{eqnarray}
Let $\epsilon = e^{-\frac{ t^2n}{2{(R^J)}^2}}$, we have that with probability $1-\epsilon $,
\begin{equation}
    \mR_\Gamma(\mG_{\hat\theta})- \mR(\mG_{\hat\theta}) \leq R^J \sqrt{ \frac{2\log(1/\epsilon)}{n}}.
\end{equation}
Therefore, we have
\begin{equation}\label{ieq:}
    \mathbb{E}_\Gamma [\mR_\Gamma(\mG_{\hat\theta})- \mR(\mG_{\hat\theta}) ] \leq  (1-\epsilon)\sqrt{ \frac{2\log(1/\epsilon)}{n}} +  \epsilon (2R^J).
\end{equation}
\end{proof}

\subsection{Proof of \Cref{thm:gen}}
\begin{proof}
To prove the theorem, we first decompose the generalization error into three terms. Given a training set $\Gamma = \{(B^i,\bx_0^i)\}_{i=1}^n \in \mM\times \mX$, note that the empirical risk of the optimal operator in $\mathscr{G}$ using sample $\Gamma$ satisfies $\mR_\Gamma(\mG_{\Gamma;\theta^*}) = \min_{\mG_\theta\in\mathscr{G}} \mR_\Gamma(\mG) \leq  \mR_\Gamma(\mG_{\hat\theta})$, we then decompose the generalization error as:
\begin{eqnarray}
        0 \leq \mR(\mG_{\Gamma;\theta^*}) - \mR(\mG^*) 
     &\leq &     \mR(\mG_{\Gamma;\theta^*}) - \mR_\Gamma(\mG_{\Gamma;\theta^*}) + 
    \mR_\Gamma(\mG_{\hat\theta})- \mR(\mG_{\hat\theta})   + 
     \mR(\mG_{\hat\theta}) - \mR(\mG^*) 
\end{eqnarray}
Therefore, we have
\begin{eqnarray}\label{eqn:errordecomp_eps}
   \mathbb{E}_\Gamma (\mR(\mG^*) - \mR(\mG_{\Gamma;\theta^*})) \leq 
    \underbrace{\mathbb{E}_\Gamma(\mR(\mG_{\Gamma;\theta^*}) - \mR_\Gamma(\mG_{\Gamma;\theta^*}))}_{{\rm I: statistic error}} + 
    \underbrace{\mathbb{E}_\Gamma (\mR_\Gamma(\mG_{\hat\theta})- \mR(\mG_{\hat\theta}))}_{{\rm II: statistic error}} + 
        \underbrace{(\mR(\mG_{\hat\theta}) - \mR(\mG^*))}_{{\rm III: approximation error}}
\end{eqnarray}

For any $\epsilon \in (0,1)$, from Lemma~\ref{lemma:stat_I}, there exists a neural operator $\mG_{\Gamma;\theta^*}$ in ReLU network class $\mathscr{G}(q, R, \kappa, l, p, K)$ with $l = \widetilde{O}(\log \frac{1}{\epsilon})$, $p=\widetilde{O}(\epsilon^{-\frac{d+k}{\alpha+s}})$, $K = \widetilde{O}\!\left(\epsilon^{-\tfrac{d+k}{s+\alpha}} \log \tfrac{1}{\epsilon} + (D+k) \log \tfrac{1}{\epsilon}\right)$ and $\kappa = O(\max \{ 1, E, \sqrt{d+k}, \tau^2 \})$, such that
\begin{equation}\label{eqn:stat_I_eps} 
\mathbb{E}_\Gamma (\mR(\mG_{\Gamma;\theta^*}) - \mR_\Gamma(\mG_{\Gamma;\theta^*})) \leq 
\frac{\widetilde{C}_{\rm I}}{\sqrt{n}} \sqrt{(\epsilon^{-\tfrac{d+k}{s+\alpha}} + D) \log^3 \frac{1}{\epsilon}}.
\end{equation}
For the same choice of $\epsilon$, Lemma~\ref{lemma:stat_II} yields
\begin{equation}\label{eqn:stat_II_eps} 
\mathbb{E}_\Gamma (\mR(\mG_{\Gamma;\theta^*}) - \mR_\Gamma(\mG_{\Gamma;\theta^*})) \leq (1-\epsilon)\sqrt{ \frac{2\log(1/\epsilon)}{n}} + 2\epsilon R. 
\end{equation}
Moreover, Lemma~\ref{lemma:approx} provides
\begin{equation}\label{eqn:approx_eps}
    \mR(\mG_{\hat\theta}) -  \mR(\mG^*) \leq \widetilde{C}_{\rm III}  \epsilon.
\end{equation}
By plugging \eqref{eqn:stat_I_eps}, \eqref{eqn:stat_II_eps} and \eqref{eqn:approx_eps} into \eqref{eqn:errordecomp_eps}, we obtain
\begin{equation}
        \mathbb{E}_\Gamma [ \mR(\mG_{\Gamma;\theta^*}) - \mR(\mG^*)]  \leq \frac{\widetilde{C}_{\rm I}}{\sqrt{n}} \sqrt{(\epsilon^{-\tfrac{d+k}{s+\alpha}} + D+k) \log^3 \frac{1}{\epsilon}} + (1-\epsilon)\sqrt{ \frac{2\log(1/\epsilon)}{n}} + (\widetilde{C}_{\rm III} + 2R) \epsilon.
\end{equation}

By picking $\displaystyle \epsilon^2 = \frac{1}{n}{\epsilon^{-\tfrac{d+k}{s+\alpha}}}$, i.e. $\epsilon = n^{-\frac{s+\alpha}{d+k+2(s+\alpha)}}$, then the size of the ReLU network becomes $L = \widetilde{O}(\frac{s+\alpha}{d+k+2(s+\alpha)} \log n)$, $p=\widetilde{O}(n^{\frac{d+k}{2(\alpha+s)+d+k}})$ and $K = \widetilde{O}\left(\frac{s+\alpha}{2(s+\alpha)+d+k} \, n^{\tfrac{d+k}{2(s+\alpha)+d+k}} \log n\right)$, and we have
\begin{align*}
    \frac{\widetilde{C}_{\rm I}}{\sqrt{n}} \sqrt{\left(\epsilon^{-\tfrac{d+k}{s+\alpha}} + D+k \right) \log^3 \frac{1}{\epsilon}} & = \widetilde{C}_{\rm I} \sqrt{(\frac{1}{n} \epsilon^{-\tfrac{d+k}{s+\alpha}} + \frac{D+k}{n}) \log^3 \frac{1}{\epsilon}} \\
    & = \widetilde{C}_{\rm I} \sqrt{\left(n^{-\frac{2(s+\alpha)}{d+k+2(s+\alpha)}}+\frac{D+k}{n}\right)\left(\frac{s+\alpha}{d+k+2(s+\alpha)}\right)^3 \log^3n} \\
    & = \widetilde{C}_{\rm I} \left(\frac{s+\alpha}{d+k+2(s+\alpha)}\right)^{\tfrac{3}{2}}   \sqrt{n^{-\frac{2(s+\alpha)}{d+k+2(s+\alpha)}}+\frac{D+k}{n}}  \log^{\tfrac{3}{2}}n.
\end{align*}
Thus
\[
\displaystyle \mathbb{E}_\Gamma [ \mR(\mG_{\Gamma;\theta^*}) - \mR(\mG^*)]  \leq C_1 \sqrt{n^{-\frac{2(s+\alpha)}{d+k+2(s+\alpha)}}+\frac{D+k}{n}}\log^{\frac{3}{2}}n + C_2 \sqrt{\frac{\log n}{n}} + C_3 n^{-\frac{s+\alpha}{d+k+2(s+\alpha)}},
\]
where $C_1 = \widetilde{C}_{\rm I} \left(\frac{s+\alpha}{d+k+2(s+\alpha)}\right)^{\tfrac{3}{2}} $, $C_2 = \sqrt{\frac{2(s+\alpha)}{d+k+2(s+\alpha)}}$ and $C_3=\widetilde{C}_{\rm III} + 2R$.
\end{proof}

\section{Extension to Closed-Loop Control}\label{sec:extension}

Our solution operator approach maps a given initial state and environment function $B(\bx)$ to an \emph{open-loop} optimal control function, which is effective when the environment is static or its dynamics are fully known in advance. However, in many real-world applications, the environment function $B(\bx)$ is not fully known in advance. For instance, consider a robot delivery task in which the robot starts at a designated location and must deliver a package to a target position, while the environment contains unknown obstacles representing people that may appear unexpectedly along its path. The robot has limited vision and can only detect these obstacles once they enter its sensing range; before that moment, it has no knowledge of their existence. As the robot approaches such obstacles, their sudden appearance within its field of view requires an immediate and safe reaction to avoid collisions. Such tasks are therefore fundamentally \emph{closed-loop} optimal control problem in nature: the control policy must continuously adapt to feedback from the actual, evolving environment and the current state.

To handle such scenarios, we extend our approach to a closed-loop setting by integrating the learned solution operator with a Model Predictive Control (MPC) framework \cite{camacho2007constrained,rawlings2020model}. MPC transforms open loop optimal control into closed loop behavior by repeatedly solving a short horizon optimal control problem and applying only the initial segment of the computed control. Although each solve is open loop, the continual re-optimization at every time step induces an implicit feedback mechanism. A limitation of conventional MPC, however, is that even short horizon optimal control problems can be computationally costly to solve in real time. This is precisely where the proposed solution operator framework offers a significant advantage: the learned operator replaces the expensive optimization at each MPC step, enabling rapid closed loop control with substantially reduced computational burden.

First, we reformulate the optimal control problem as a free-terminal-time problem, while keeping the underlying dynamics unchanged.
\begin{equation*}
{ \mathbf{u}^*(t)}, { T^*}  \in  \arg \min_{\bu, T} J_B(\mathbf{x}(t), \bu) = w_1 \int_0^T L_B(t, \bx(t), \bu(t)) \rd t + w_2  M(\bx(T)) + w_3 { T}
\end{equation*}
\begin{equation}\label{eqn:dynamics_2}
\left\{
\begin{aligned}
    \dot{\mathbf{x}}(t) &= \bdf(t, \mathbf{x}(t), \mathbf{u}(t)), \quad t \in [0, T] \\
    \mathbf{x}(0) &= \mathbf{x}_0.
\end{aligned}
\right.
\end{equation}
Here $L_B$ and $M$ are the same running and terminal cost terms as in \eqref{eqn:Loss}, while the terminal time $T>0$ is an additional decision variable. The reason of introducing this additional decision variable $T$ are two-fold. First, in contrast to classical MPC, which repeatedly solves a finite-horizon problem over a fixed time interval, our goal is to endow the algorithm with a global view of the task by learning the full optimal trajectory to the target. Allowing $T$ to be optimized encourages solutions that reflect the overall optimal time and path length for reaching the target, rather than being tied to a fixed finite horizon. Second, a fixed terminal time becomes inappropriate as the state approaches the target during inference, since it forces the predicted control to spread the motion over the entire time horizon, which can lead to unnecessarily slow trajectories. By treating $T$ as a decision variable and penalizing its magnitude in the cost, we naturally promote fast arrival when possible.

In this setting, the solution-operator learning task is a straightforward extension of the original formulation: rather than learning only the optimal control function, the neural operator is now trained to approximate the mapping $\mG: (B, \bx_0, \bx_{T}) \mapsto  (\mathbf{u}^*(t), T^*)$, which maps the obstacle field, initial condition and terminal condition to both the optimal control and the corresponding terminal time. 
After that, we embed the learned operator $\mG_{\Gamma;\tilde{\theta}}$ into a MPC framework using the learned solution operator to predict each short horizon optimal control problem: at each step, we use $\mG_{\Gamma;\tilde{\theta}}$ to map the current state and currently visible obstacles to a control function on this interval, and then obtain the corresponding trajectory by integrating the system dynamics. The procedure is detailed Algorithm~\ref{alg:mpc_inference}. 
To demonstrate the effectiveness of this approach, Section~\ref{sec:MPC} presents numerical results for a unicycle robot navigating a maze-like environment with unknown objects that may appear unexpectedly along the robot's path.

\begin{algorithm}[htp]
\caption{MPC with Learned Solution Operator under Partial Observability}\label{alg:mpc_inference}
\begin{algorithmic}[1]
\Require Learned operator $\mathcal{G}_{\Gamma;\tilde{\theta}^*}$, initial state $\mathbf{x}_0$, target $\mathbf{x}_T$, timestep $\Delta t$, vision radius $R$, tolerance $\delta$
\State Set $k = 0$ and $\mathbf{x}(0) = \mathbf{x}_0$

\While{$\|\mathbf{x}(k\Delta t) - \mathbf{x}_T\| > \delta$}
    \State{Update the current obstacle function, denoted as $B_k(\bx)$}

    \State Obtain control function
    \[
        (\mathbf{u}_k(t),\, T_k) 
        = 
        \mathcal{G}_{\Gamma;\tilde{\theta}^*}\big(B_k,\; \mathbf{x}(k\Delta t), \bx_T \big)
    \]
    
    \State {Evolve the dynamics} according to~\eqref{eqn:uni_dyn} using control $\mathbf{u}_k(t)$ and obtain
    \[
       \mathbf{x}(t), \qquad t \in [k\Delta t,\,(k+1)\Delta t].
    \]

    \State $k \gets k + 1$

\EndWhile
\State Set $K = k$
\State \Return trajectory $\{\mathbf{x}(k\Delta t)\}_{k=0}^{K}$
\end{algorithmic}
\end{algorithm}
We note that this construction that combines neural operator with MPC only produces an approximation to the \emph{closed-loop} control control, and the fundamental convergence issues inherent in MPC still persist and are out of the scope of this paper.

\section{Numerical Experiments}\label{sec:numerics}
In this section, we present three numerical examples to demonstrate the effectiveness of the proposed method. At the same time, these examples serve to numerically validate our scaling law theory: the method cannot perform universally well across problems of varying complexity. As the complexity of the control task increases, it becomes increasingly challenging to learn an effective operator. We begin with stationary and moving Gaussian-bump barriers in both 2D and 3D settings, which illustrate the method’s effectiveness under different environmental variations. Next, we consider a more challenging maze navigation task, where the objective is to solve the maze while avoiding collisions with walls. Then, we extend our approach to the unicycle problem, which features nonlinear dynamics. For this case, we compare the performance of our method with a classical numerical solver based on nonlinear programming (NLP), showing that our approach achieves comparable cost minimization while providing nearly instantaneous inference for batch predictions. Finally, we examine the scaling laws of the stationary Gaussian-bump barrier and the maze navigation problem, through which we further verify the “price to pay” argument.

\textbf{Networks Architecture.} There are various approaches to approximating the solution operator $\mG^*$. For the purpose of theoretical error analysis, we model $\mG^*$ using deep ReLU networks. Since the  function $B$ is an infinite-dimensional function in $C(\Omega)$, it is intractable to handle it directly. In practice, one approximates $B$ by mapping it into a finite-dimensional ambient representation, for example through discretization or an encoder. For simplicity, in this paper we focus on the case that the barrier distribution $\mu$ is supported on a $k$-dimensional manifold $\mM \subset C(\Omega)$, where $k$ is the intrinsic dimension and usually much smaller than the ambient dimension. This simplified setting highlights the key message of our paper, as shown in~\Cref{thm:gen}.
For the sake of clarity and analytical simplicity, we focus on stationary functions represented by Gaussian mixtures. The set of such barrier functions lies on a $k$-dimensional manifold, where $k$ corresponds to the number of free parameters in the Gaussian mixture. Specifically, let $N_b$ denote the number of mixture Gaussian bumps. Then the barrier function can be parameterized as
\[
B (\bx) = \sum_{j=1}^{N_b} h_j \mN(\bx; \boldsymbol{c}_j,\sigma^2_j I),
\]
where $\boldsymbol{c}_j \in \bR^{D}$ denotes the center of each bumps, $\mN$ denotes the density function of a standard Gaussian distribution, $h_j$ and $\sigma^2_j$ denotes the magnitude and variance. We represent the parameters of each component by the vector $\hat{B}_j:=[\mathbf{c}_j, h_j, \sigma_j^2]$. Consequently, the dimension of  $\mM$ is {$k=(D+2)N_b$}. 

Since the order of the mixture components is arbitrary, the network must be permutation-invariant with respect to the $N_b$ bumps. To enforce this invariance, each component $\hat{B}_k$ is first embedded into a feature vector using a shared smaller ReLU network $g_2$. The resulting features are then aggregated via mean pooling (this step could alternatively be replaced by more sophisticated mechanisms such as self-attention without positional encoding). Finally, the aggregated barrier representation is concatenated with the global variables $(\bx_0, t)$ and passed through another ReLU network $g_1$ to obtain the prediction of the operator at time $t$. In particular, the resulting approximation network for $\mG^*(B, \bx_0)(t)$ takes the form
\begin{equation}\label{eqn:relu_net}
\mG_\theta(B, \bx_0)(t) = g_\theta^l(B, \bx_0, t ) = g^{l_1}_1 (\bx_0, t, \frac{1}{N_b} \sum_{k=1}^{N_b} g^{l_2}_2 (\hat{B}_k)).
\end{equation}
where $l=l_1+l_2$. For the Gaussian mixture and unicycle examples, we typically employ the deep ReLU network architecture defined in \eqref{eqn:relu_net}. For the maze navigation example, to better encode maze configurations, we enhance the model with Fourier random features and an additional multi-head self-attention mechanism. These architectures will be described in detail in the Appendix~\ref{sec:network_maze}.

\textbf{Data Generation.} The training and testing datasets of the Gaussian bumps and the unicycle problem are uniformly sampled from specific distributions, which will be described in detail in their respective sections.  The generation of the maze configurations follows the randomized Depth-First Search (DFS), which is detailed in Algorithm~\ref{alg:maze} in Appendix~\ref{sec:network_maze}.

\textbf{Hyperparameters.} The hyperparameters of the network architecture, including the number of layers and the number of neurons per layer, are summarized in Table~\ref{tab:hyper}. For the coupling dynamics, we use a neural ODE~\cite{chen2018neural} implemented in PyTorch~\cite{paszke2019pytorch}, discretized with the forward Euler scheme with terminal threshold stationary at $\delta=1e-4$. During training, we adopt the Adam optimizer~\cite{kingma2014adam}, with the learning rate $l_r$ and learning rate scheduler specified in Table~\ref{tab:hyper}.

\subsection{Gaussian Bump Barrier}
In this subsection, we numerically study optimal control problems with barrier functions parameterized by Gaussian mixtures. The state domain is $\Omega = [0,1]^D$, and we focus on the cases $D=2$ and $D=3$. Both stationary and moving barriers are considered. The admissible control set is $\mathcal{S} = \mathbb{R}^D$, and the running cost consists of two components: a kinetic energy term and a penalty term. The optimal control problem can then be formulated as:
\begin{equation}\label{eqn:Loss_gauss}
     \bu^*(t; B, \bx_0) \in  \arg \min_{\mathbf{u}} J_B(\mathbf{x}(t), \mathbf{u}) = \int_0^T \left( \frac{1}{2} |\bu(t, \bx(t))|^2 + B(\bx(t)) \right) \rd t +  w_t |\bx(T)|^2
\end{equation}
subject to the following dynamics
\begin{equation}\label{eqn:dynamics_gauss}
\left\{
\begin{aligned}
    \dot{\mathbf{x}}(t) &=  \mathbf{u}(t, \bx(t)), \quad t \in [0, T] \\
    \mathbf{x}(0) &= \mathbf{x}_0 .
\end{aligned}
\right.
\end{equation}
To generate the training set $\Gamma = \{B^i, \bx_0^i\}_{i=1}^n$, we uniformly sample initial states from $\mX$, where the selection of $\mX$ will be specified in each numerical experiment. For \textit{stationary barrier} function parameterized by gaussian mixture, we have
\begin{equation}\label{eqn:gauss}
    B(\bx) = \sum_{j=1}^{N_b} h_j \mN(\bx; \mathbf{c}_j, \sigma_j),
\end{equation}
where $\mathbf{c}_j \in \bR^D$ is the center of $j$-th gaussian bump, $\sigma_i$ is the isotropic standard deviation and $h_i$ is the height of each gaussian bump. Sample the parameters of each Gaussian bump from uniform distributions: $\mathbf{c}_j \sim U([c_1, c_2]^D)$, $\sigma_j \sim U[0.05, 0.1]$ and $h_j \sim U[0.1, 1]$. For \textit{moving barrier}, we consider a simplified setting in which each Gaussian bump undergoes pure linear translation without deformation. In this case, the barrier profile at time $t$ is given by
\begin{equation}\label{eqn:moving_gauss}
    B(t, \bx) = \sum_{j=1}^{N_b} h_j \mN(\bx; \mathbf{c}_j(t), \sigma_j),
\end{equation}
where the center of each bumps move linearly in $t$, $\mathbf{c}_j = \mathbf{c}_j^0 + \mathbf{v}_j t$, with initial state $\mathbf{c}^0_j \sim U([c_1, c_2]^D)$ and velocities $\mathbf{v}^0_j \sim U([v_1, v_2]^D)$.
\subsubsection{2D Stationary Gaussian Bumps}\label{sec:gauss_stationary}

\textbf{Parameter study with $N_b=1$.} First, we let $\mX = \big([0.5,1]\times[0.9,1]\big) \;\cup\; \big([0.9,1]\times[0.5,1]\big)$. We numerically investigate the case with a single Gaussian bump ($N_b=1$). We generate $n=500$ training data pairs as described above and train models separately with $w_t \in \{0.5, 1, 1.5, 2, 2.5 \}$. The trained models are then evaluated under the following scenarios. Across all cases, the optimal trajectory varies in the expected way with changes in the problem parameters.
\begin{itemize}
    \item[1.] \textit{Varying bump strength.} Fix the initial state, the bump center, and the standard deviation, and set $w_t=1$. We vary the bump height $h=0.1, 0.2, \dots, 1$ to study the effect of bump strength. The results are shown in the left panel of Figure~\ref{fig:2d_stationary_1b_decay}.
    \item[2.] \textit{Varying initial states.} With the bump center, standard deviation, and bump strengths stationary, we set $w_t=1$. We test the effect of different initial positions. The results are presented in the middle panel of Figure~\ref{fig:2d_stationary_1b_decay}.
    \item[3.] \textit{Varying terminal weight.} Fix the initial state, bump center position, standard deviation and bump strength. We then study the effect of the terminal cost weight by testing with models trained under different $w_t = 0.5, 1, 1.5, 2, 2.5$. The results are presented in the right panel of Figure~\ref{fig:2d_stationary_1b_decay}.
\end{itemize}
\begin{figure}[htbp]
    \centering \includegraphics[width=0.25\linewidth]{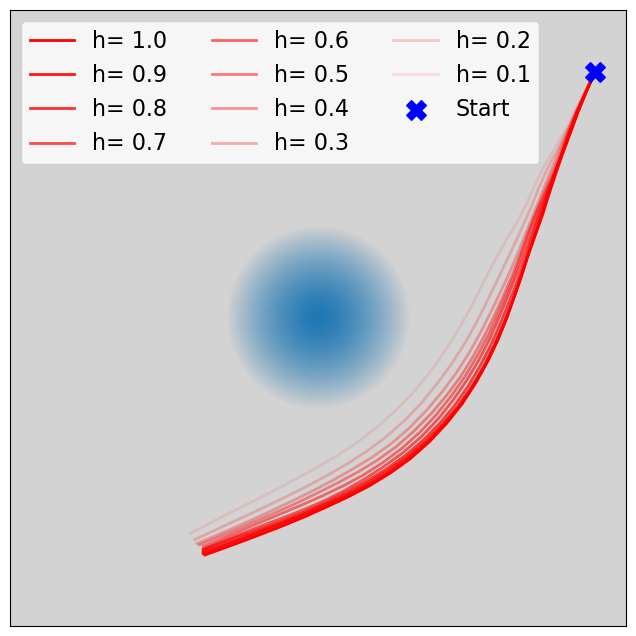}
    \includegraphics[width=0.25\linewidth]{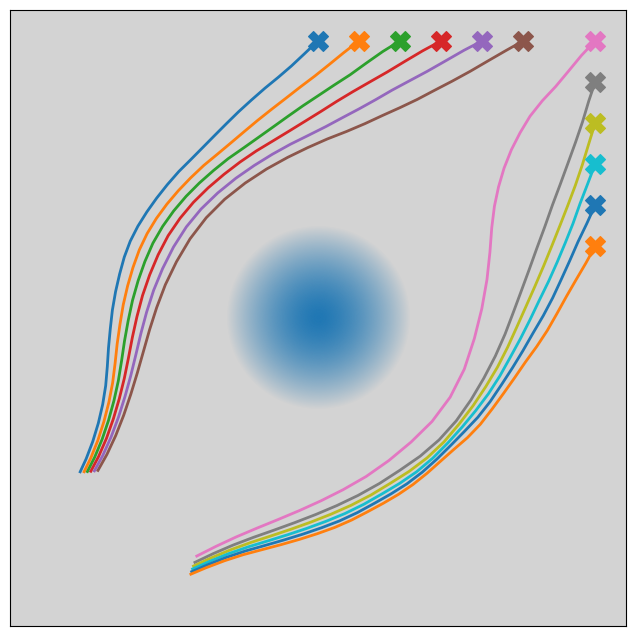}
    \includegraphics[width=0.25\linewidth]{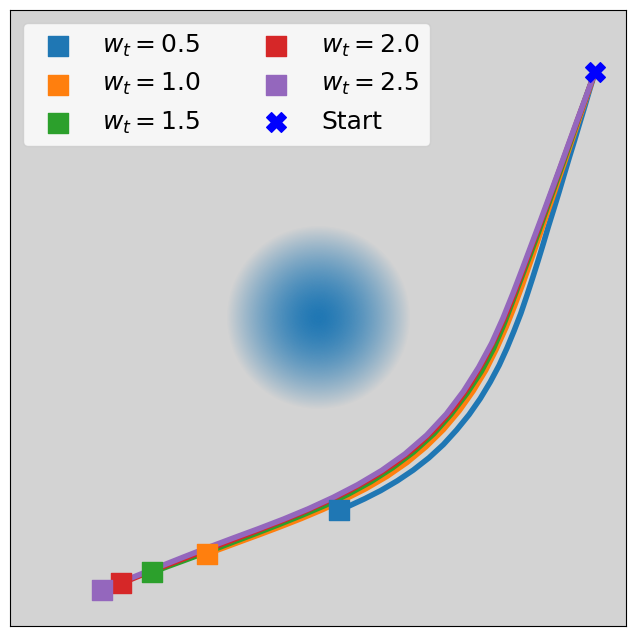}
    \caption{Stationary Gaussian-bump examples. Results are shown for varying bump heights/strengths (left), varying initial positions (middle), and varying terminal weights (right).}
    \label{fig:2d_stationary_1b_decay}
\end{figure}
\textbf{Parameter study on $N_b$.} In this part, we present numerical demonstrations with different numbers of Gaussian bumps, considering the cases $N_b = 2, 3$. Representative instances for these two settings are shown in Figure~\ref{fig:2d_stationary_1} and Figure~\ref{fig:2d_stationary_2}, respectively. As an illustrative example, we further examine how the model behaves when two bumps gradually transition from being fully overlapping to being well separated. This setup highlights how the model adapts its predictions under changing barrier configurations; for example, whether it chooses to avoid both bumps simultaneously when they overlap, or to navigate between them once they separate, and whether it can identify the shortest trajectory that minimizes kinetic energy.


\begin{figure}[htbp]
    \centering
    \includegraphics[width=0.45\linewidth]{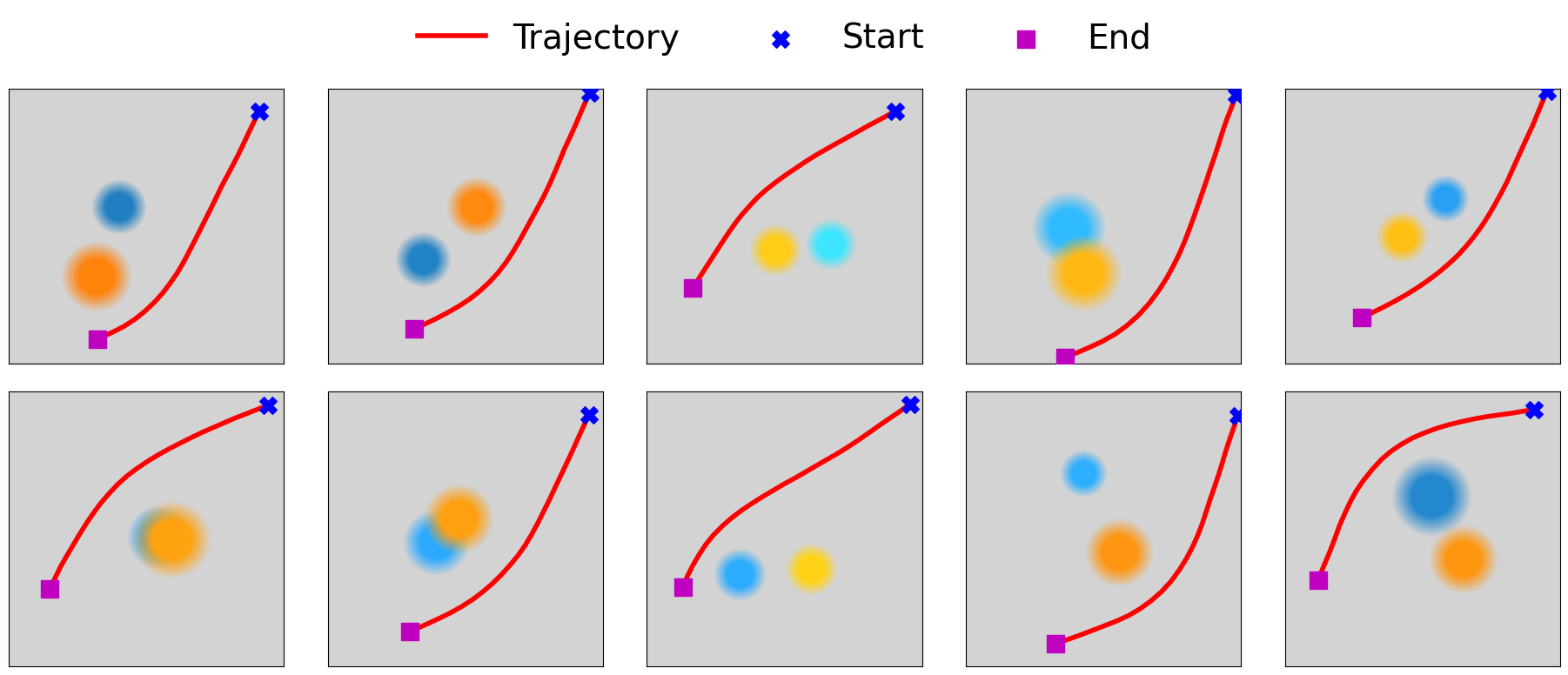}
    \hspace{3mm}
    \includegraphics[width=0.45\linewidth]{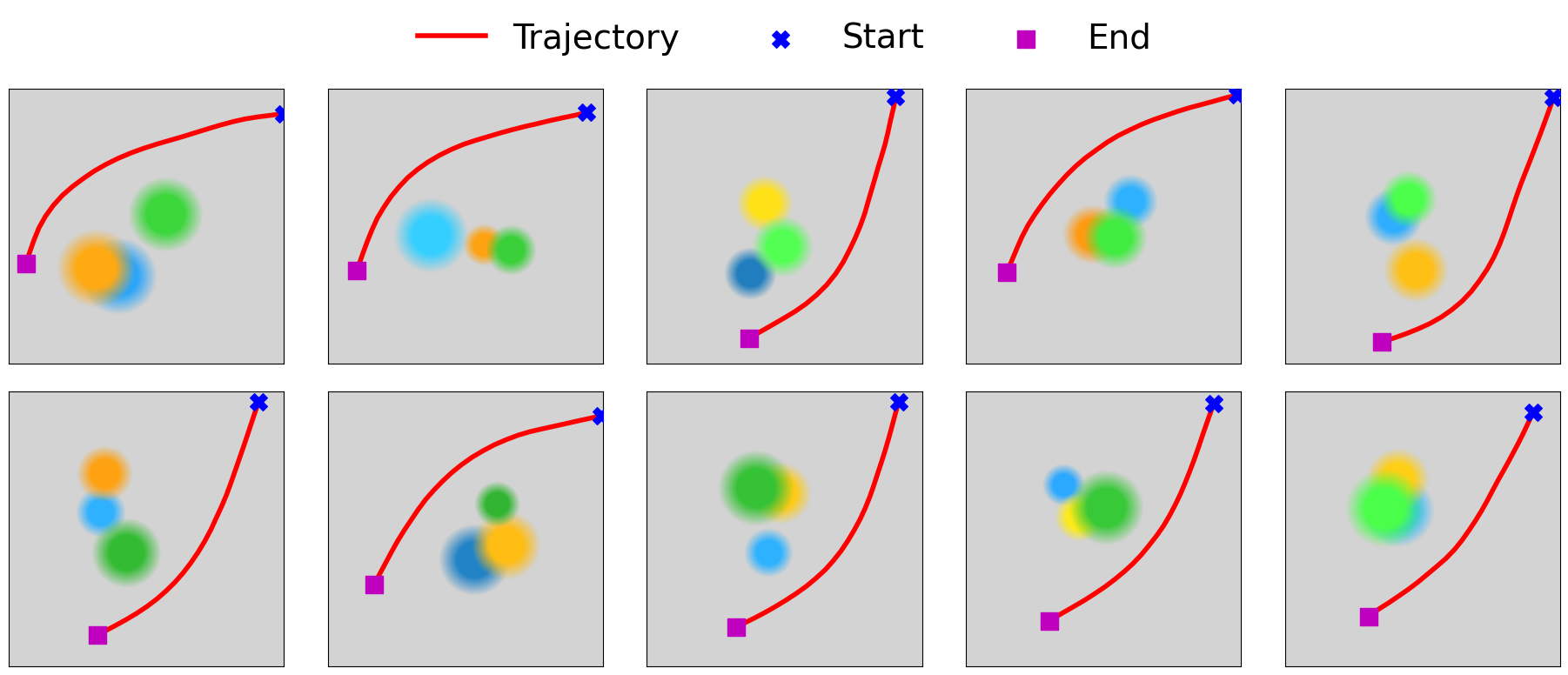}
    \caption{2D examples of trajectories in environments with stationary Gaussian-bump obstacles. The left panel shows 10 instances with two randomly placed bumps, and the right panel shows 10 instances with three. Red curves denote predicted trajectories from random starting points (blue crosses) to the terminal states at $T=3$ (purple square). The radius of each bump corresponds to 1.5 times its standard deviation, while brightness indicates the bump height $h$: darker shading represents larger $h$.}
    \label{fig:2d_stationary_1}
\end{figure}

\begin{figure}[htbp]
    \centering
    \includegraphics[width=0.45\linewidth]{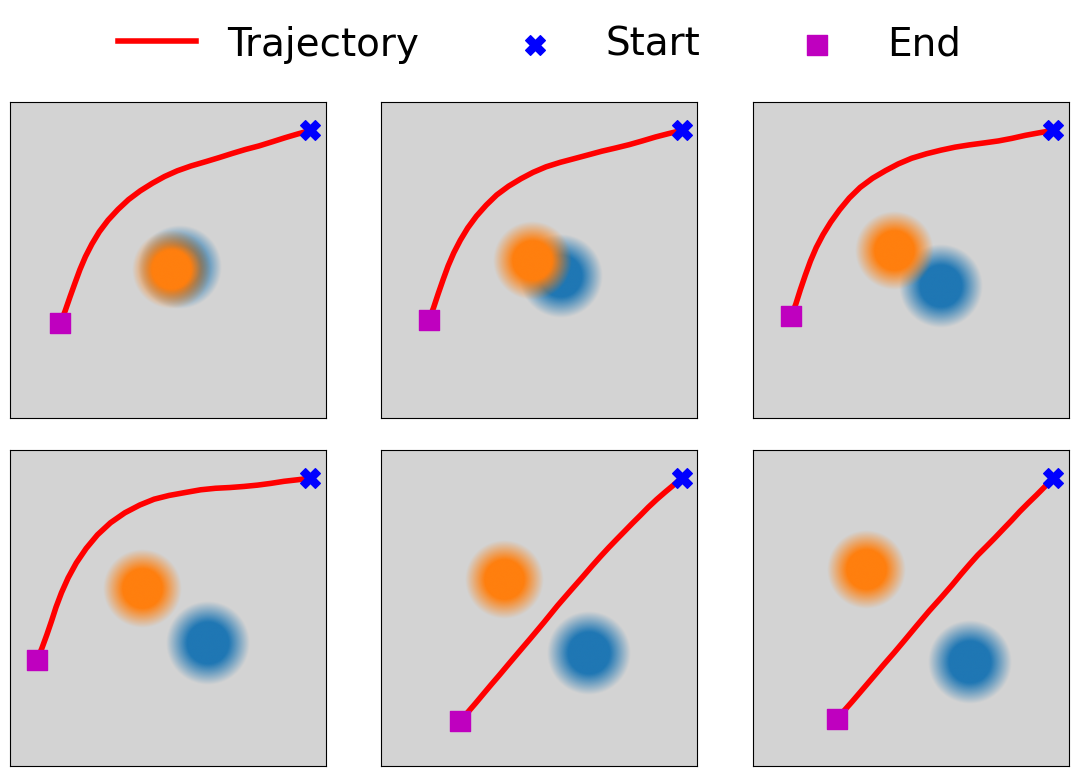}
    \hspace{3mm}
    \includegraphics[width=0.45\linewidth]{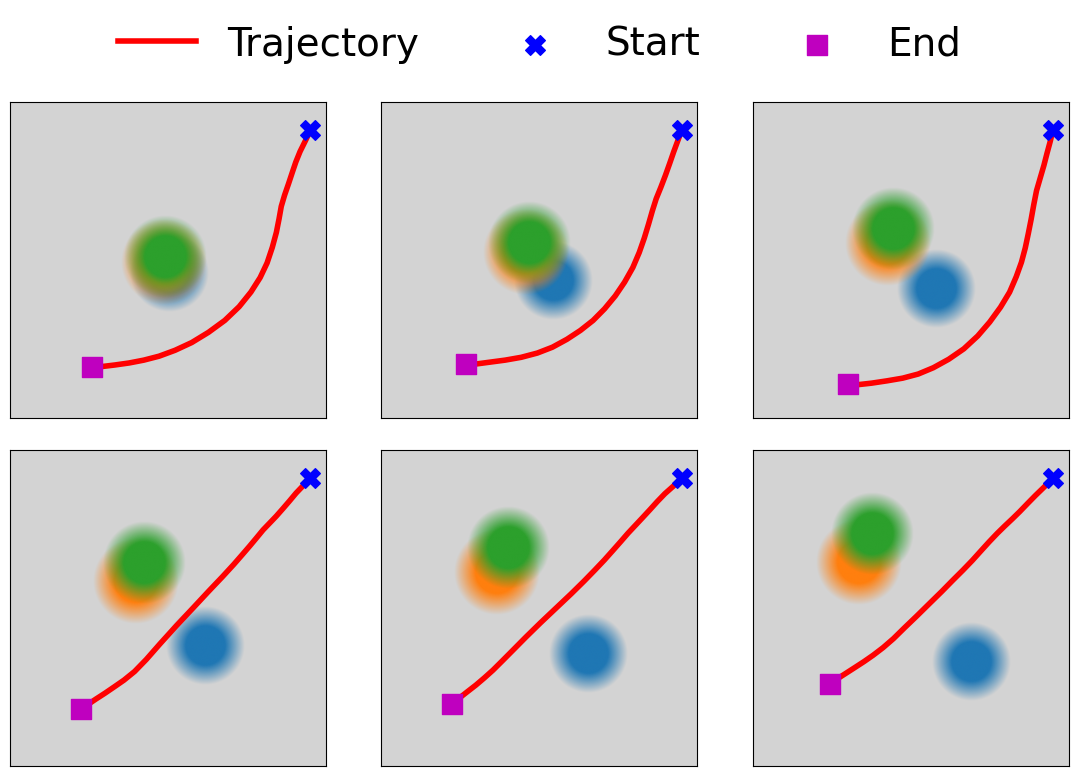}
    \caption{2D examples illustrating how the model adjusts its control strategy when Gaussian-bump obstacles are aggregated versus separated. Each bump is visualized with radius equal to 1.5 times its standard deviation and unit height. Predicted trajectories are shown in red, starting from the blue cross and ending at the terminal state (purple square) at $T=3$.}
    \label{fig:2d_stationary_2}
\end{figure}




\subsubsection{2D Moving Gaussian Bumps}
In this part, we set $\mX = [0.9,1]^2$. For each $j=1,\dots,N_b$, the bump centers are initialized as $\mathbf{c}_j \sim U([0.3,0.6]^2)$, the standard deviations are sampled as $\sigma_j \sim U[0.05,0.1]$, and the bumps move with velocities $\mathbf{v}_j \sim U([-0.15,0.15]^2)$. {In Figure~\ref{fig:2d_moving_bp2}, we compare the neural operator prediction, which leverages the bumps’ full future trajectories, with the \emph{instantaneous frozen-bump predictions}. At each snapshot time $t$, the \emph{instantaneous frozen-bump prediction} refers to the trajectory predicted under the assumption that the bumps are frozen at their current positions and solving the optimal control problem \eqref{eqn:Loss_gauss} coupled with \eqref{eqn:dynamics_gauss} over the remaining time horizon. The key message is that when the neural operator is provided with the full dynamics of the bumps, it is able to anticipate future configurations and make correct decisions. Even in situations where no passage exists at the current time, the model can foresee that an opening will appear in the near future and adjust its trajectory accordingly. For example, in both the left (two moving bumps) and right (three moving bumps) panels in Figure~\ref{fig:2d_moving_bp2}, at time $t=0.4$ and $t=0.9$ the bumps are clustered, so the instantaneous frozen-bump prediction, which is blind to future motion, takes conservative detours that are locally safe yet globally suboptimal. By $t=1.4$ and later, as the opening appear, \emph{instantaneous frozen-bump prediction} selects a feasible whose terminal state subsequently converges to the terminal state provided by neural operator.}

\begin{figure}[htbp]
    \includegraphics[width=0.45\linewidth]{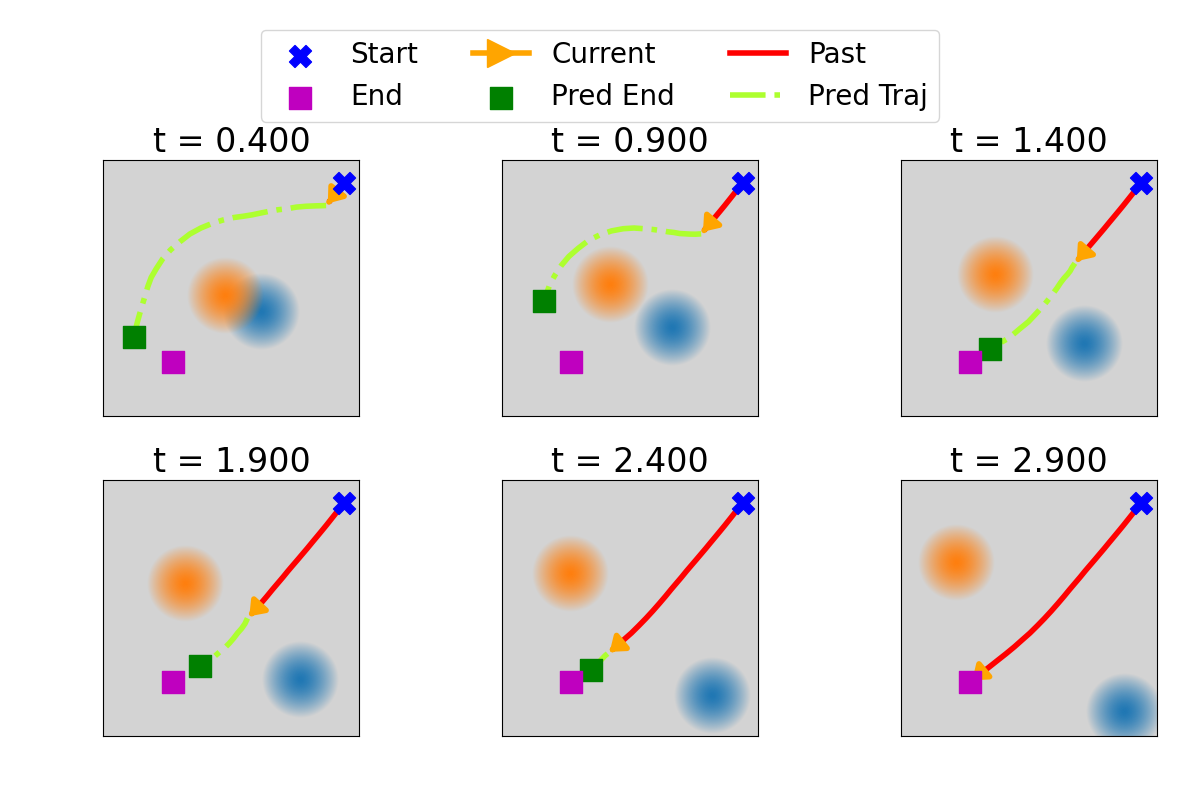}
    \includegraphics[width=0.45\linewidth]{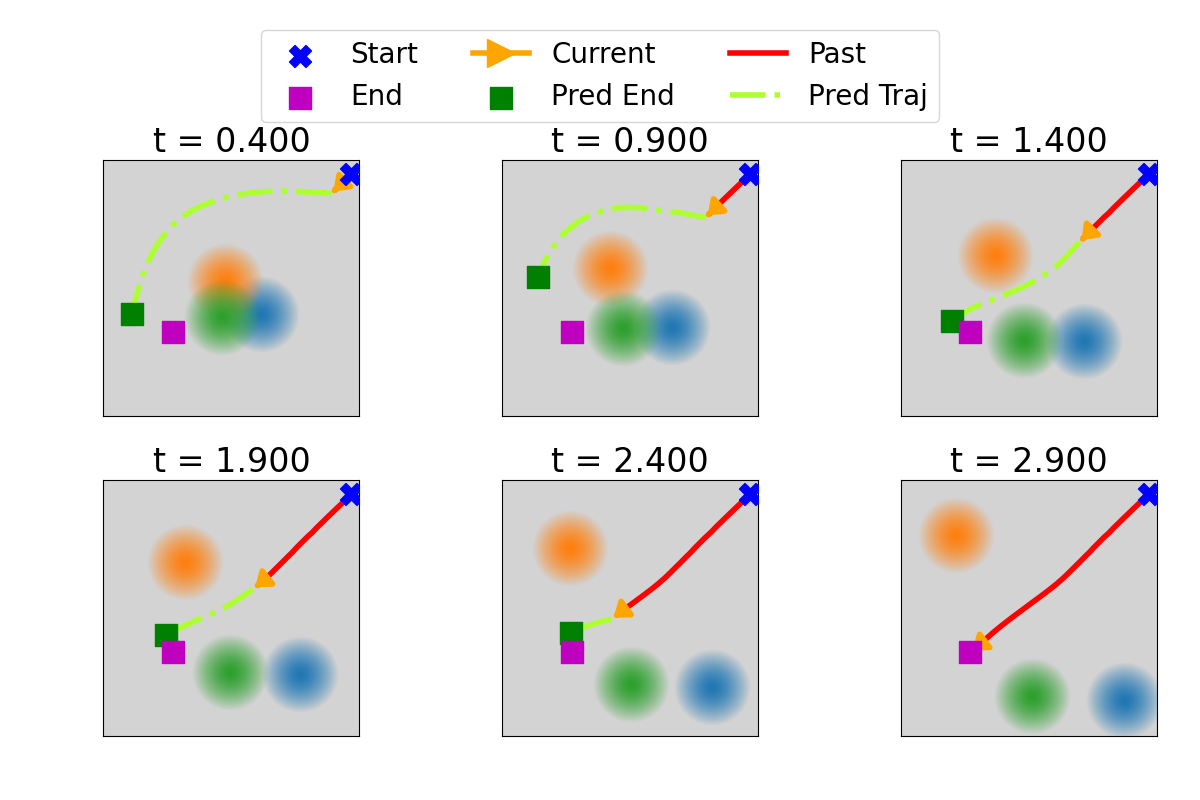}
    
    \caption{2D examples of moving Gaussian-bump environments with two bumps (left) and three bumps (right). Snapshots showing instantaneous frozen-bump prediction at different $t$. In each subfigure, the start is marked by a blue cross; the past trajectory up predicted by the Neural Operator to time $t$ is shown in red; and the current state at time $t$, with its instantaneous heading, is indicated by an orange arrow. The global-optimal terminal state predicted by the Neural Operator is shown as a purple square. The \emph{instantaneous frozen-bump prediction} is obtained by freezing the bumps at their locations at time $t$ and solving \eqref{eqn:Loss_gauss} that coupled with \eqref{eqn:dynamics_gauss} over the remaining horizon $[t, 3]$; the result is plotted as a light-green dashed curve and terminates at the predicted end position that marked by a green square.}
    
    \label{fig:2d_moving_bp2}
\end{figure}

\subsubsection{3D Gaussian Bumps}
In this part, we consider 3D setting with both stationary and moving bumps on the domain $\mX = [0.9,1]^3$. In stationary case, for each $j=1,\dots,N_b$, the bump centers are initialized as $\mathbf{c}_j \sim U([0.3,0.6]^3)$, and the standard deviations are sampled as $\sigma_j \sim U[0.05,0.1]$. Examples with $N_b=2, 3$ are presented in Figure~\ref{fig:3d_stationary_bp2}. For moving case, the bump centers are initialized as $\mathbf{c}_j \sim U([0.3,0.6]^3)$, the standard deviations are sampled as $\sigma_j \sim U[0.05,0.1]$, and the bumps move with velocities $\mathbf{v}_j \sim U([-0.15,0.15]^3)$. Numerical examples with $N_b=2$ and $N_b=3$ are shown in Figure~\ref{fig:3d_moving_bp2} and Figure~\ref{fig:3d_moving_bp3} respectively.
\begin{figure}[htbp]
    \centering \includegraphics[width=0.42\linewidth]{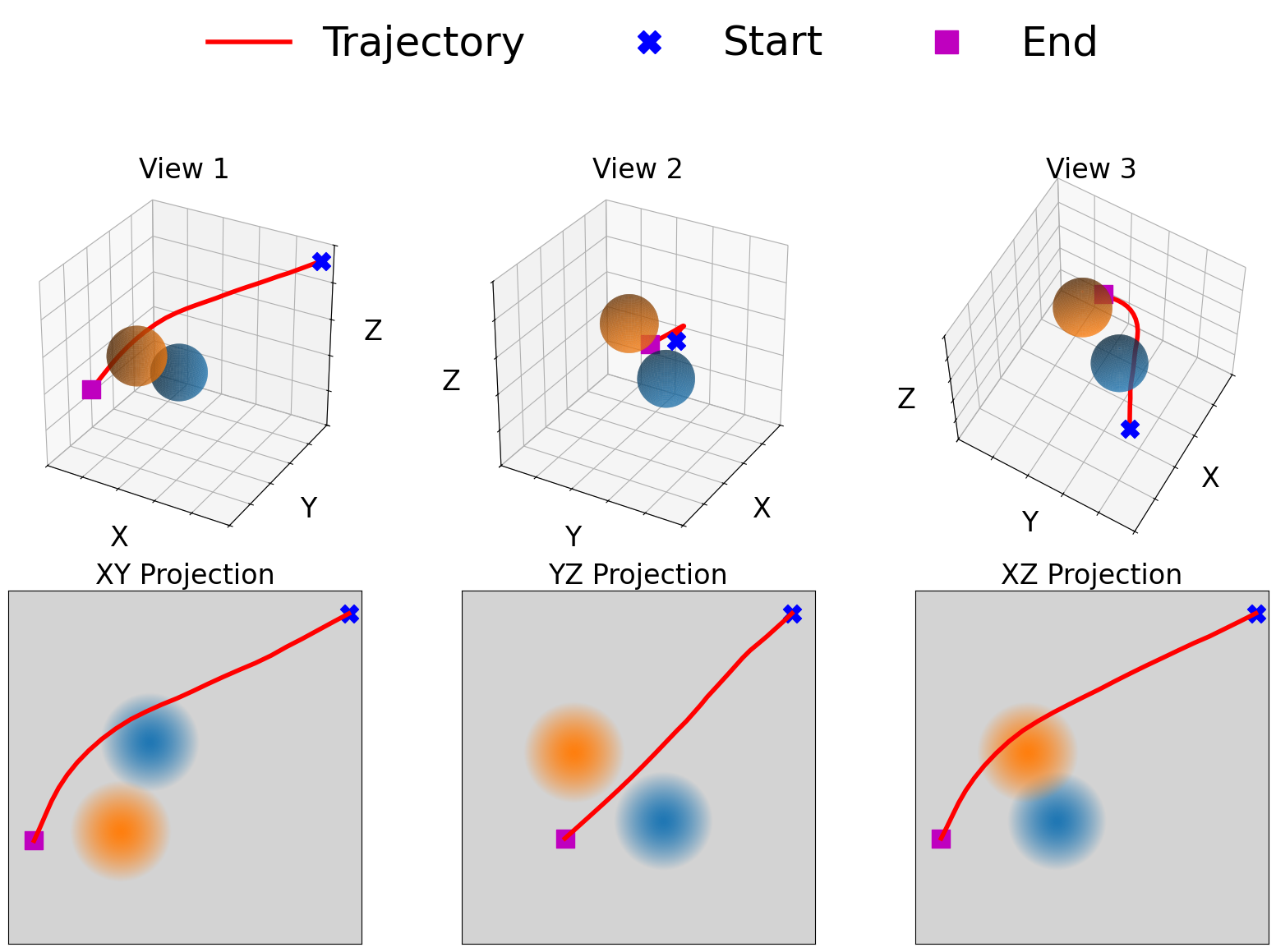}
    \hspace{8mm}
    \includegraphics[width=0.42\linewidth]{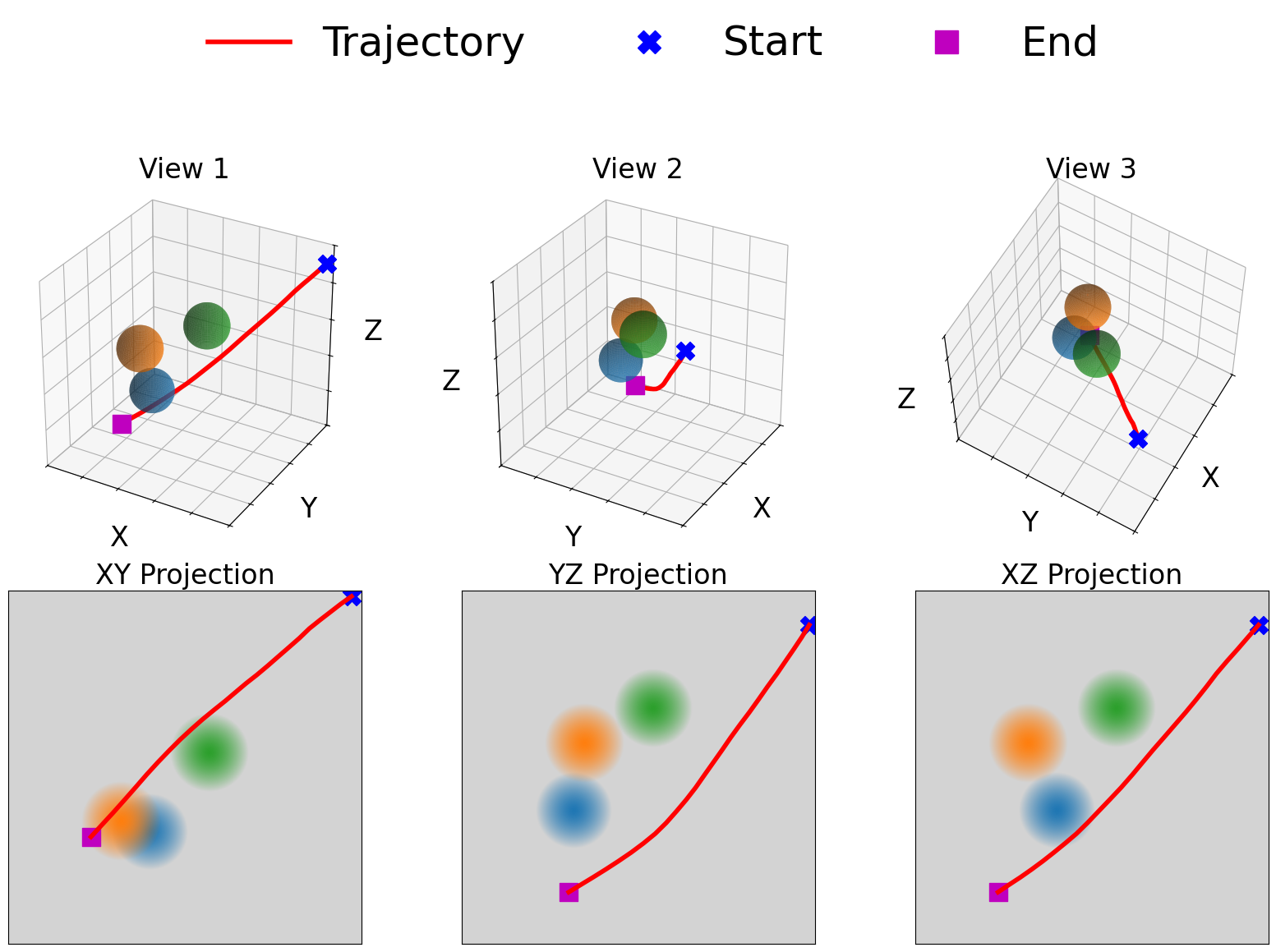}
    \caption{3D examples of trajectories in environments with stationary Gaussian bump obstacles. The left panel shows an instance with two randomly placed bumps, while the right panel shows an instance with three. The top row presents trajectories from different viewing angles, and the bottom row shows their projections onto the XY, YZ, and XZ planes. Each bump is visualized with radius equal to 1.5 times its standard deviation (both 3D views and 2D projections) and unit height. Red curves represent the predicted trajectories from the random starting point (blue cross) to the final state at $T=3$ (purple square).}
    \label{fig:3d_stationary_bp2}
\end{figure}

\begin{figure}[htbp]
    \centering \includegraphics[width=0.85\linewidth]{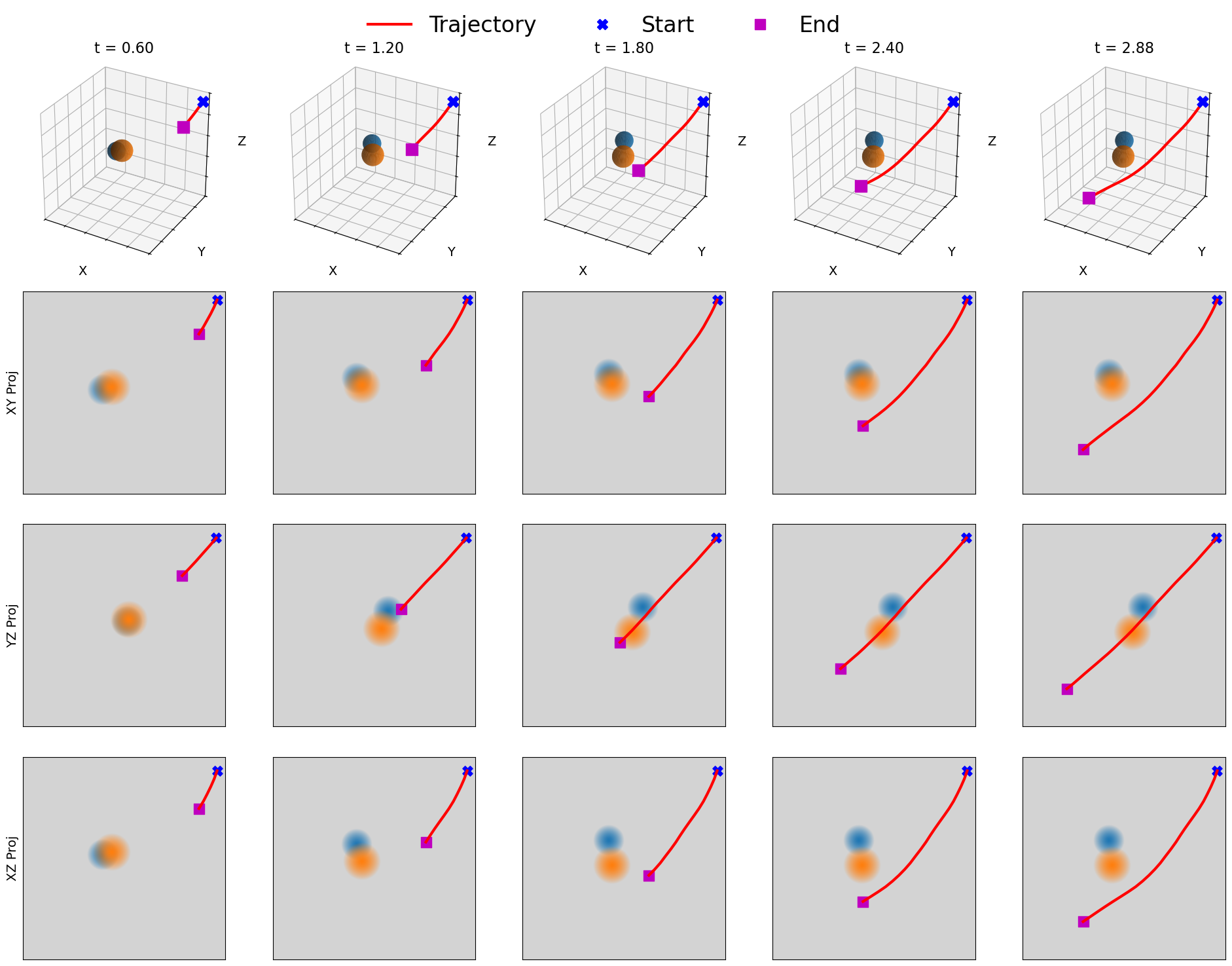}
    \caption{3D examples of trajectories in environments with \textit{two} moving Gaussian-bump obstacles. Each column corresponds to a time snapshot at $t$. The top row shows the trajectory (red) evolving over time, with the start marked by a blue cross and the current state at time $t$ by a purple square. The second to fourth rows show the corresponding projections onto the XY, YZ, and XZ planes. Each bump is visualized with radius equal to 1.5 times its standard deviation (applied to both 3D views and 2D projections) and unit height. These views illustrate how the trajectory adapts dynamically as the bumps move over time.}
    \label{fig:3d_moving_bp2}
\end{figure}

\begin{figure}[htbp]
    \includegraphics[width=0.85\linewidth]{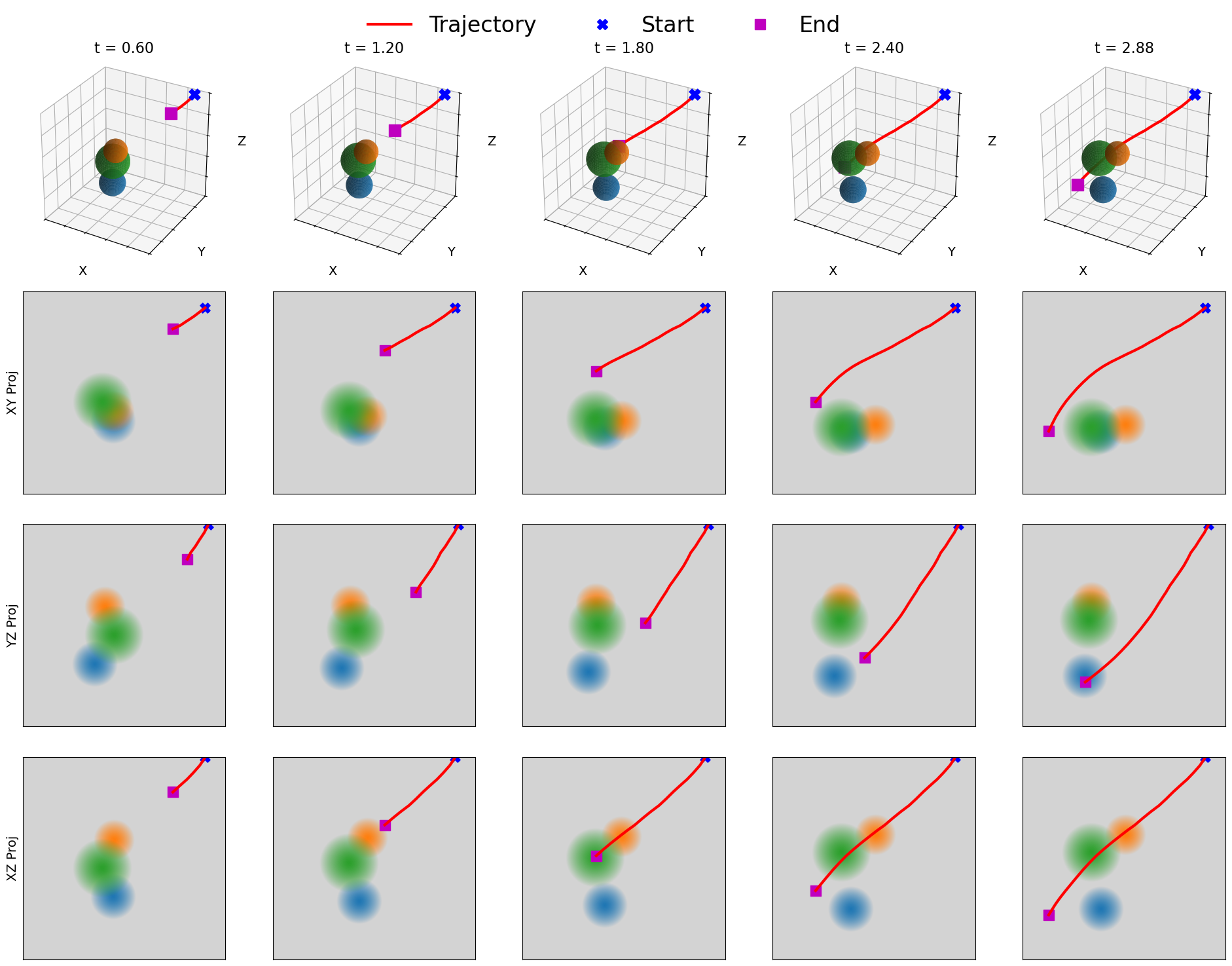}
    \caption{3D examples of trajectories in environments with \textit{three} moving Gaussian-bump obstacles. Each column corresponds to a time snapshot at $t$. The top row shows the trajectory (red) evolving over time, with the start marked by a blue cross and the current state at time $t$ by a purple square. The second to fourth rows show the corresponding projections onto the XY, YZ, and XZ planes. Each bump is visualized with radius equal to 1.5 times its standard deviation (applied to both 3D views and 2D projections) and unit height.}
    \label{fig:3d_moving_bp3}
\end{figure}

\subsection{Maze Navigation}\label{sec:maze}
In this subsection, we numerically evaluate the performance of our method on 2D maze navigation problems. We adopt a weighted variant of the control objective in equations~\eqref{eqn:Loss}, where distinct terms in the loss are assigned separate weights to balance their relative contributions. The resulting optimal control is given by:
\begin{equation}\label{eqn:Loss_maze}
     \bu^* \in  \arg \min_{\mathbf{u}} J_B(\mathbf{x}(t), \mathbf{u}) = w_r \int_0^T \frac{1}{2}|\bu|^2 dt +  w_B \int_0^T B(\bx(t))dt + w_T |\bx(T)|^2,
\end{equation}
subject to 
\begin{equation}
\left\{
\begin{aligned}
    \dot{\mathbf{x}}(t) &=  \mathbf{u}(t, \bx(t)), \quad t \in [0, T] \\
    \mathbf{x}(0) &= \mathbf{x}_0 .
\end{aligned}
\right.
\end{equation}
we fix the initial condition as $\bx_0 = (1,1)$ and enforce the terminal position to be the origin by assigning a large value to the terminal cost weight $w_T$. The barrier function is defined by the maze construction procedure described in Algorithm~\ref{alg:maze} (Appendix~\ref{sec:network_maze}). In the following numerical experiments, we fix the loop coefficient at 0.4 and set the maze width equal to its height (e.g., a size $5 \times 5$ maze has width 5 and height 5). To encourage the trajectory to avoid obstacles, we assign a large weight $w_B$ to the barrier loss term. The maze barrier is constructed using a set of axis-aligned rectangular barriers; see Appendix~\ref{sec:network_maze} for details. Each rectangle is specified by a tuple $[a, b, c, d]$, where $a$ and $b$ denote the left and right x-coordinates, and $c$ and $d$ denote the bottom and top y-coordinates of the rectangle, respectively. The barrier loss is then be computed by 
\begin{equation}
    B(\bx) = \sum_{j=1}^{N_b} B_j(\bx; a^j, b^j, c^j, d^j)
\end{equation}
where the $N_b$ denote number of rectangles and each $B_j$ represents the barrier loss of the $j$-th rectangular obstacle:
\begin{equation}\label{eqn:wall}
    B_j(\bx; a^j, b^j, c^j, d^j)) = \sigma(\lambda(x-a^j))*\sigma(\lambda(b^j-x))*\sigma(\lambda(y-c^j))*\sigma(\lambda(d^j-y)),
\end{equation}
where $\sigma$ denotes the sigmoid function (not to be confused with the standard deviation in the Gaussian mixture example) and $\lambda$ is the sharpness parameter. 

In this experiment, we consider three levels of problem complexity: mazes generated by Algorithm~\ref{alg:maze} with \(w=h=5\) (denoted \(5\times5\), easiest), \(w=h=7\) (denoted \(7\times7\), moderate), and \(w=h=9\) (denoted \(9\times9\), hardest). For each level, we use the same training set size and evaluate on an identically sized test set. We present ten post-training inference instances: \ref{fig:maze55} for \(5\times5\), \ref{fig:maze77} for \(7\times7\), and \ref{fig:maze99} for \(9\times9\). As maze complexity increases, trajectory quality degrades: in \(5\times5\), all trajectories are successful and remain within free space; in \(7\times7\), several paths graze edges or corners; and in \(9\times9\), many fail, with trajectories crossing walls. This trend is reflected in the boxplots of obstacle loss across maze sizes in Figure~\ref{fig:boxplot}, which increase with complexity.





\begin{figure}[t]
  \centering
  \captionsetup[subfigure]{justification=centering, font=small}
  \begin{subfigure}[t]{0.48\textwidth}
    \includegraphics[width=\linewidth]{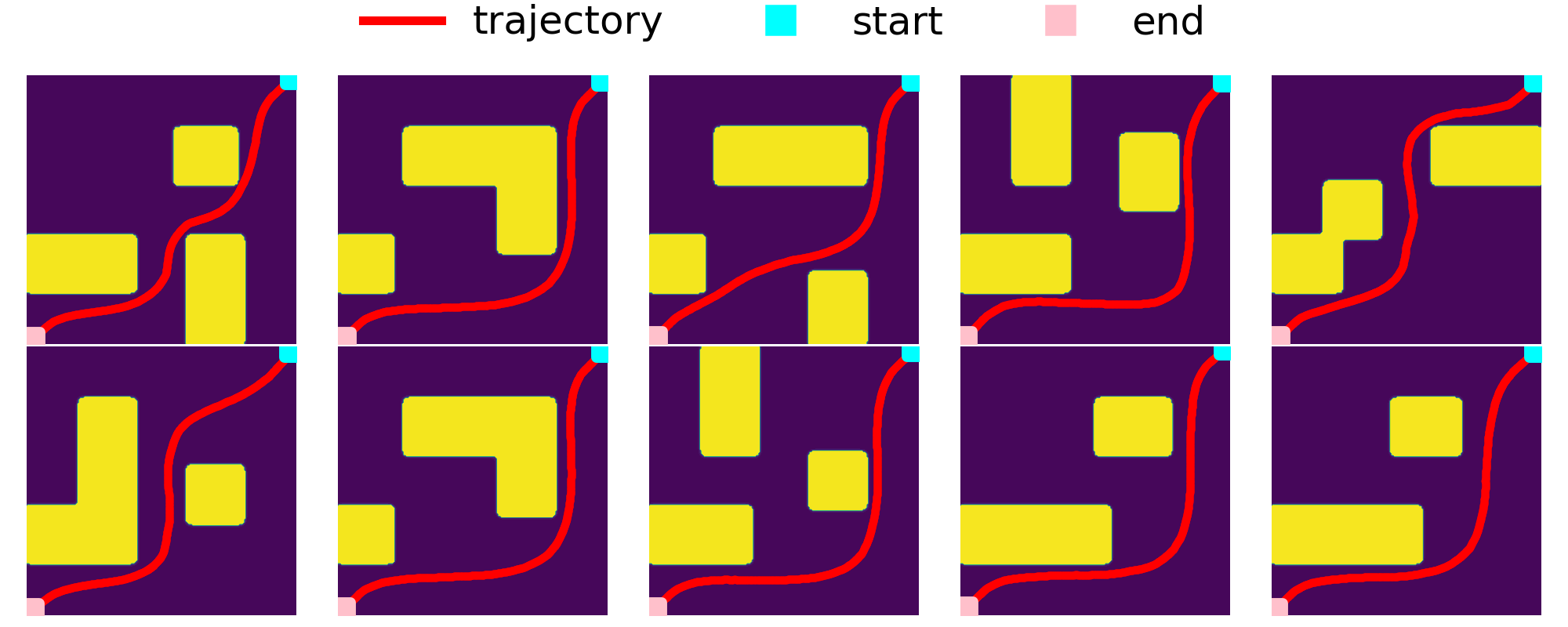}
    \caption{5×5}
    \label{fig:maze55}
  \end{subfigure}\hfill
  \begin{subfigure}[t]{0.48\textwidth}
    \includegraphics[width=\linewidth]{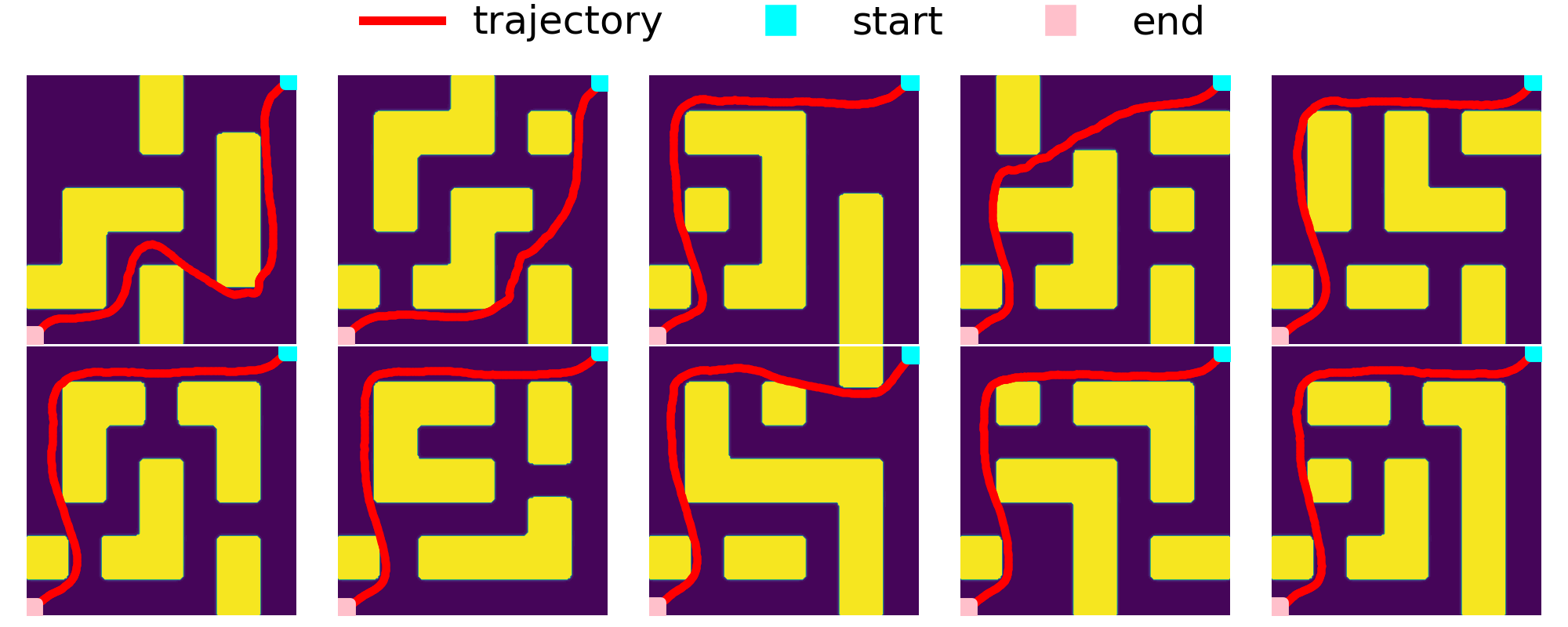}
    \caption{7×7}
    \label{fig:maze77}
  \end{subfigure}

  \vspace{0.7em} 

  \begin{subfigure}[t]{0.48\textwidth}
    \includegraphics[width=\linewidth]{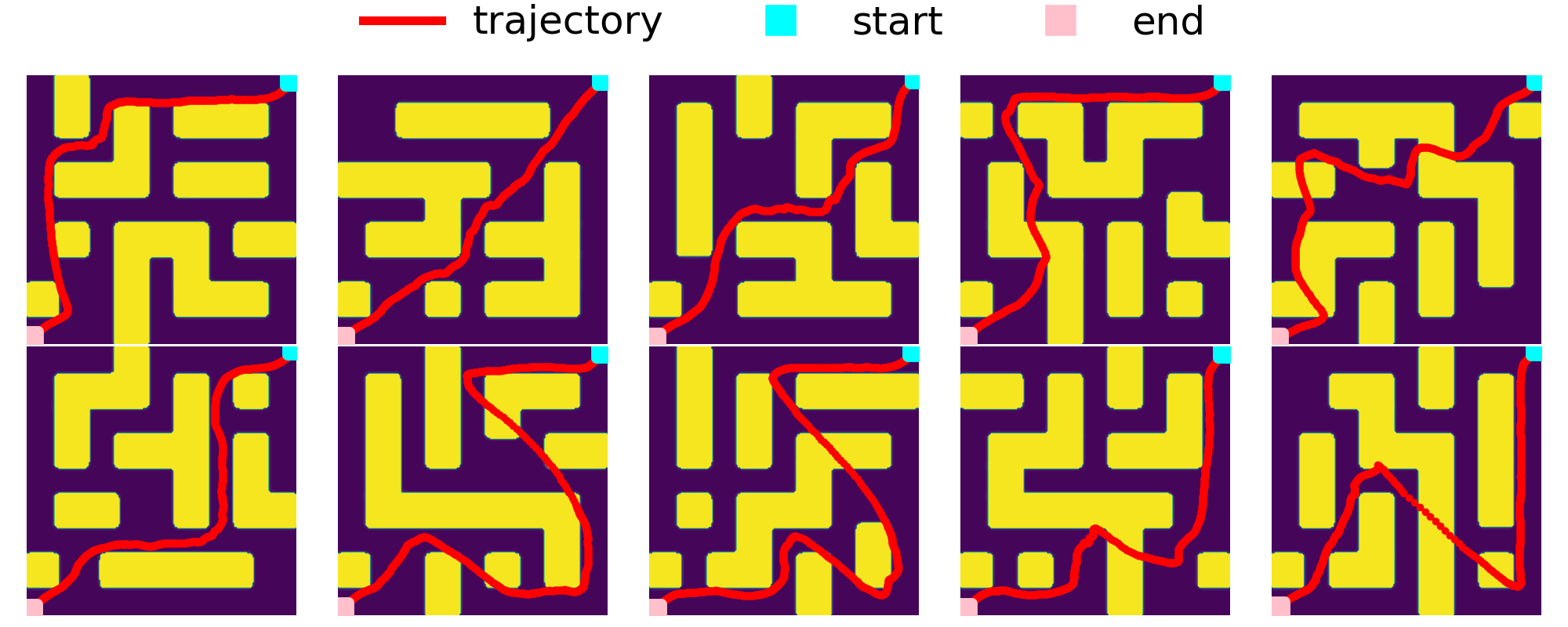}
    \caption{9×9}
    \label{fig:maze99}
  \end{subfigure}\hfill
  \begin{subfigure}[t]{0.48\textwidth}
    \includegraphics[width=\linewidth]{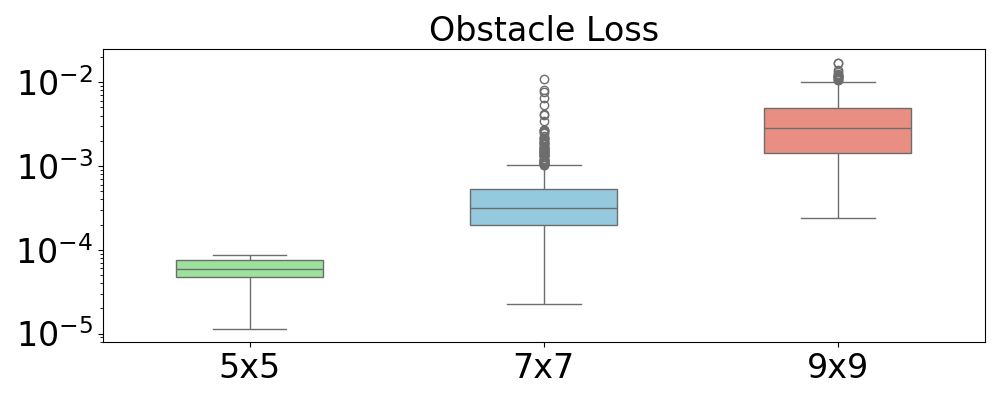}
    \caption{Obstacle loss}
    \label{fig:boxplot}
  \end{subfigure}
  \caption{Trajectory prediction in maze environments of increasing complexity and the corresponding obstacle loss. Panels (a)–(c) show 10 representative trajectories (red) from the start (cyan square) to the end (pink square) in mazes of size $5\times 5$, $7\times 7$, and $9\times 9$, respectively. Panel (d) presents boxplots of the obstacle loss across different maze sizes, illustrating how trajectory feasibility degrades as maze complexity increases.}
  \label{fig:maze}
\end{figure}

\subsection{Unicycle}\label{sec:unicycle}
In this subsection, we extend our study to the unicycle problem in a 2D physical space with nonlinear dynamics. We adopt the enhanced unicycle model described in Section 13.2.4 of \cite{lavalle2006planning}. The corresponding optimal control problem is formulated as follows:
\begin{equation}\label{eqn:car}
     \bu^*  \in  \arg \min_{\bu} J_B(\mathbf{x}(t), \bu) = w_r \int_0^T \frac{1}{2} |\bu|^2 \rd t +  w_B \int_0^T B(\bx(t)) \rd t + w_t |\bx(T)-\bx_T|^2
\end{equation}
where the state is given by $\bx(t)=(x(t), y(t), \phi(t), v(t))$. Here $\phi$ denotes the heading angle of the vehicle with respect to the x-axis, and $v$ denotes its velocity. The control function is $\bu(t)=(u(t), a(t))$, where $u(t)$ denotes the angular velocity (i.e. steering rate) and $a(t)$ denotes the acceleration. The target terminal state is denoted by $\bx_T = (x_T, y_T, \phi_T, v_T)$, and $w_r$, $w_B$, and $w_T$ are positive weighting parameters. The dynamics is given by:
\begin{equation}
\begin{cases}
\dot{x}(t) = v(t) \cos\phi(t), \\
\dot{y}(t) = v(t) \sin\phi(t), \\
\dot{\phi}(t) = u(t), \\
\dot{v}(t) = a(t), \\
(x(0),y(0),\phi(0),v(0)) = (x_0,y_0,\phi_0,v_0).
\end{cases}
\end{equation}
In this setting, we assume that obstacles are parametrized by gaussian bumps defined in \eqref{eqn:gauss}, which is 
\[
B(\bx(t)) = \sum_{j=1}^{N_b} \mN(\bx; \mathbf{c}_j, \sigma_j, h_j).
\]
In the following examples, we set $w_r = 1$, $w_B=50$, and $w_t=200$, and fix the initial condition as 
\[
(x_0,y_0,\phi_0,v_0) = (0, 0, \tfrac{\pi}{2}, 0),
\]
which corresponds to a vehicle starting at the origin, facing upward, and initially at rest. In our experiments, we randomly sample the terminal position and heading angle while fixing the terminal velocity to zero. Specifically, we take $(x_T, y_T) \sim U([0.9,1]^2)$, $\phi_T \sim U[-\pi,\pi]$, and set $v_T = 0$. 

In Figure~\ref{fig:car}, we compare our neural operator method with a  baseline nonlinear-programming (NLP) solver \cite{kelly2017introduction,Kelly_OptimTraj_Trajectory_Optimization_2022}.In the NLP formulation, the nonlinear dynamics and complex constraints are discretized and solved directly, without relying on the necessary conditions of optimality. While widely used, NLP offers no convergence guarantees and is sensitive to the initial guess. In this experiment, we set initial guess to all instances as $u_{guess}=0$ and $a_{guess}=0$. As demonstrated, in some cases our neural operator approach provides control similar to the NLP baseline, though not always. We also analyze the statistics of the cost gap between the two methods (Figure~\ref{fig:car}, right panel) and find that our approach achieves even lower cost in about 15 of the 100 test tasks, while most results are centered around zero—indicating that the neural operator produces controls that are close to, but slightly suboptimal compared to NLP.

Nevertheless, a key advantage of our method is its efficiency: inference with the neural operator is nearly instantaneous. We conducted 100 test tasks using both NLP and NO on an Apple M3 Pro chip. The average computation time of NLP was 4.42 s, whereas the average inference time of NO was only 0.00184 s


\begin{figure}[htbp]
    \centering \includegraphics[width=0.6\linewidth]{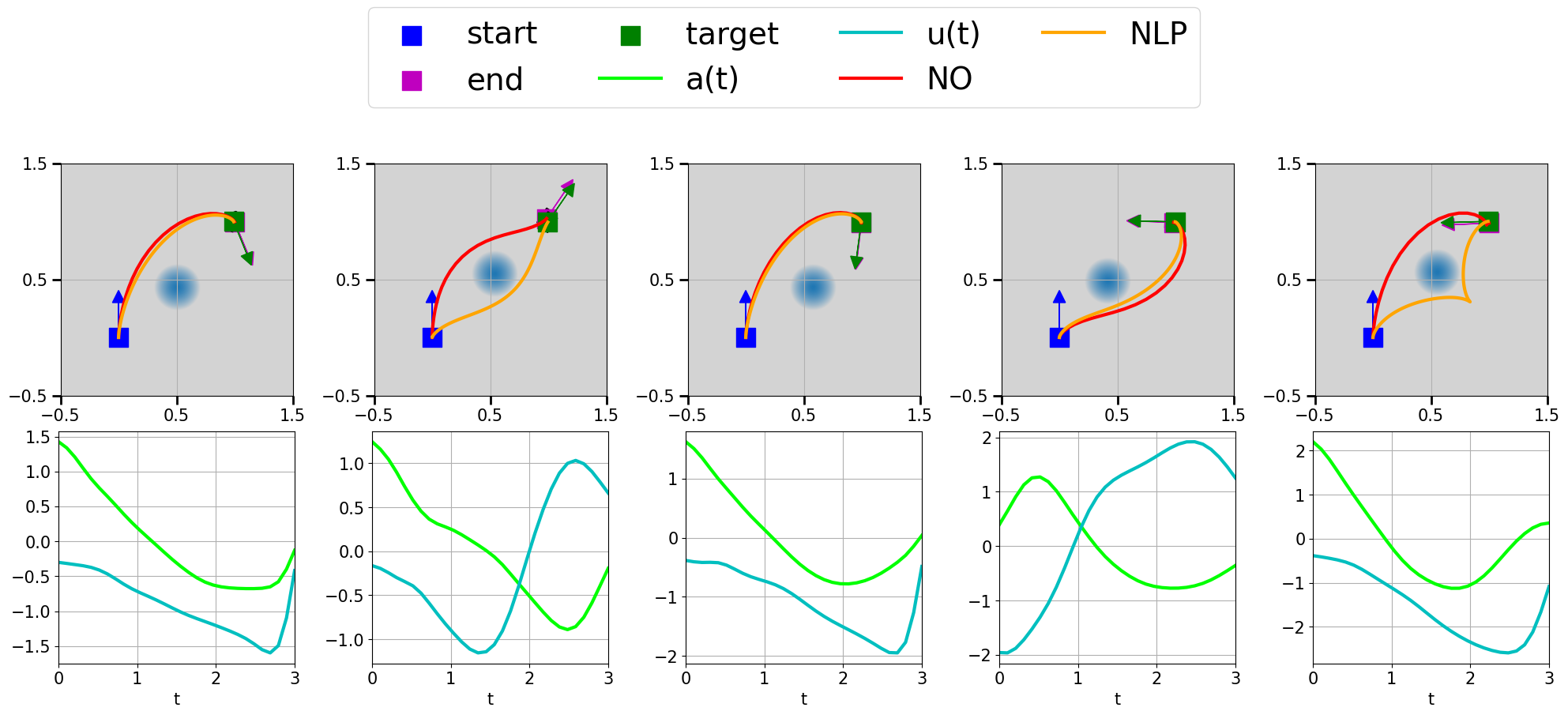}
    \includegraphics[width=0.3\linewidth]{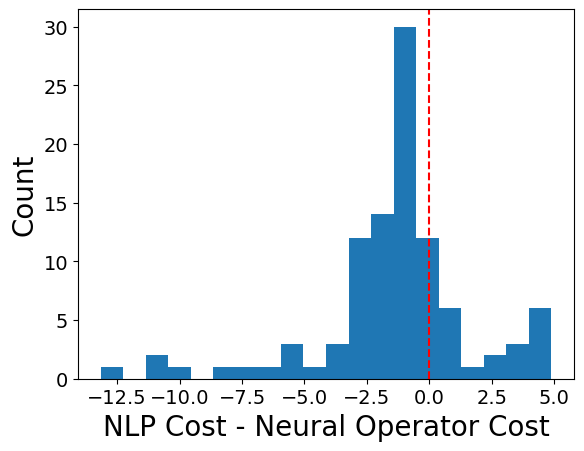}
    \caption{Examples of unicycle car control. The left panel compares neural operator (NO)–based control with nonlinear programming (NLP) solutions across five instances. The top row shows trajectories from the start state (blue square with arrow) to the target state (green square with arrow) and the final state (purple square), with NO predictions in red and NLP predictions in orange. The bottom row plots the corresponding control signals predicted by the NO, with $u(t)$ in cyan and $a(t)$ in green. The histogram on the right shows the distribution of cost differences (NLP cost minus NO cost) across test cases, where positive values indicate that the neural operator achieves lower cost.}
    \label{fig:car}
\end{figure}

\subsection{Closed-Loop Control of the Unicycle Robot via Model Predictive Control}\label{sec:MPC}
In this subsection, we demonstrate the effectiveness of the extended solution operator combined with MPC, introduced in Section~\ref{sec:extension}, yields \emph{closed-loop} optimal control predictions for on a unicycle robot delivery task. The robot moves in a maze-like environment with unknown obstacles representing people that may appear unexpectedly along its path. Meanwhile the robot must navigate from a prescribed initial position to a target location using only locally available obstacle information. Specifically, the robot has full access to the static maze layout, which may serve as a known map, while the pedestrians are modeled as dynamic obstacles that are unknown a priori and can only be detected when they enter the robot’s sensing region. Precisely, we formulate this unicycle delivery task as the following free-terminal-time optimal control problem:
\begin{equation}\label{eqn:free_time_loss}
{ \mathbf{u}^*(t)}, { T^*}  \in  \arg \min_{\bu, T} J_B(\mathbf{x}(t), \bu) = w_1 { \int_0^T |\mathbf{u}(t)|^2 \mathrm{d} t } + w_2 {  \int_{0}^ { T} B(\bx(t))  \mathrm{d} t} + w_3  \|\mathbf{x}({ T}) - { \mathbf{x}_T} \| + w_4 { T}
\end{equation}
\begin{equation}\label{eqn:uni_dyn}
\text{s.t.}\quad
\begin{cases}
\dot{x}(t) = v(t)\cos \phi(t), \\
\dot{y}(t) = v(t)\sin \phi(t), \\
\dot{\phi}(t) = u(t), \\[4pt]
\dot{v}(t) = a(t), \\[6pt]
{(x(0),\, y(0),\, \phi(0),\, v(0)) = (x_0,\, y_0,\, \phi_0,\, v_0):=\mathbf{x}_0}.
\end{cases}  
\end{equation}
Here the function $B$ represents the aggregated obstacles:
\begin{equation}
    B(\bx) = \sum_{j=1}^{N_{b1}} B_j(\bx; a^j, b^j, c^j, d^j) + \sum_{l=1}^{N_{b2}} \mN_l(\bx; \bc^l, \sigma^l)
\end{equation}
which consists of two components. The terms $B_j$ encode the maze walls and follow the definition in~\eqref{eqn:wall}, while the functions $\mN_l$ are parameterized Gaussian bumps used to approximate static unknown obstacles (e.g., stationary pedestrians). To model the robot’s limited sensing capability, we assume that such unknown obstacles become visible only when they lie within a prescribed vision radius $R>0$. Given a training dataset $\Gamma$, we then train a free-terminal-time solution operator using Algorithm~\ref{alg:training}, where the cost functional is adapted from~\eqref{eqn:Loss} to~\eqref{eqn:free_time_loss} together with the dynamics~\eqref{eqn:uni_dyn}. Once the learned operator $\mathcal{G}_{\Gamma;\tilde{\theta}}$ is obtained, we generate trajectories using the MPC procedure in Algorithm~\ref{alg:mpc_inference}, where at each step $k$ the obstacle field is updated as
\[
B_k(\mathbf{x}) 
  = \sum_{j=1}^{N_{b1}} B_j(\mathbf{x}; a^j, b^j, c^j, d^j)
    + \sum_{l \in S_k} \mathcal{N}_l(\mathbf{x}; \mathbf{c}^l, \sigma^l),
\]
where the index set of currently visible unknown obstacles is
\[
    S_k = \big\{\, l \;\big|\; 
    \|\mathbf{c}^l - \mathbf{x}(k\Delta t)\|_\infty \le R \,\big\}.
\]

We remark that the above formulation is general and can accommodate various maze configurations, since the maze walls are treated as part of the input and the learned operator can adapt accordingly. The downside is that this flexibility requires a substantially larger training dataset and more computational resources. For simplicity, in the following experiments we fix the maze configuration by using three static walls, while allowing only the unknown obstacles to vary. Figure~\ref{fig:MPC} presents two representative instances of the resulting navigation procedure, 
illustrating that the proposed solution operator delivers remarkably well trajectories in the inference stage.
Each instance visualizes the first 15 MPC updates, corresponding to time points $t = k \Delta t$ with $\Delta t = 0.04$ and $k = 1, \dots, 15$. The robot’s current pose is shown as a blue rectangle with a heading arrow, the executed trajectory appears as a solid blue curve, and the MPC prediction based on the current obstacle field is plotted as a dashed cyan curve. The sensing radius is indicated in light blue, static maze walls are shown in gray, and active Gaussian bumps corresponding to influential obstacles are highlighted in red. Once an obstacle is no longer influential, its bump is deactivated and rendered in pink. As the robot navigates in the maze, newly visible obstacles trigger immediate replanning, illustrating the robustness and adaptability of the MPC strategy under partial observability. The construction of the dataset $\Gamma$, additional training details, and further implementation choices for the MPC rollout are deferred to Section~\ref{sec:MPC_details}.


\begin{figure}[htbp]
    \centering \includegraphics[width=0.88\linewidth]{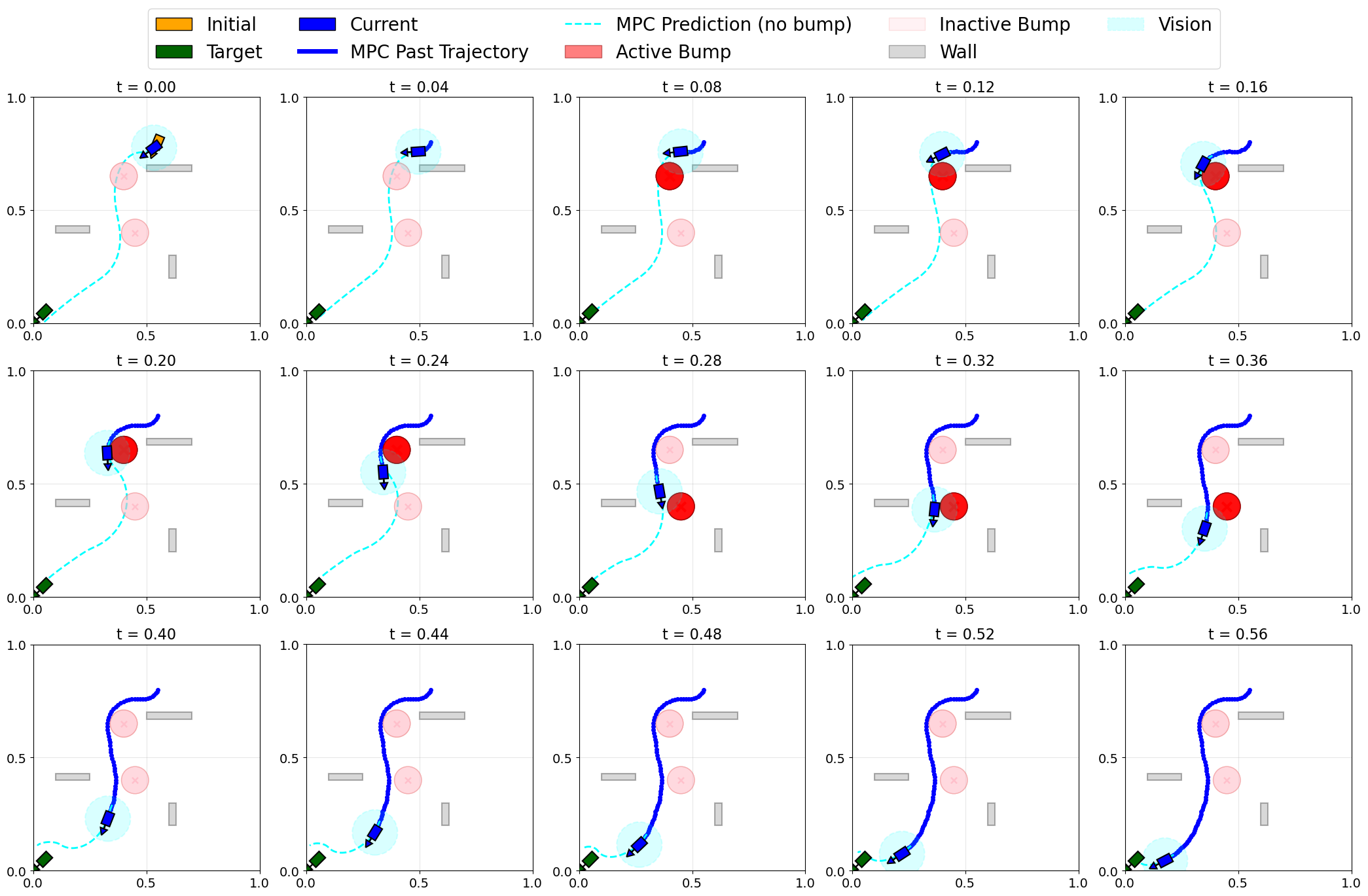}
    \includegraphics[width=0.88\linewidth]{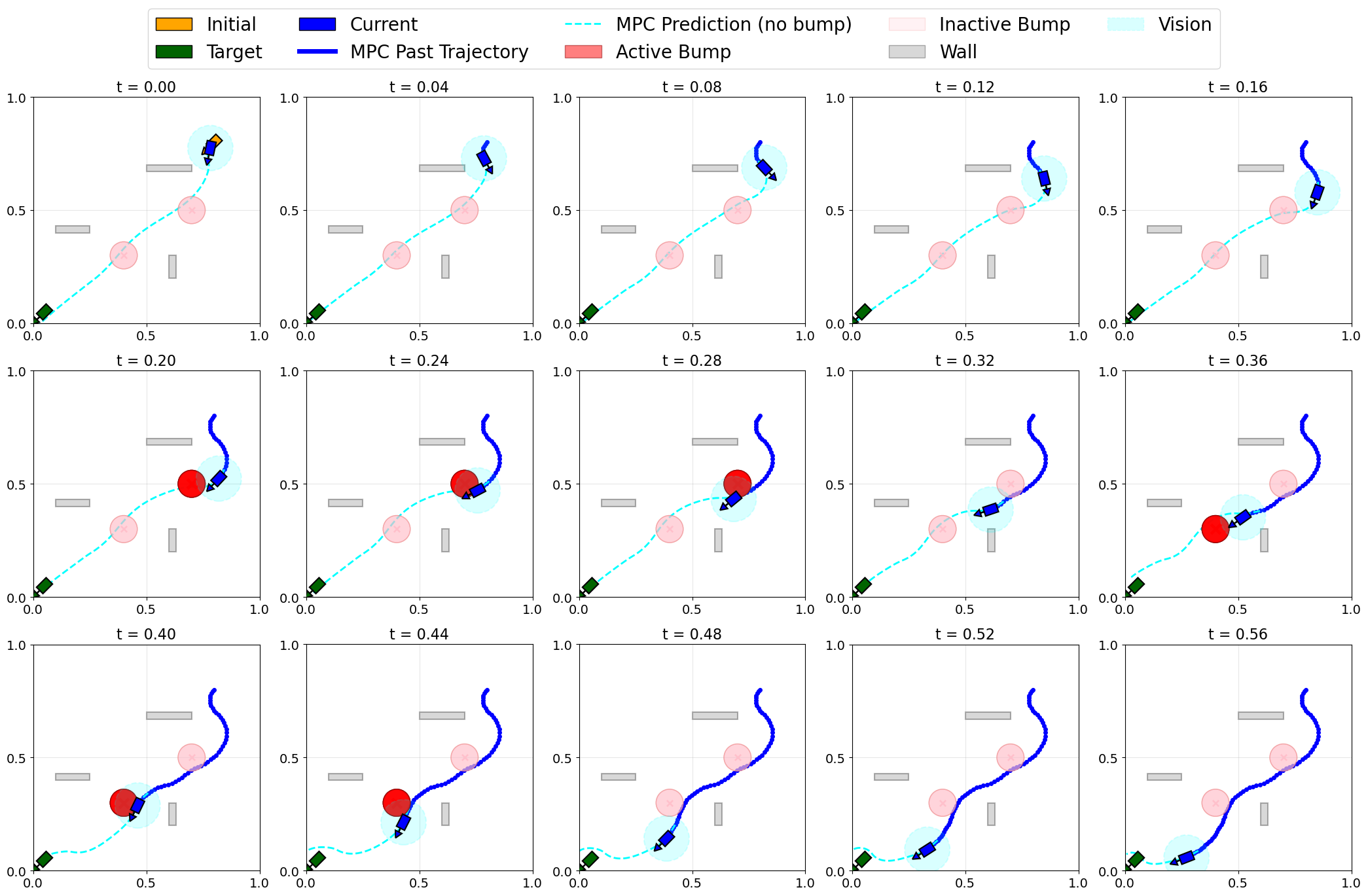}
    \caption{MPC-based navigation inside a maze with partially observable obstacles. The first three rows and the last three rows correspond to two independent instances. In each instance, every subplot shows the robot’s predicted and executed trajectories at time $t = k \Delta t$ with $\Delta t=0.04$. The robot (blue rectangle with heading arrow) moves from the initial state (orange) toward the target (green) while unknown obstacles—modeled as Gaussian bumps—become activated (red) only when entering the robot’s sensing radius (light blue). Gray rectangles denote maze walls, the solid blue curve shows the executed trajectory, and the dashed cyan curve shows the MPC prediction ignoring unseen Gaussian bumps.}
    \label{fig:MPC}
\end{figure}

\subsection{Scaling Laws}
In addition to evaluating the effectiveness and efficiency of our proposed method, we also investigate its scaling behavior. As indicated in Theorem~\ref{thm:gen}, increasing the problem complexity, measured by the intrinsic dimension, degrades performance when all other hyperparameters and training settings are fixed. 

To empirically verify this, we vary the intrinsic dimension in several problem and examine how the error scales with the number of training samples. We consider three representative examples: the linear–quadratic regulator (LQR), the 2D Gaussian-bump problem, and the 2D maze navigation problem. The setups of the latter two are described in Sections~\ref{sec:gauss_stationary} and~\ref{sec:maze}, respectively. For the LQR case, we use a scalar system with cost functional
\[
J(x(t), u(t)) = \int_0^T \left(q \left(x(t)\right)^2  + r \left(u(t)\right)^2  \right)\rd t + \lambda (x(T) - \xi)^2
\]
subject to the linear dynamics
\[
\dot{x}(t) = u(t), ~ x(0) = x_0.
\]
The corresponding analytic open-loop optimal control is
\[
u^*(t; q, r, \lambda, \xi, x_0) = \zeta(x_0 \sinh(\zeta t) + \eta \cosh(\zeta t))
\]
with $\zeta = \sqrt{\frac{r}{q}}$ and $\eta = \frac{y^* - x_0 \cosh(\zeta T)}{\sinh(\zeta T)}$, here $y^*$ is defined as follow:
\[
y^* = \begin{cases}
    m-B, \quad \xi < m-B, \\
    \xi, \quad m-B \leq \xi \leq m+B, \\
    m+B, \quad \xi \geq m+B.
\end{cases}
\]
with $m = \frac{x_0}{\cosh(\zeta T)}$ and $B = \frac{\lambda}{2 \sqrt{qr} \cosh(\zeta T) }$.
To test the scaling law, we consider four operator learning tasks with increasing input dimension:
\begin{enumerate}
    \item DOF=2: $\mG_2: (\xi, x_0) \mapsto u^*(t;\xi, x_0)$, with $q=1, r=1, \lambda = 1$, $\xi\sim U[-0.5,1.5]$ and $x_0 \sim U[0.5,1]$.
    \item DOF=3: $\mG_3: (r, \xi, x_0) \mapsto u^*(t;r, \xi, x_0)$, with $r=1, \lambda=1$, $\xi \sim U[-0.5,1.5]$ and $r, x_0 \sim U[0.5,1]$.
    \item DOF=4: $\mG_4: (q, r, \xi, x_0) \mapsto u^*(t;q, r, \xi,  x_0)$ with $\lambda=1$, $\xi \sim U[-0.5,1.5]$ and $q, r, x_0 \sim U[0.5,1]$. 
\end{enumerate}
In each setting, the degrees of freedom (DOF) is the number of independent scalar inputs that parametrize the optimal control function, i.e., the intrinsic dimension of the learning task. In Figure~\ref{fig:scaling_lqr}, we report the relative mean squared error (RMSE) between the neural-operator predictions and the analytical reference solutions (left) and the relative error between the predicted costs and the analytical optimal costs (right). As the number of degrees of freedom (DOF) increases, both errors decay more slowly as the number of training samples $N$ grows. This trend is consistent with our theoretical picture: due to the regime switching induced by the absolute-value terminal term, the optimal function $u^*(t; q,r,\lambda,\xi,x_0)$ is only $C^{0,1}$ in $\xi$. Accordingly, our main theorem\ref{thm:gen} predicts an asymptotic scaling of order $N^{-1/(1+DOF/2)}$ when $N$ is sufficiently large. In practice, however, several factors make this asymptotic regime difficult to observe in our experiments. First, the cost error can reach the $10^{-4}$ magnitude, at which point time-discretization error from the neural-ODE solver becomes non-negligible. Reducing this numerical error requires either much smaller time steps or higher-order integration schemes, both of which substantially increase training cost. Second, the optimization landscape is nonconvex and we train with Adam, so the attained solutions may be limited by optimization error rather than approximation or statistical error. Although we do not cleanly recover the predicted scaling exponent in these finite-compute experiments, we consistently observe slower error decay as DOF increases, supporting our “price-to-pay” argument.

As the complexity For the stationary Gaussian-bump problem, the intrinsic dimension corresponds to the number of bumps $N_b$ and in Figure~\ref{fig:scaling} (left), we report the relative mean squared error (RMSE) between neural operator predictions and reference solutions, where each reference solution is obtained by individually solving the minimization problems in \cref{eqn:Loss_gauss} and \cref{eqn:dynamics_gauss}. As $N_b$ increases, the RMSE decays more slowly with respect to the number of training samples $N$. This trend is clearly reflected by the dashed log–log fitting lines: the fitted slope becomes less negative (i.e., closer to zero) as $N_b$ grows, indicating a smaller decay exponent and thus reduced sample efficiency at higher intrinsic dimension. For the maze navigation problem, the intrinsic dimension reflects the structural complexity of the maze, typically captured by its size. In Figure~\ref{fig:scaling} (right), we report the obstacle loss defined in \cref{eqn:Loss_maze}, aggregated across mazes of different sizes. The decay rates appear sharper than in the Gaussian-bump case, which is due to our evaluation criterion: we only check whether the predicted trajectory successfully avoids obstacles, rather than comparing it against exact reference trajectories, which are typically unavailable.

\begin{figure}[htbp]
    \centering 
    \includegraphics[width=0.43\linewidth]{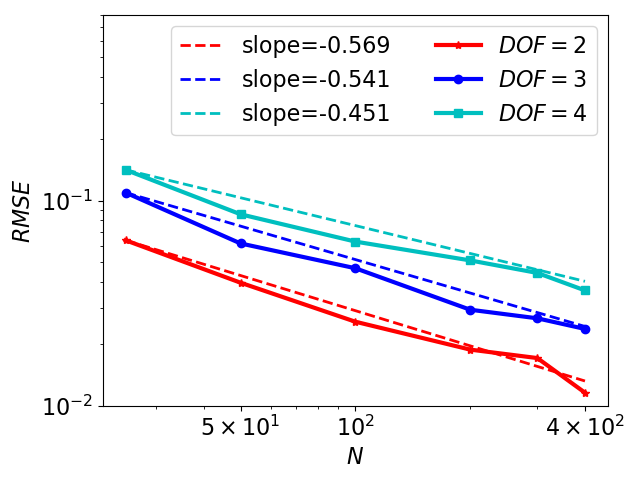}
    \includegraphics[width=0.43\linewidth]{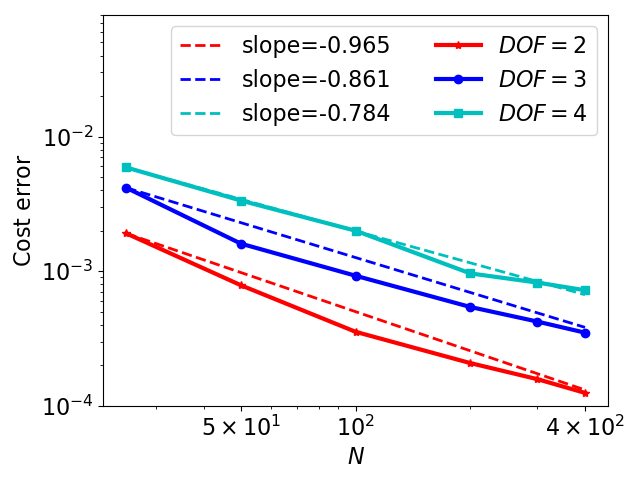}
    \caption{Numerical scaling results for the LQR operator-learning task. Left: RMSE versus the number of training samples N (log–log scale). Right: cost error versus $N$ (log–log scale). Dashed lines denote least-squares linear fits in log–log coordinates (matching the curve colors), and the reported slopes indicate the empirical decay rates.}
    \label{fig:scaling_lqr}
\end{figure}

\begin{figure}[htbp]
    \centering 
    \includegraphics[width=0.43\linewidth]{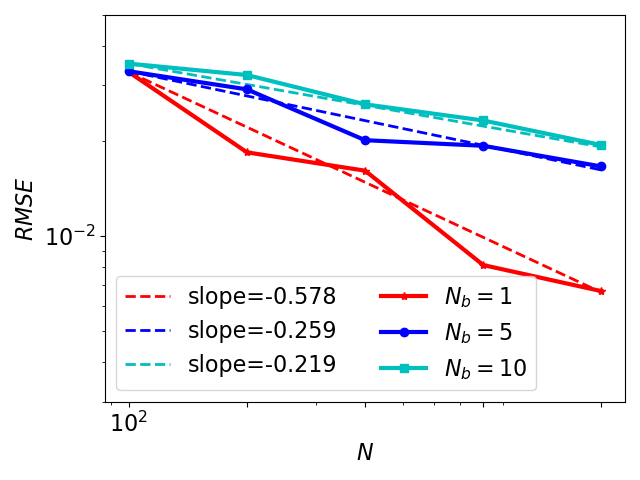}
    \includegraphics[width=0.43\linewidth]{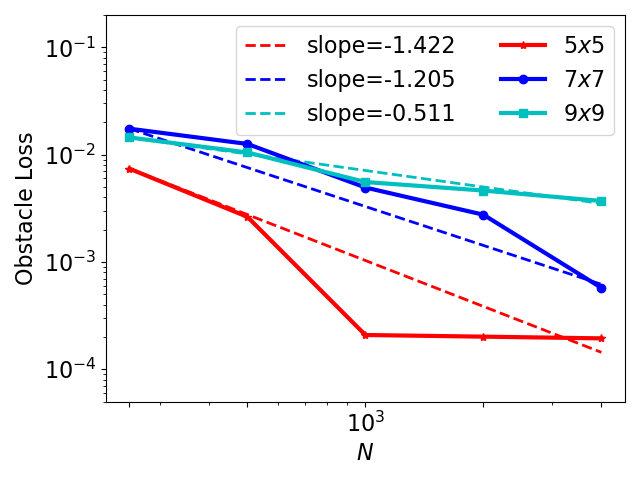}
    \caption{Numerical scaling results across two problems. Performance versus the number of training samples N (log–log scale) for (left) the 2D stationary Gaussian-bump problem with varying number of bumps $N_b$, and (right) the 2D maze navigation problem with different maze sizes. For each setting, dashed lines show least-squares linear fits in log–log coordinates (same color as the corresponding curve), and the reported slopes indicate the empirical decay rates. For the Gaussian-bump problem, we report the relative mean squared error between the neural-operator predictions and the reference solutions, where the references are obtained by solving \cref{eqn:Loss_gauss,eqn:dynamics_gauss} for each test instance. For the maze navigation problem, we report the obstacle loss defined in \cref{eqn:Loss_maze}, aggregated over mazes of each size.}
    \label{fig:scaling}
\end{figure}

\section{Related Works}
\textbf{Solution/Control learning.}
Early works on learning-based optimal control primarily aim to approximate the feedback controller for a single problem instance in a high-dimensional state space, using various function approximators such as neural networks. Different learning formulations have been explored, including direct policy optimization~\cite{han2016deep,bottcher2022ai,ainsworth2021faster}, backward stochastic differential equations-based methods~\cite{zhou2021actor,li2024neural}, physics-informed neural networks~\cite{mowlavi2023optimal,norambuena2024physics}, and supervised learning from open-loop optimal control data~\cite{nakamura2021adaptive,azmi2021optimal,hu2025learning,nakamura2022neural}. Several studies have further extended these approaches to multi-agent or mean-field control settings~\cite{han2020deep,ruthotto2020machine,xuan2022optimal,onken2021neural}.
While these models can achieve high accuracy for a fixed environment, they must be retrained whenever system parameters or cost functions change. This limitation motivates the use of neural operators, which amortize computation across problem families.

\textbf{Neural Operators.} 
Neural operators are designed to learn mappings between infinite-dimensional function spaces. Prominent examples include Fourier Neural Operators (FNO) \cite{li2020fourier,li2024physics,kovachki2023neural}, Deep Operator Networks (DeepONet) \cite{lu2021learning,wang2021learning}, and transformer-based architectures \cite{li2022transformer,hao2023gnot}. Compared with classical numerical solvers, well-trained neural operators offer substantial speedups while maintaining acceptable accuracy. A growing body of work leverages these advantages in optimal control. The existing application areas can be categorized as follows. 

\begin{itemize}

    \item \textbf{Learning Dynamics.} Neural operators serve as flexible surrogates for system dynamics or unknown nonlinearities. \cite{chi2024neural} jointly perform system identification and controller learning, while others~\cite{wang2021fast,lundqvist2025residual} learn a mapping  $\mathcal{K}_\theta:\bu(t) \mapsto \bx(t)$ that satisfy the dynamics \eqref{eqn:dynamics} approximately and then solve $\min_\bu J_B(\bu,\mathcal{K}_\theta(\bu))$. Extensions also handle nonsmooth objectives via primal–dual optimization~\cite{song2024operator}.

    \item \textbf{Enforcing Optimality Conditions.}
Other studies integrate neural operators into frameworks derived from optimality systems.
\cite{yong2024deep} learn mappings from initial states and environment parameters to optimal controls via first-order adjoint constraints.
\cite{lee2025hamilton} approximate mappings from terminal cost functions to value functions in the Hamilton–Jacobi–Bellman (HJB) equation, from which optimal feedback controls are obtained by minimizing the Hamiltonian using the gradient of this value function. \cite{verma2024neural} study parameterized optimal control problems by learning value functions or policies over finite-dimensional parameter spaces using HJB-based losses or reinforcement learning.  In motion-planning settings, where the HJB equation reduces to the Eikonal PDE, neural operators are trained to map environment-dependent cost functions to the corresponding value functions~\cite{matada2024generalizable}.

    \item \textbf{Controller Synthesis.} Unlike the optimal control formulation \eqref{eqn:Loss},\eqref{eqn:dynamics}, another line of research applies neural operators directly to controller synthesis, particularly within the backstepping framework for boundary control of PDEs. Here the primary goal is \textit{stabilization}, achieved by designing a feedback law expressed through an integral kernel (gain function) that satisfies an auxiliary kernel PDE. Computing this kernel is often costly, especially when system parameters vary. Neural operators have therefore been proposed to approximate the mapping from PDE coefficients to the integral kernel \cite{bhan2023neural,krstic2024neural,lamarque2025adaptive}.

    \item \textbf{Mean-Field Games (MFG).} \cite{huang2024unsupervised} propose an operator-learning framework mapping initial and terminal distributions to population trajectories, enabling fast inference for new MFG instances with a fixed environment. Our focus, by contrast, lies on task-varying environments; extending to the distributional MFG setting is a promising direction for future work.
\end{itemize}

\section{Conclusion and Broader Impact}
We propose a self-supervised neural operator learning method that amortizes optimal control by learning a direct map from problem conditions to optimal control functions, enabling near-instant inference after training and delivering strong performance across diverse benchmarks. Alongside these gains, we also characterize the method’s limitations, the `price to pay' for accuracy, both theoretically and empirically: our main scaling-law theorem quantifies how sample and model complexity grow with intrinsic dimension and ease with smoothness, and our experiments show corresponding performance degradation as problem complexity increases. Taken together, our results sharpen the scope of neural operators: they are neither a universal solver nor a narrow surrogate, but a powerful tool whose promise and limitations can now be clearly delineated.

The proposed method admits natural extensions to more complex optimal control settings. While this paper focuses on unconstrained optimal control problems, the approach can accommodate state and control constraints by augmenting the training objective with appropriate penalty, barrier, or augmented-Lagrangian terms to promote feasibility along trajectories. A further extension is to stochastic optimal control problems, where environment- or control-induced randomness enters the dynamics; handling this rigorously will require dedicated treatment, which we leave for future work.


\section{Acknowledgment}
R. Lai's reserach is supported in part by NSF DMS-2401297. 

\appendix

\section{Notation Table}
We list all notations in Table~\ref{tab:notation}. 
\begin{table}[h]
    \centering
    \caption{Notation table}
    \label{tab:notation}
    \begin{tabular}{c|c}
    \hline
    Notation & Description  \\
    \hline
    \hline
    $\Omega$ and $D$ & Ambient space $\Omega = \mathbb{R}^D$ \\
    $\mX$ and $d$ & Initial state space, supported on a d-dimension manifold on initial state $\mX \subset \Omega$ \\
    $\bx_0 \in \mX$ & Initial state \\
    $T$ & Terminal time \\
    $\mM$ and $k$ &  barrier function space, supported on k-dimensional manifold $\mM \subset C(\Omega)$ \\
    $B \in \mM$ & barrier function \\
    $L(t, \bu, B)$ & Running cost function \\
    $M(\bx)$ & Terminal cost function \\
    $\bu^*(t; B, \bx_0)$ & Optimal control for \eqref{eqn:Loss} with initial state $\bx_0$ and barrier function $B$ \\
    $\bx^*(t)$ & Trajectory obtained by optimal control $\bu^*(t; B, \bx_0)$ \\
    $\mG^*(B, \bx_0)$ & Minimizer of the population loss coupled with the dynamics \eqref{eqn:amortize_poploss} \\
    $\mathscr{G}(R,\kappa,l,p,K)$ & A class of ReLU networks \\
    $\Gamma$ & Training set $\Gamma = \{(B^i,\bx_0^i)\}_{i=1}^n \in \mX\times \mM$ \\
    $\mG_\theta$ & Neural network approximation to operator $\mG$ \\
    $\bx_\theta(t)$ & Trajectory obtained by neural network $\mG_\theta(t, B, \bx_0)$ \\
    $\mG_{\hat{\theta}}$ & A ReLU network in class $\mathscr{G}(R,\kappa,l,p,K)$ \\
    $\mR(\mG)$ & Population loss \\
    $\mR_{\Gamma}(\mG)$ & Empirical loss over dataset $\Gamma$ \\
    $\mG_{\Gamma;\theta^*}$ & Empirical risk minimizer over dataset $\Gamma$ within the class $\mathscr{G}(R,\kappa,l,p,K)$  \\
    \hline
    \hline
    $n$ & Number of training samples \\
    $m$ & Number of testing samples \\
    $N_t$ & Number of uniform time discretization steps \\
    $N_b$ & Number of Gaussian bumps \\
    $B(\bx)= \sum_{j=1}^{N_b} h_j \mN(\bx; \bc_j, \sigma_j I)$ & Stationary Gaussian mixture barrier \\
    $B(t, \bx) = \sum_{j=1}^{N_b} h_j \mN(\bx; \bc_j(t), \sigma_j I)$ & Moving Gaussian mixture barrier \\
    $B(\bx) = \sum_{j=1}^{N_b} B_j(\bx; a^j, b^j, c^j, d^j)$ & Maze configuration function \\
    \hline
    \hline
    $l_1$ & Number of layer of the barrier function encoder \\
    $l_2$ & Number of intermediate layer  \\
    $l$ & Number of total layers \\
    $n_r$ & Number of neurons \\
    $l_r$ & Learning rate \\
    $B_s$ & Batch size \\
    $D_s$ & Decay step: number of steps before the learning rate decays. \\
    $D_r$ & Decay rate \\
    $N_{epoch}$ & Number of training epochs\\
    \hline
    \end{tabular}
\end{table}

\begin{table}
    \centering
    \caption{Hyperparameters table}
    \label{tab:hyper}
    \begin{tabular}{|c|c|c|c|c|c|c|}
    \hline
    & \begin{tabular}{@{}c@{}}Gaussian Bumps \\ (2D / 3D) \\ (Stationary / Moving)\end{tabular}  & Maze & Unicycle  \\
    \hline
    \hline
    Time discretization $N_t$ & 30 & 500 & 30 \\
    Number of training sample $n$ & 1600 & 4000 & 2000  \\
    \hline
    \hline
    $l_1$ & 2  & 4  & 4  \\
    $l_2$ &  2  & 4  & 2 \\
    $n_r$ & 200 & 300 & 300 \\
    $n_h$ & - & 2 & - \\
    Fourier feature $\sigma'$ & - & $1.5\pi$ & - \\
    \hline
    \hline
    Leaning rate $l_r$ & 1e-4 & 5e-5 & 1e-4 \\
     Decay step $D_s$ & 500 & 200 & 400 \\
    Decay rate $D_r$ & 0.95 & 0.9 & 0.9 \\
    Batch size $B_s$ & 32 & 20 & 32\\
    Epoch $N_{epoch}$ & 4000 & 4000 & 4000 \\
    \hline
    \end{tabular}
\end{table}

\section{Supplementary Details for Maze Navigation Problem}\label{sec:network_maze}

The generation of the maze configurations follows the randomized Depth-First Search (DFS) algorithm. The Randomized Depth-First Search (DFS) algorithm generates a maze by exploring cells in a loop until all are visited. Starting from a random cell, it marks it visited, randomly chooses an unvisited neighbor, removes the wall between them, and moves forward while pushing the current cell onto a stack. If no unvisited neighbors remain, it backtracks by popping from the stack, repeating this process until the grid is fully explored. The result is a perfect maze with a single path between any two cells. In addition, we define a loopiness coefficient $p_\zeta \in [0,1]$ is applied after generation: with probability $p_\zeta$, extra walls are removed even between already-connected regions, introducing loops. See Algorithm~\ref{alg:maze} in Appendix~\ref{sec:network_maze} for details.
Maze generation is carried out by the following Algorithm~\ref{alg:maze}. Once the maze configuration is obtained, it can be converted into a list of axis-aligned rectangles within the domain $[0,1]^2$, which define the wall structures. Consequently, each maze is represented as a collection of rectangles, where each rectangle is uniquely specified by the tuple $[a,b,c,d]$. Here $a,b$ denotes the coordinates of the lower-left corner, and $c,d$ denotes the coordinates of the upper-right corner. Moreover, we fix start state at $\bx_0=(1,1)$ and the target terminal state at $\bx_T=(0,0)$.

\begin{algorithm}[htp]
\caption{Maze Generation with DFS and Loops}
\label{alg:maze}
\begin{algorithmic}[1]
\Require width $w$, height $h$, start $(s_x,s_y)$, end $(e_x,e_y)$, loop factor $\lambda$
\Ensure Binary grid \texttt{maze} of size $h \times w$
\State Ensure $w,h$ are odd (increment by 1 if even).
\State Initialize \texttt{maze} $\gets 0_{h \times w}$, \texttt{visited} $\gets \textbf{false}_{h \times w}$.
\State Snap start $(s_x,s_y)$ and end $(e_x,e_y)$ to even coordinates.
\Function{Visit}{$x,y$}
  \State \texttt{visited[$y,x$]} $\gets$ \textbf{true}; \texttt{maze[$y,x$]} $\gets 1$
  \State Let $\mathcal{D} = \{(2,0),(-2,0),(0,2),(0,-2)\}$ \Comment{possible moves}
  \State Randomly shuffle $\mathcal{D}$
  \For{each $(\Delta x, \Delta y) \in \mathcal{D}$}
    \State $n_x \gets x+\Delta x$, $n_y \gets y+\Delta y$
    \If{$0 \leq n_x < w$ and $0 \leq n_y < h$ and not \texttt{visited[$n_y,n_x$]}}
        \State \texttt{maze[$y+\Delta y/2,\,x+\Delta x/2$]} $\gets 1$ \Comment{carve wall}
        \State \Call{Visit}{$n_x,n_y$}
    \EndIf
  \EndFor
\EndFunction
\State Call \Call{Visit}{$s_x,s_y$}
\State $N \gets \lfloor \tfrac{w}{2}\rfloor \cdot \lfloor \tfrac{h}{2}\rfloor$ \Comment{cell count as in code}
\For{$i = 1$ to $\lfloor \lambda N \rfloor$}
  \Repeat
    \State Sample random interior coordinate $(x,y)$
  \Until{maze[$y,x$] = 0 and (vertical or horizontal neighbors are both open)}
  \State \texttt{maze[$y,x$]} $\gets 1$ \Comment{add loop}
\EndFor
\State Ensure start and end positions are open.
\State \Return \texttt{maze}, $(s_x,s_y)$, $(e_x,e_y)$
\end{algorithmic}
\end{algorithm}

\subsection{Network Architecture}
To encode complex maze configurations, we adopt a more advanced architecture than the deep ReLU networks used in the Gaussian-bump and unicycle problems. Our design combines Fourier feature embeddings~\cite{tancik2020fourier} with self-attention blocks~\cite{vaswani2017attention}. The maze configuration, represented as a list of axis-aligned rectangles, is first passed through a ReLU MLP encoder with $l_1$ layers and $n_r$ hidden neurons, followed by self-attention layers without positional encodings with $n_h$ heads, and then summarized using a $n_h$-head attention-pooling layer to obtain a permutation-invariant maze embedding. Temporal information is encoded separately using Fourier features, which provide high-frequency components and thus enable the model to capture sharp turns needed to avoid walls. Typically, we consider following encoding
\[
\gamma(t) = \begin{bmatrix}
\cos(\mathbf{B}t) \\
\sin(\mathbf{B}t)
\end{bmatrix},
\]
where each entry in $\mathbf{B}$ is sampled from $\mN(0, \sigma')$. Since the start and end states are fixed, they are not used as inputs. Finally, the temporal features and maze embedding are concatenated and passed through a deep ReLU MLP with $l_2$ layers and $n_r$ hidden neurons to produce the final prediction. The selection of the hyperparameters is summarized in Table~\ref{tab:hyper}.

\section{Implementation of Unicycle with Model Predictive Control}\label{sec:MPC_details}
We use $w_1=1$, $w_2=400$, $w_3=200$ and $w_4 = 2$ in \eqref{eqn:free_time_loss}. And we use two separate networks to predict the optimal control $\bu^*(t)$ and the terminal time $T^*$: ControlNet and TerminalTimeNet, respectively. ControlNet constructs a 12-dimensional obstacle token (center, width, height, and the initial and target states), embeds it into a 32-dimensional representation, and encodes time using 64-dimensional Fourier features projected into another 32-dimensional query token. These tokens are processed by a 2-layer Transformer encoder with 4 heads. The transformed query, combined with the mean-pooled obstacle features, is passed through a small MLP, and the outputs are scaled via tanh to produce bounded steering and acceleration controls.

TerminalTimeNet builds a 12-dimensional feature vector for each obstacle (center, width, height, initial state, target state) and feeds it into a light MLP. The network consists of 2 fully connected layers with 32 hidden units and ReLU activations, producing a scalar terminal-time contribution for each obstacle. These contributions are averaged across all obstacles, and a softplus is applied to ensure a strictly positive predicted terminal time.

During training, we fix the static-wall potential
\[
\sum_{j=1}^{N_{b1}} B_j(\bx; a^j, b^j, c^j, d^j),
\]
where we use three walls ($N_{b_1} = 3$) with parameters $[(0.1, 0.25, 0.4, 0.43), (0.5, 0.7, 0.67, 0.7), (0.6, 0.63, 0.2, 0.3)]$. 
We generate 4000 training data, in each training sample, we randomly sample up to two Gaussian bumps ($N_{b_2} \leq 2$): 
\[
\sum_{l=1}^{N_{b2}} \mN_l(\bx; \bc^l, \sigma^l)
\]
where each bump center is drawn from $\mathbf{c}_j \sim U([0.25,0.8]^2)$, the standard deviation is fixed at $\sigma_j = 0.02$, and configurations overlapping with walls are discarded. The system dynamics are discretized uniformly with $N_t = 80$.

\bibliographystyle{plain}  
\bibliography{references}  

\end{document}